\def\vbar{\mathchoice{\vrule height6.3ptdepth-.5ptwidth.8pt\kern- .8pt}
{\vrule height6.3ptdepth-.5ptwidth.8pt\kern-.8pt} {\vrule
height4.1ptdepth-.35ptwidth.6pt\kern-.6pt} {\vrule
height3.1ptdepth-.25ptwidth.5pt\kern-.5pt}}
\def\<{\langle}
\def\>{\rangle}
\newtheorem{thm}{Theorem}[section]
\newtheorem{cor}[thm]{Corollary}
\newtheorem{pro}[thm]{Proposition}
\newtheorem{ex}[thm]{Example}
\theoremstyle{definition}
\newtheorem{defi}{Definition}[section]
\theoremstyle{remark}
\newtheorem{rmk}{Remark}[section]
\begin{document}
\title{(Co)homology of compatible associative algebras}
\author{
{ Taoufik  Chtioui$^{1}$\footnote { Corresponding author,  E-mail: chtioui.taoufik@yahoo.fr} ,
Apurba Das$^{2}$\footnote {Corresponding author,  E-mail:  apurbadas348@gmail.com}
~~and~ Sami Mabrouk$^{3}$\footnote { Corresponding author,  E-mail: mabrouksami00@yahoo.fr}
}\\
{\small 1.  University of Sfax, Faculty of Sciences Sfax,  BP
1171, 3038 Sfax, Tunisia} \\
{\small 2.  Department of Mathematics and Statistics, Indian Institute of Technology}\\
{\small Kanpur 208016, Uttar Pradesh, India }\\
{\small 3.~ University of Gafsa, Faculty of Sciences Gafsa, 2112 Gafsa, Tunisia}}
\date{}
\maketitle

\begin{abstract}
In this paper, we define and study (co)homology theories of a compatible associative algebra $A$. At first, we construct a new graded Lie algebra whose Maurer-Cartan elements are given by compatible associative structures. Then we define the cohomology of a compatible associative algebra $A$ and as applications, we study extensions, deformations and extensibility of finite order deformations of $A$. We end this paper by considering compatible presimplicial vector spaces and the homology of compatible associative algebras.
\end{abstract}

\noindent \textbf{Key words}: Compatible associative algebras, (Co)homology, Extensions, Deformations.

\noindent \textbf{Mathematics Subject Classification 2020}: 16E40, 16S80.


\numberwithin{equation}{section}

\section*{Introduction}
Two algebraic structures of the same kind in a linear category are said to be compatible if their sum also defines the same kind of algebraic structure. Compatible structures often appear in various fields of mathematics and mathematical physics. Among others, the notion compatible Lie algebras are closely related to linear deformations (in particular, deformations by Nijenhuis operators) of Lie algebras and classical Yang-Baxter equations \cite{nij-ric,golu3}. They also appeared in the study of principal chiral fields \cite{golu1}, loop algebras over Lie algebras \cite{golu2} and elliptic theta functions \cite{Odesskii1}. In the mathematical study of biHamiltonian mechanics, compatible Poisson structures first appeared in the work of Magri, Morosi and Schwarzbach \cite{mag-mor,kos}. There is a close connection between compatible Lie algebras and compatible Poisson structures via dualization \cite{bol}. In the same spirit, compatible associative algebras are introduced and widely studied \cite{Odesskii2,Odesskii3}. Note that a compatible associative algebra is a triple $A= (A, \mu_1, \mu_2)$ such that $(A, \mu_1)$, $(A, \mu_2)$ and $(A, \mu_1 + \mu_2)$ are all associative algebras. See Section \ref{section-ca} for more details. The relation between compatible associative algebras and associative Yang-Baxter equations, quiver representations, bialgebra theory are explored in \cite{mar,Odesskii2,Odesskii3,wu}. See \cite{dotsenko,stro} for study on compatible algebras from operadic points of view.

\medskip

Cohomology and homology are some invariants for algebraic structures that begins with the works of Hochschild, Harrison, Barr among others \cite{hoch,harr,barr}. In \cite{gers} Gerstenhaber developed the pioneer theory of formal deformations for associative algebras and subsequently generalized for Lie algebras by Nijenhuis and Richardson \cite{nij-ric}. Such deformations are governed by the cohomology of algebras. Later, Balavoine \cite{bala} generalized the results of Gerstenhaber and Nijenhuis-Richardson to algebras over any quadratic operads. On the other hand, the homology of algebras is useful to study K\"{a}hler differentials and differential forms on algebras. Recently, the authors in \cite{comp-lie} defined a cohomology theory for compatible algebras and study linear deformations of compatible Lie algebras. Their study relies on the construction of a so-called bidifferential graded Lie algebra whose Maurer-Cartan elements are compatible Lie algebra structures. This bidifferential graded Lie algebra is not far from the Nijenhuis-Richardson graded Lie algebra constructed in \cite{nij-ric} to study Lie algebras. Therefore, this bidifferential graded Lie algebra has some lack of information to study compatible Lie algebras which led the authors of \cite{comp-lie} not to study extensions of finite order deformations.

\medskip

Our first aim is to construct a graded Lie algebra suitable for compatible algebraic structures. In this paper, we mainly focus on compatible associative algebras. Compatible algebras over any quadratic operads will be treated elsewhere. Here we define a graded Lie algebra (using the Gerstenhaber bracket \cite{gers2}) whose Maurer-Cartan elements are given by compatible associative algebras. Then we construct the cohomology of a compatible associative algebra $(A, \mu_1, \mu_2)$ with coefficients in a suitable bimodule. We show that the cohomology with coefficients in itself can be seen as the cohomology induced by the corresponding Maurer-Cartan element. We note that our cohomology of compatible associative algebras is not a combination of cohomologies of $(A, \mu_1)$ and $(A, \mu_2)$. However, we observe that there is a morphism from the cohomology of compatible associative algebra $(A, \mu_1, \mu_2)$ to the cohomology of the associative algebra $(A, \mu_1 + \mu_2)$. As applications of cohomology, we study extensions and various types of deformations (e.g., linear, formal and finite order) of a compatible associative algebra. During the course, we introduce Nijenhuis operators that induce trivial linear deformations. We also show that the vanishing of the second cohomology group of a compatible associative algebra $A$ implies that $A$ is rigid. Moreover, the obstruction to extending a finite order deformation is always a $3$-cocycle. Thus, vanishing of the third cohomology group implies that any finite order deformations are extensible.

\medskip

In the next, we also introduce homology for compatible associative algebras with coefficients in a suitable bimodule. To do this, we first define a notion of compatible presimplicial module and show that a compatible presimplicial module induces a homology. Like cohomology, the homology of a compatible associative algebra $(A, \mu_1, \mu_2)$ is not a combination of the homologies of $(A, \mu_1)$ and $(A, \mu_2)$.

\medskip

The paper is organized as follows. In Section \ref{section:background}, we recall the (Hochschild) cohomology of associative algebras and the Gerstenhaber bracket. In the next section (Section \ref{section-ca}) we consider compatible associative algebras and compatible bimodules. We also construct the graded Lie algebra whose Maurer-Cartan elements are precisely compatible associative structures. In Section \ref{sec:cohomology}, we introduce the cohomology of a compatible associative algebra with coefficients in a compatible bimodule. Abelian extensions of compatible associative algebras are also treated. In Section \ref{section:deformation}, we study deformations of compatible associative algebras from cohomological points of view. Finally, compatible presimplicial modules and homology of compatible associative algebras are given in Section \ref{section:homology}. We end this paper by mentioning some problems of interest regarding (co)homology of compatible associative algebras.

\subsection*{Notations}
Given an associative algebra $(A, \mu)$, we use the notation $a \cdot b$ for the element $\mu (a, b),$ for $a,b \in A$. If $(M, l, r)$ is a bimodule over the associative algebra $A$, we use the same notation dot for left and right $A$-actions on $M$, i.e., we write $a \cdot m$ for the element $l(a,m)$ and $m \cdot a$ for the element $r (m,a)$. For a compatible associative algebra $(A, \mu_1, \mu_2)$, we write $a \cdot_1 b$ for $\mu_1 (a, b)$ and $a \cdot_2 b$ for $\mu_2 (a, b)$. Similar notations are used for compatible bimodules over a compatible associative algebra.

All vector spaces, (multi)linear maps and tensor products are over a field $\mathbb{K}$ of characteristic $0$. All vector spaces are finite-dimensional.

\section{Background} \label{section:background}
In this section, we recall the (Hochschild) cohomology theory of associative algebras and the Gerstenhaber bracket. Our main references are \cite{gers2,gers,loday-book}.

\subsection{Cohomology of associative algebras}

Let $(A, \mu)$ be an associative algebra, i.e., $A$ is a vector space and $\mu : A \otimes A \rightarrow A, (a, b) \mapsto a \cdot b$ is a bilinear map satisfying the associativity condition
\begin{align*}
( a \cdot b) \cdot c = a \cdot ( b \cdot c), ~\text{ for } a, b, c \in A.
\end{align*}

We may also denote an associative algebra simply by $A$ when the multiplication map is clear from the context.

\medskip

A bimodule over an associative algebra $A$ consists of a vector space $M$ together with bilinear maps (called left and right $A$-actions) $l : A \otimes M \rightarrow M, (a, m) \mapsto a \cdot m$ and $r : M \otimes A \rightarrow M, (m, a) \mapsto m \cdot a$ satisfying the following compatibilities
\begin{align*}
(a \cdot b ) \cdot m = a \cdot ( b \cdot m), \qquad ( a \cdot m ) \cdot b = a \cdot (m \cdot b) ~~~ \text{ and } ~~~ (m \cdot a) \cdot b = m \cdot ( a \cdot b),
\end{align*}
for $a, b \in A$ and $m \in M$. Note that we have used the same notation $\cdot$ for the multiplication on $A$ as well as left and right $A$-actions. It will be understood from the entries that which operation we mean. An $A$-bimodule as above may be simply denoted by $M$ when both left and right $A$-actions on $M$ are clear.

It follows that $A$ is an $A$-bimodule with both left and right $A$-actions are given by the algebra multiplication map.

Given an associative algebra $A$ and an $A$-bimodule $M$, the (Hochschild) cohomology groups can be defined as follows. For each $n \geq 0$, the $n$-th cochain group $C^n( A, M)$ is given by
\begin{align*}
C^n(A,M) : = \mathrm{Hom}(A^{\otimes n}, M),
\end{align*}
and the (Hochschild) coboundary map $\delta : C^n(A, M) \rightarrow C^{n+1} (A, M)$, for $n \geq 0$, is given by
\begin{align}\label{hoch-diff}
 (\delta f) (a_1, a_2, \ldots, a_{n+1}) =~& a_1 \cdot f(a_2, \ldots , a_{n+1}) + \sum_{i=1}^{n} (-1)^i f ( a_1, \ldots, a_{i-1}, a_i \cdot a_{i+1}, a_{i+2}, \ldots, a_{n+1} ) \\
~& + (-1)^{n+1} f (a_1, \ldots, a_n) \cdot a_{n+1}, \nonumber
\end{align}
for $f \in C^n(A, M)$ and $a_1, \ldots, a_{n+1} \in A$. The corresponding cohomology groups are called the Hochschild cohomology of $A$ with coefficients in the $A$-bimodule $M$.

\subsection{The Gerstenhaber bracket}\label{sec:GB}

Recall that, in \cite{gers2} Gerstenhaber construct a graded Lie algebra structure on the graded space of all multilinear maps on a vector space $A$. More precisely, he considered the graded space $C^{\ast + 1} (A, A) = C^\ast (A, A) [1] = \oplus_{n \geq 0} C^{n+1} (A, A)$ and defined a graded Lie bracket (called the Gerstenhaber bracket) on $C^{\ast +1 }(A,A)$ by
\begin{align}
[f, g]_G  =~& f \diamond g - (-1)^{mn}~ g \diamond f, ~~ \text{ where }  \label{gers-brk}\\
(f \diamond g)(a_1, \ldots, a_{m+n+1} ) =~&  \sum_{i = 1}^{m+1} (-1)^{(i-1)n}~f ( a_1, \ldots, a_{i-1}, g ( a_i, \ldots, a_{i+n}), \ldots, a_{m+n+1}), \nonumber
\end{align}
for $f \in C^{m+1} (A,A)$ and $g \in C^{n+1}(A,A)$. The importance of this graded Lie bracket is given by the following characterization of associative structures.

\begin{pro}
Let $A$ be a vector space and $\mu: A^{\otimes 2} \rightarrow A$ be a bilinear map on $A$. Then $\mu$ defines an associative structure on $A$ if and only if $[\mu,\mu]_G = 0$.
\end{pro}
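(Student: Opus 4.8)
The plan is to unwind the definition \eqref{gers-brk} of the Gerstenhaber bracket in the special case $f = g = \mu$ and check that $[\mu,\mu]_G$ equals, up to the nonzero scalar $2$, the associator of $\mu$; the claimed equivalence then drops out immediately because $\mathbb{K}$ has characteristic $0$.

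First I would record the relevant degrees: since $\mu \in C^2(A,A)$, in the notation of \eqref{gers-brk} we have $m = n = 1$, hence $(-1)^{mn} = -1$, and therefore
\begin{align*}
[\mu,\mu]_G = \mu \diamond \mu - (-1)^{1\cdot 1}\,\mu \diamond \mu = 2\,(\mu \diamond \mu).
\end{align*}
So it suffices to compute the single circle product $\mu \diamond \mu \in C^3(A,A)$.

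Next I would evaluate $\mu \diamond \mu$ on an arbitrary triple $a_1, a_2, a_3 \in A$. Since $m + 1 = 2$ and $m + n + 1 = 3$, the defining sum runs only over $i = 1, 2$, and using $(-1)^{(i-1)n} = (-1)^{i-1}$ one obtains
\begin{align*}
(\mu \diamond \mu)(a_1,a_2,a_3) = \mu\big(\mu(a_1,a_2),a_3\big) - \mu\big(a_1,\mu(a_2,a_3)\big) = (a_1 \cdot a_2)\cdot a_3 - a_1 \cdot (a_2 \cdot a_3).
\end{align*}
Combining the two displays gives $[\mu,\mu]_G(a_1,a_2,a_3) = 2\big((a_1\cdot a_2)\cdot a_3 - a_1\cdot(a_2\cdot a_3)\big)$ for all $a_1,a_2,a_3\in A$.

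Finally, because $2$ is invertible in $\mathbb{K}$, the identity $[\mu,\mu]_G = 0$ holds if and only if $(a_1\cdot a_2)\cdot a_3 = a_1\cdot(a_2\cdot a_3)$ for all $a_1,a_2,a_3 \in A$, i.e. if and only if $\mu$ is associative; this settles both implications simultaneously. There is essentially no serious obstacle in this argument — the only point demanding attention is the careful bookkeeping of the degree-dependent signs in the Gerstenhaber bracket and the circle product, which is exactly why I pin down $m = n = 1$ before substituting.
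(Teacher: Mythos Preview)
Your proof is correct; the paper states this proposition without proof (it is quoted as a well-known characterization due to Gerstenhaber), and your direct computation of $[\mu,\mu]_G = 2(\mu\diamond\mu)$ as twice the associator is exactly the standard verification one would supply.
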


Let $(A,\mu)$ be an associative algebra. Then it follows from (\ref{hoch-diff}) and (\ref{gers-brk}) that the Hochschild coboundary map $\delta : C^n (A,A) \rightarrow C^{n+1} (A,A)$ for the cohomology of $A$ with coefficients in itself is given by
\begin{align}\label{hoch-diff-brk}
\delta f = (-1)^{n-1} [\mu, f]_G, ~\text{ for } f \in C^n(A,A).
\end{align}

\section{Compatible associative algebras and their characterization}\label{section-ca}

In this section, we first recall compatible associative algebras and then define compatible bimodules over them. We also construct a graded Lie algebra whose Maurer-Cartan elements are compatible associative structures.

\subsection{Compatible associative algebras} In this subsection, we consider compatible associative algebras and compatible bimodules over them.

\begin{defi}
A compatible associative algebra is a triple $(A, \mu_1, \mu_2)$ in which $(A, \mu_1)$ and $(A, \mu_2)$ are both associative algebras satisfying the following compatibility
\begin{align*}
(a \cdot_1 b ) \cdot_2 c + ( a \cdot_2 b) \cdot_1 c = a \cdot_1 ( b \cdot_2 c ) + a \cdot_2 ( b \cdot_1 c ), ~ \text{ for } a, b, c \in A.
\end{align*}
Here $\cdot_1$ and $\cdot_2$ are used for the multiplications $\mu_1$ and $\mu_2$, respectively.
\end{defi}

We may denote a compatible associative algebra as above by $(A, \mu_1, \mu_2)$ or simply by $A$, and say that $(\mu_1, \mu_2)$ is a compatible associative algebra structure on $A$.

\begin{rmk}
It follows from the above definition that the sum $\mu_1 + \mu_2$ also defines an associative product on $A$. In other words, $(A, \mu_1 + \mu_2)$ is an associative algebra. In fact, one can show that $(A , k \mu_1 + l \mu_2)$ is an associative algebra, for any $k, l \in \mathbb{K}.$
\end{rmk}

\begin{defi}
Let $A = (A, \mu_1, \mu_2)$ and $A' = (A', \mu_1', \mu_2')$ be two compatible associative algebras. A morphism of compatible associative algebras from $A$ to $A'$ is a linear map $\phi : A \rightarrow A'$ satisfying $\phi \circ \mu_1 = \mu_1' \circ (\phi \otimes \phi)$ and $\phi \circ \mu_2 = \mu_2' \circ (\phi \otimes \phi)$.
\end{defi}

\begin{pro}
Let $A$ be a vector space. Then a pair $(\mu_1, \mu_2)$ of bilinear maps on $A$ defines a compatible associative algebra structure on $A$ if and only if
\begin{align*}
[\mu_1, \mu_1]_G = 0, \qquad [\mu_2, \mu_2]_G = 0 ~~~~ \text{ and } ~~~~ [\mu_1, \mu_2]_G = 0.
\end{align*}
\end{pro}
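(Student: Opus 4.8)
The plan is to translate each of the three conditions in the definition of a compatible associative algebra into the vanishing of a Gerstenhaber bracket, using the already-recorded fact that a bilinear map $\mu$ is associative if and only if $[\mu,\mu]_G = 0$. The first two equivalences, $[\mu_1,\mu_1]_G = 0 \Leftrightarrow (A,\mu_1)$ associative and $[\mu_2,\mu_2]_G = 0 \Leftrightarrow (A,\mu_2)$ associative, are immediate instances of that proposition, so no work is needed there. The content of the statement is therefore concentrated in showing that, given that $(A,\mu_1)$ and $(A,\mu_2)$ are associative, the remaining compatibility identity
\begin{align*}
(a \cdot_1 b ) \cdot_2 c + ( a \cdot_2 b) \cdot_1 c = a \cdot_1 ( b \cdot_2 c ) + a \cdot_2 ( b \cdot_1 c )
\end{align*}
is equivalent to $[\mu_1,\mu_2]_G = 0$.

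For this, I would simply expand $[\mu_1,\mu_2]_G$ from the definition \eqref{gers-brk} with $m = n = 1$. Since $\mu_1, \mu_2 \in C^2(A,A)$, the sign $(-1)^{mn} = (-1)^{1}= -1$, so $[\mu_1,\mu_2]_G = \mu_1 \diamond \mu_2 + \mu_2 \diamond \mu_1$. Writing out $(\mu_1 \diamond \mu_2)(a,b,c)$, the index $i$ runs over $1,2$ with signs $(-1)^{(i-1)n} = (-1)^{i-1}$, giving $\mu_1(\mu_2(a,b),c) - \mu_1(a,\mu_2(b,c)) = (a\cdot_2 b)\cdot_1 c - a \cdot_1 (b \cdot_2 c)$; similarly $(\mu_2 \diamond \mu_1)(a,b,c) = (a\cdot_1 b)\cdot_2 c - a\cdot_2 (b\cdot_1 c)$. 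Adding these, $[\mu_1,\mu_2]_G(a,b,c) = \big((a\cdot_1 b)\cdot_2 c + (a\cdot_2 b)\cdot_1 c\big) - \big(a\cdot_1(b\cdot_2 c) + a\cdot_2(b\cdot_1 c)\big)$, which vanishes for all $a,b,c$ precisely when the compatibility identity holds. This is a short direct computation with careful bookkeeping of the Gerstenhaber signs; there is no genuine obstacle, only the routine risk of a sign slip. (One may also note the symmetry $[\mu_1,\mu_2]_G = [\mu_2,\mu_1]_G$ since the bracket is graded-symmetric on odd-degree elements, which is a useful consistency check.)

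I would then assemble the three equivalences into the statement. For the forward direction, if $(\mu_1,\mu_2)$ is a compatible associative structure then all three defining conditions hold, hence all three brackets vanish by the above. For the converse, if the three brackets vanish, then $(A,\mu_1)$ and $(A,\mu_2)$ are associative by the cited proposition, and the vanishing of $[\mu_1,\mu_2]_G$ gives the compatibility identity, so $(A,\mu_1,\mu_2)$ is a compatible associative algebra. As a remark, one can observe that $[\mu_1+\mu_2,\mu_1+\mu_2]_G = [\mu_1,\mu_1]_G + 2[\mu_1,\mu_2]_G + [\mu_2,\mu_2]_G$ by bilinearity and graded-symmetry of the bracket, which recovers the fact noted earlier that $(A,\mu_1+\mu_2)$ is associative; this also anticipates the Maurer–Cartan interpretation used later in the paper.
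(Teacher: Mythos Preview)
Your argument is correct. The paper actually states this proposition without proof, treating it as an immediate consequence of the Gerstenhaber-bracket characterization of associativity; your direct expansion of $[\mu_1,\mu_2]_G$ is exactly the routine computation needed to fill in that omitted step, and the signs are handled correctly.
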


\begin{ex}
Let $(A, \mu)$ be an associative algebra. A Nijenhuis operator on $A$ is a linear map $N : A \rightarrow A$ satisfying
\begin{align*}
N(a) \cdot N(b) = N \big( N(a) \cdot b + a \cdot N(b) - N (a \cdot b) \big), \text{ for } a, b \in A.
\end{align*}
A Nijenhuis operator $N$ induces a new associative multiplication on $A$, denoted by $\mu_N : A \otimes A \rightarrow A, (a,b) \mapsto a \cdot_N b$ and it is defined by
\begin{align*}
a \cdot_N b := N(a) \cdot b + a \cdot N(b) - N (a \cdot b), ~ \text{ for } a, b \in A.
\end{align*}
Then it is easy to see that $(A, \mu, \mu_N)$ is a compatible associative algebra.
\end{ex}

\begin{ex}
Let $(A, \mu)$ be an associative algebra. Then the graded space $C^\ast (A,A) = \oplus_{n \geq 1} C^n (A, A)$ of Hochschild cochains of $A$ (with coefficients in itself) carries an associative cup-product \cite{gers2} given by
\begin{align*}
(f \smile_\mu g )(a_1, \ldots, a_{m+n}) = f ( a_1, \ldots, a_m) \cdot g ( a_{m+1}, \ldots, a_{m+n}),
\end{align*}
for $f \in C^m (A,A)$ and $g \in C^n(A,A)$. It is easy to see that if $(A, \mu_1, \mu_2)$ is a compatible associative algebra, then $(C^\ast (A,A), \smile_{\mu_1}, \smile_{\mu_2})$ is a compatible associative algebra.
\end{ex}

\begin{ex}
Let $(A, \mu)$ be an associative algebra and $M$ be an $A$-bimodule. If $f \in C^2 (A, M)$ is a Hochschild $2$-cocycle on $A$ with coefficients in the $A$-bimodule $M$, then $A \oplus M$ can be equipped with the $f$-twisted semidirect product associative algebra given by
\begin{align*}
(a, m) \cdot_f (b, n) = ( a \cdot b, a \cdot n + m \cdot b + f (a, b)), ~ \text{ for } (a, m), (b,n) \in A \oplus M.
\end{align*}
With this notation, it can be easily checked that  $(A \oplus M, \cdot_0 , \cdot_f)$ is a compatible associative algebra.
\end{ex}

The next class of examples come from compatible Rota-Baxter operators on associative algebras. See \cite{guo-book} for more on Rota-Baxter operators.

\begin{defi}
Let $(A, \mu)$ be an associative algebra. A Rota-Baxter operator on $A$ is a linear map $R : A \rightarrow A$ satisfying
\begin{align*}
R(a) \cdot R(b) = R \big( R(a) \cdot b + a \cdot R(b) \big), ~ \text{ for } a, b \in A.
\end{align*}
\end{defi}

A Rota-Baxter operator $R$ induces a new associative multiplication $\mu_R : A \otimes A \rightarrow A,~(a,b) \mapsto a \cdot_R b$ on the underlying vector space $A$ and it is given by
\begin{align*}
a \cdot_R b = R(a) \cdot b + a \cdot R(b), ~ \text{ for } a, b \in A.
\end{align*}

\begin{defi}
Two Rota-Baxter operators $R, S : A \rightarrow A$ on an associative algebra $A$ are said to be compatible if for any $k,l \in \mathbb{K}$, the sum $k R + l S : A \rightarrow A$ is a Rota-Baxter operator on $A$. Equivalently,
\begin{align*}
R(a) \cdot S (b) + S(a) \cdot R (b) =  R \big( S(a) \cdot b + a \cdot S(b) \big) + S \big( R(a) \cdot b + a \cdot R(b) \big), ~ \text{ for } a, b \in A.
\end{align*}
\end{defi}

The following result is straightforward.

\begin{pro}
Let $R, S : A \rightarrow A$ be two compatible Rota-Baxter operators on $A$. Then $(A, \cdot_R )$ and $(A, \cdot_S)$ are compatible associative algebra structures on $A$.
\end{pro}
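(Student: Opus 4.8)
The plan is to deduce everything from the Gerstenhaber-bracket characterization of compatible associative structures proved above, together with the linearity of the construction $R \mapsto \mu_R$ in the operator. First I would record the elementary observation that for scalars $k, l \in \mathbb{K}$ one has $\mu_{kR+lS} = k\mu_R + l\mu_S$; this is immediate from the defining formula $a \cdot_R b = R(a) \cdot b + a \cdot R(b)$, which is manifestly linear in $R$.

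Next, since $R$ and $S$ are compatible Rota-Baxter operators, $kR + lS$ is again a Rota-Baxter operator for every $k, l \in \mathbb{K}$, so the induced product $\mu_{kR+lS} = k\mu_R + l\mu_S$ is associative. By the proposition characterizing associativity via the Gerstenhaber bracket, this says $[\,k\mu_R + l\mu_S,\ k\mu_R + l\mu_S\,]_G = 0$ for all $k, l$. Since $\mu_R, \mu_S \in C^2(A,A) = C^{1+1}(A,A)$ sit in degree $1$ of the shifted complex, the bracket is symmetric on them ($[f,g]_G = f \diamond g + g \diamond f = [g,f]_G$ for such $f,g$), so by bilinearity the above expands to $k^2 [\mu_R,\mu_R]_G + 2kl\,[\mu_R,\mu_S]_G + l^2 [\mu_S,\mu_S]_G = 0$. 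Taking $k = l = 1$ and using that the single Rota-Baxter operators $R$ and $S$ already make $(A,\cdot_R)$ and $(A,\cdot_S)$ associative, i.e. $[\mu_R,\mu_R]_G = 0 = [\mu_S,\mu_S]_G$, together with $\mathrm{char}\,\mathbb{K} = 0$, I conclude $[\mu_R,\mu_S]_G = 0$.

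Finally, the three identities $[\mu_R,\mu_R]_G = 0$, $[\mu_S,\mu_S]_G = 0$ and $[\mu_R,\mu_S]_G = 0$ are precisely the hypotheses of the proposition characterizing compatible associative algebra structures, whence $(A, \mu_R, \mu_S)$ is a compatible associative algebra, as claimed.

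There is no serious obstacle here; the only point requiring a moment's care is the polarization step, namely that the cross term genuinely isolates $[\mu_R,\mu_S]_G$ — which is exactly what the symmetry of the Gerstenhaber bracket in this degree and characteristic zero provide. One could instead avoid the bracket formalism entirely and verify the compatibility identity $(a \cdot_R b) \cdot_S c + (a \cdot_S b) \cdot_R c = a \cdot_R (b \cdot_S c) + a \cdot_S (b \cdot_R c)$ directly, expanding every term through the definitions of $\cdot_R$ and $\cdot_S$ and repeatedly applying associativity of $\mu$ and the compatible Rota-Baxter relation; this is routine but considerably longer, so I would keep it only as a fallback should the bracket argument be judged too terse.
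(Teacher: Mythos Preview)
Your proof is correct. The paper itself supplies no argument beyond declaring the result ``straightforward'', so there is no detailed approach to compare against; your route via the linearity $\mu_{kR+lS} = k\mu_R + l\mu_S$ and the Gerstenhaber-bracket characterization of compatible associative structures is precisely the clean verification the paper's framework invites, and the polarization step is handled correctly.
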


\medskip

\begin{defi}
Let $A = (A, \mu_1, \mu_2)$ be a compatible associative algebra. A compatible $A$-bimodule consists of a quintuple $(M, l_1, r_1, l_2, r_2)$ in which $M$ is a vector space and
\begin{align*}
\begin{cases}
l_1 : A \otimes M \rightarrow M,~ (a,m) \mapsto a \cdot_1 m, \\
r_1 : M \otimes A \rightarrow M,~ (m,a) \mapsto m \cdot_1 a,
\end{cases}
~~~~
\begin{cases}
l_2 : A \otimes M \rightarrow M,~ (a,m) \mapsto a \cdot_2 m, \\
r_2 : M \otimes A \rightarrow M,~ (m,a) \mapsto m \cdot_2 a,
\end{cases}
\end{align*}
are bilinear maps such that

-~ $(M, l_1, r_1)$ is a bimodule over the associative algebra $(A, \mu_1)$;

-~ $(M, l_2, r_2)$ is a bimodule over the associative algebra $(A, \mu_2)$;

-~ the following compatibilities are hold: for $a, b \in A$ and $m \in M$,
\begin{align}
( a \cdot_1 b ) \cdot_2 m  + ( a \cdot_2 b ) \cdot_1 m  = a \cdot_1 ( b \cdot_2 m) + a \cdot_2 ( b \cdot_1 m), \label{comp-bi1}\\
( a \cdot_1 m ) \cdot_2 b  + ( a \cdot_2 m ) \cdot_1 b  = a \cdot_1 ( m \cdot_2 b) + a \cdot_2 ( m \cdot_1 b), \label{comp-bi2}\\
( m \cdot_1 a ) \cdot_2 b  + ( m \cdot_2 a ) \cdot_1 b  = m \cdot_1 ( a \cdot_2 b) + m \cdot_2 ( a \cdot_1 b). \label{comp-bi3}
\end{align}
\end{defi}

A compatible $A$-bimodule as above may be simply denoted by $M$ when no confusion arises.

\begin{ex}
Any compatible associative algebra $A = (A, \mu_1, \mu_2)$ is a compatible $A$-bimodule in which $l_1 = r_1 = \mu_1$ and $l_2 = r_2 = \mu_2$.
\end{ex}

\begin{rmk}
Let $A = (A, \mu_1, \mu_2)$ be a compatible associative algebra and $(M, l_1, r_1, l_2, r_2)$ be a compatible $A$-bimodule. Then it is easy to see that $(M, l_1 + l_2, r_1 + r_2)$ is a bimodule over the associative algebra $(A, \mu_1 + \mu_2).$
\end{rmk}

Given an associative algebra and a bimodule, one can construct the dual bimodule \cite{loday-book}. This can be generalized to the case of compatible associative algebras.

\begin{pro}
Let $A$ be a compatible associative algebra and $M$ be a compatible $A$-bimodule. Then the dual space $M^*$ also carries a compatible $A$-bimodule structure given by
\begin{align*}
\begin{cases}
( a \cdot_1 \alpha) (m) = \alpha ( m \cdot_1 a), \\
(\alpha \cdot_1 a) (m) = \alpha ( a \cdot_1 m),
\end{cases}
~~~~
\begin{cases}
( a \cdot_2 \alpha) (m) = \alpha ( m \cdot_2 a), \\
(\alpha \cdot_2 a) (m) = \alpha ( a \cdot_2 m), \text{ for } a \in A, \alpha \in M^*, m \in M.
\end{cases}
\end{align*}
\end{pro}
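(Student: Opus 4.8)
The plan is to check the axioms defining a compatible $A$-bimodule for the quintuple $(M^*, l_1^*, r_1^*, l_2^*, r_2^*)$, where $l_1^*, r_1^*, l_2^*, r_2^*$ denote the four bilinear maps on $M^*$ given by the displayed formulas. The first two requirements --- that $(M^*, l_1^*, r_1^*)$ is a bimodule over $(A, \mu_1)$ and that $(M^*, l_2^*, r_2^*)$ is a bimodule over $(A, \mu_2)$ --- are exactly the classical statements about dual bimodules applied to the associative algebras $(A, \mu_1)$, $(A, \mu_2)$ together with the bimodules $(M, l_1, r_1)$, $(M, l_2, r_2)$, respectively; hence they follow directly from \cite{loday-book} with no new computation. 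So the real content of the proposition is the verification of the three compatibility identities $(\ref{comp-bi1})$, $(\ref{comp-bi2})$, $(\ref{comp-bi3})$ for $M^*$.

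For each of these three identities I would evaluate both sides on an arbitrary $m \in M$, expand using the definitions of the dual actions, and recognise the outcome as one of the compatibility identities for $M$ sitting inside $\alpha$. For instance, identity $(\ref{comp-bi1})$ for $M^*$ (i.e.\ with the module element an $\alpha \in M^*$), evaluated at $m$, reads $\alpha\big( m \cdot_2 (a \cdot_1 b) + m \cdot_1 (a \cdot_2 b) \big) = \alpha\big( (m \cdot_1 a) \cdot_2 b + (m \cdot_2 a) \cdot_1 b \big)$, which holds because $(\ref{comp-bi3})$ is satisfied by $M$. By the same mechanism, $(\ref{comp-bi3})$ for $M^*$ follows from $(\ref{comp-bi1})$ for $M$, while $(\ref{comp-bi2})$ for $M^*$ reduces to $(\ref{comp-bi2})$ for $M$ after relabelling $a \leftrightarrow b$. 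Thus passing to the dual interchanges the first and third compatibility conditions and fixes the middle one up to the symmetry $a \leftrightarrow b$.

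The only point demanding attention --- and it is bookkeeping, not a genuine difficulty --- is that each application of a dual action converts a left action into a right action (and conversely) and reverses the position of the algebra element relative to the module element, so one must be careful about which of $(\ref{comp-bi1})$--$(\ref{comp-bi3})$ for $M$ is being invoked at each step, and in which order the factors appear. Once this dictionary is in place, every case is a two-line computation; I would write out one of them in detail (say $(\ref{comp-bi1})$) and leave the remaining two as completely analogous.
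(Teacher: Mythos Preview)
Your proposal is correct and follows essentially the same approach as the paper: the paper likewise remarks that only the three compatibility conditions need checking, writes out the verification of $(\ref{comp-bi1})$ for $M^*$ by evaluating at $m$ and reducing it to $(\ref{comp-bi3})$ for $M$, and then declares the other two cases similar. Your additional observation about exactly which identity for $M$ each identity for $M^*$ reduces to (the swap $(\ref{comp-bi1}) \leftrightarrow (\ref{comp-bi3})$ and the self-correspondence of $(\ref{comp-bi2})$ up to $a \leftrightarrow b$) is a nice refinement but not a different method.
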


\begin{proof}
We only need to check the compatibility conditions (\ref{comp-bi1})-(\ref{comp-bi3}). For $a, b \in A$, $\alpha \in M^*$ and $m \in M$, we have
\begin{align*}
\big( (a \cdot_1 b) \cdot_2 \alpha + ( a \cdot_2 b ) \cdot_1 \alpha   \big) (m) =~& \alpha \big( m \cdot_2 ( a \cdot_1 b) + m \cdot_1 ( a \cdot_2 b ) \big) \\
=~& \alpha \big(  (m \cdot_1 a) \cdot_2 b + (m \cdot_2 a) \cdot_1 b \big) \\
=~& (b \cdot_2 \alpha ) (m \cdot_1 a ) + (b \cdot_1 \alpha) (m \cdot_2 a) \\
=~& \big( a \cdot_1 ( b \cdot_2 \alpha ) + a \cdot_2 ( b \cdot_1 \alpha) \big) (m).
\end{align*}
Hence, we have $(a \cdot_1 b) \cdot_2 \alpha + ( a \cdot_2 b ) \cdot_1 \alpha   = a \cdot_1 ( b \cdot_2 \alpha ) + a \cdot_2 ( b \cdot_1 \alpha)$. This verifies the condition (\ref{comp-bi1}). The verifications of (\ref{comp-bi2}) and (\ref{comp-bi3}) are similar. Hence the proof.
\end{proof}

The following result generalizes the semidirect product for associative algebras \cite{loday-book}.

\begin{pro}\label{semi-prop}
Let $A$ be a compatible associative algebra and $M$ be a compatible $A$-bimodule. Then the direct sum $A \oplus M$ carries a compatible associative algebra structure given by
\begin{align*}
(a,m) \cdot_1 (b, n) = (a \cdot_1 b,~ a \cdot_1 n + m \cdot_1 b),\\
 (a,m) \cdot_2 (b, n) = (a \cdot_2 b,~ a \cdot_2 n + m \cdot_2 b),
\end{align*}
for $(a,m), (b,n) \in A \oplus M$. This is called the semidirect product.
\end{pro}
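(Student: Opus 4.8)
The plan is to verify, for the two operations $\cdot_1$ and $\cdot_2$ defined on $A \oplus M$, the three identities that characterise a compatible associative algebra: associativity of $\cdot_1$, associativity of $\cdot_2$, and the compatibility relation linking them. All three will reduce either to a hypothesis on $A = (A, \mu_1, \mu_2)$ or to an axiom of the compatible bimodule $M$.

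First I would dispense with the associativity of each operation separately. The operation $\cdot_1$ on $A \oplus M$ is nothing but the classical semidirect product of the associative algebra $(A, \mu_1)$ with its bimodule $(M, l_1, r_1)$: expanding $\big((x \cdot_1 y) \cdot_1 z\big)$ and $\big(x \cdot_1 (y \cdot_1 z)\big)$ for $x = (a,m)$, $y = (b,n)$, $z = (c,p)$, the $A$-component reproduces the associativity of $\mu_1$ and the $M$-component reproduces the three bimodule axioms of $(M, l_1, r_1)$ over $(A, \mu_1)$. Since all of these hold by hypothesis, $(A \oplus M, \cdot_1)$ is associative, and the identical argument with every index $1$ replaced by $2$ shows that $(A \oplus M, \cdot_2)$ is associative as well.

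It then remains to check the compatibility identity $\big((x \cdot_1 y) \cdot_2 z\big) + \big((x \cdot_2 y) \cdot_1 z\big) = \big(x \cdot_1 (y \cdot_2 z)\big) + \big(x \cdot_2 (y \cdot_1 z)\big)$ for $x = (a,m)$, $y = (b,n)$, $z = (c,p)$. I would expand both sides and separate the $A$-component from the $M$-component. The $A$-component is precisely the compatibility relation for $(A, \mu_1, \mu_2)$, so it holds. The $M$-component produces six terms on each side, and the decisive observation is that they sort into three mutually independent groups according to which of the module elements $p$, $n$, $m$ occurs: the terms involving $p$ recombine into $(a \cdot_1 b) \cdot_2 p + (a \cdot_2 b) \cdot_1 p = a \cdot_1 (b \cdot_2 p) + a \cdot_2 (b \cdot_1 p)$, which is exactly (\ref{comp-bi1}); the terms involving $n$ give (\ref{comp-bi2}) (with $b$ replaced by $c$ and $m$ by $n$); and the terms involving $m$ give (\ref{comp-bi3}) (with $a, b$ replaced by $b, c$). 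Adding these three compatible-bimodule identities yields the $M$-component of the required relation, completing the proof. There is no genuine obstacle here: all the content lies in recognising that the $M$-component of the compatibility equation splits along precisely the axioms (\ref{comp-bi1})--(\ref{comp-bi3}), and the rest is routine bookkeeping of the same flavour as in the proof of the dual-bimodule proposition above.
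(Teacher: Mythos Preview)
Your proof is correct. The paper actually states this proposition without proof, treating it as a routine extension of the classical semidirect product construction. Your verification is exactly the natural one: associativity of each $\cdot_i$ is the usual semidirect-product computation, and the compatibility relation on $A \oplus M$ decomposes, in its $M$-component, into precisely the three compatible-bimodule axioms (\ref{comp-bi1})--(\ref{comp-bi3}) according to which of $p$, $n$, $m$ appears. There is nothing to add.
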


\subsection{A new graded Lie algebra and characterization of compatible associative algebras}

Let $(\mathfrak{g} = \oplus_n \mathfrak{g}^n, [~,~])$ be a graded Lie algebra. An element $\theta \in \mathfrak{g}^1$ is said to be a Maurer-Cartan element of $\mathfrak{g}$ if $\theta$ satisfies
\begin{align*}
[\theta, \theta] = 0.
\end{align*}

\begin{rmk}\label{mc-rem}
\begin{itemize}
\item[(i)] A Maurer-Cartan element $\theta$ induces a degree $1$ coboundary map $d_\theta : =[ \theta , - ]$ on $\mathfrak{g}$. Infact, the differential $d_\theta$ makes the triple $(\mathfrak{g}, [~,~], d_\theta)$ into a differential graded Lie algebra.

\item[(ii)] Let $\theta$ be a Maurer-Cartan element.  For any $\theta' \in \mathfrak{g}^1$, the sum $\theta + \theta'$ is a Maurer-Cartan element of $\mathfrak{g}$ if and only if $\theta'$ satisfies
\begin{align*}
d_\theta (\theta') + \frac{1}{2} [\theta', \theta' ] = 0.
\end{align*}
\end{itemize}
\end{rmk}

\begin{defi}
Two Maurer-Cartan elements $\theta_1$ and $\theta_2$ are said to be compatible if they additionally satisfy  $[\theta_1, \theta_2] = 0$. In this case, we say that $(\theta_1, \theta_2)$ is a compatible pair of Maurer-Cartan elements of $\mathfrak{g}$.
\end{defi}

In the rest of this section, we assume that the underlying graded vector space $\mathfrak{g}$ is non-negatively graded, i.e., $\mathfrak{g} = \oplus_{n \geq 0} \mathfrak{g}^n$. We construct a new graded Lie algebra $\mathfrak{g}_c$ whose Maurer-Cartan elements are precisely compatible pair of Maurer-Cartan elements of $\mathfrak{g}.$

We define $\mathfrak{g}_c = \oplus_{n \geq 0} (\mathfrak{g}_c)^n$, where
\begin{align*}
(\mathfrak{g}_c)^0 = \mathfrak{g}^0 ~~~~ \text{ and } ~~~~ (\mathfrak{g}_c)^n = \underbrace{\mathfrak{g}^n \oplus \cdots \oplus \mathfrak{g}^n}_{(n+1) \text{ times}}, ~\text{ for } n \geq 1.
\end{align*}
Let $\llbracket ~,~ \rrbracket : (\mathfrak{g}_c)^m \times (\mathfrak{g}_c)^n \rightarrow (\mathfrak{g}_c)^{m+n}$, for $m,n \geq 0$, be the degree $0$ bracket defined by
\begin{align}\label{br}
\llbracket &(f_1, \ldots, f_{m+1}), (g_1, \ldots, g_{n+1}) \rrbracket :=  \\
&= \big( [f_1, g_1],~ [f_1, g_2] + [f_2, g_1],~ \ldots, \underbrace{[f_1, g_i] + [f_2, g_{i-1}] + \cdots + [f_i, g_1]}_{i\text{-th place}},~ \ldots, [f_{m+1}, g_{n+1}] \big), \nonumber
\end{align}
for $(f_1, \ldots, f_{m+1}) \in (\mathfrak{g}_c)^m$ and $(g_1, \ldots, g_{n+1}) \in (\mathfrak{g}_c)^n.$

\begin{pro}
\begin{itemize}
\item[(i)] With the above notations, $(\mathfrak{g}_c, \llbracket ~, ~ \rrbracket)$ is a graded Lie algebra. Moreover, the map $\phi : \mathfrak{g}_c \rightarrow \mathfrak{g}$ defined by
\begin{align*}
\phi (f ) =~& f, ~ \text{ for } f \in (\mathfrak{g}_c)^0 = \mathfrak{g}^0,\\
\phi ((f_1, \ldots, f_{n+1})) =~& f_1 + \cdots + f_{n+1},~ \text{ for } (f_1, \ldots, f_{n+1}) \in (\mathfrak{g}_c)^n
\end{align*}
is a morphism of graded Lie algebras.
\item[(ii)] A pair $(\theta_1, \theta_2)$ of elements of $\mathfrak{g}^1$ is a compatible pair of Maurer-Cartan elements of $\mathfrak{g}$ if and only if $(\theta_1, \theta_2) \in (\mathfrak{g}_c)^1 = \mathfrak{g}^1 \oplus \mathfrak{g}^1$ is a Maurer-Cartan element in the graded Lie algebra $(\mathfrak{g}_c, \llbracket ~, ~ \rrbracket).$
\end{itemize}
\end{pro}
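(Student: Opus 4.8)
The plan is to verify directly that $\llbracket~,~\rrbracket$ is graded skew-symmetric and satisfies the graded Jacobi identity, then check that $\phi$ is a morphism, and finally unwind the Maurer-Cartan condition in $\mathfrak{g}_c$. For skew-symmetry, note that all of $\mathfrak{g}_c$ sits in non-negative degrees but the bracket has degree $0$ as a map $(\mathfrak{g}_c)^m \times (\mathfrak{g}_c)^n \to (\mathfrak{g}_c)^{m+n}$, while the \emph{internal} degrees of the components are $m$ and $n$; so the relevant sign in $\llbracket x, y\rrbracket = -(-1)^{mn}\llbracket y, x\rrbracket$ comes from the degrees in $\mathfrak{g}$, not from the shifted degrees in $\mathfrak{g}_c$. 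Concretely, the $i$-th component of $\llbracket (f_\bullet),(g_\bullet)\rrbracket$ is $\sum_{p+q=i+1}[f_p,g_q]$, and applying graded skew-symmetry of $[~,~]$ termwise together with reindexing $p\leftrightarrow q$ gives the claim.

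For the Jacobi identity, I would fix three homogeneous elements $F=(f_1,\dots,f_{m+1})$, $G=(g_1,\dots,g_{n+1})$, $H=(h_1,\dots,h_{p+1})$ of internal degrees $m,n,p$ and compare the $i$-th components of $\llbracket F,\llbracket G,H\rrbracket\rrbracket$ and of the other two cyclic terms. The $i$-th component of each is a sum over triples of indices $a+b+c=i+2$ of brackets $[f_a,[g_b,h_c]]$ (resp. $[f_a,[h_c,g_b]]$, etc.) with signs inherited from the bracket on $\mathfrak{g}$. Because the "convolution-type" index bookkeeping in \eqref{br} is symmetric in exactly the way required, the outer index structure is identical in all three cyclic terms for each fixed triple $(a,b,c)$, so the graded Jacobi identity in $\mathfrak{g}_c$ reduces termwise to the graded Jacobi identity in $\mathfrak{g}$ applied to $f_a,g_b,h_c$. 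This termwise reduction is the main point to get right, and it is where one must be careful that the degree-$0$ nature of $\llbracket~,~\rrbracket$ does not introduce spurious signs: all Koszul signs are governed by the $\mathfrak{g}$-degrees $m,n,p$, which is consistent because the shift by $1$ in passing from $\mathfrak{g}^n$ to position-in-$\mathfrak{g}_c$ is uniform across the $(n+1)$ copies.

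That $\phi$ is a graded Lie algebra morphism is then immediate: $\phi$ sums up all components, and summing the $i$-th component of \eqref{br} over $i$ from $1$ to $m+n+1$ collects every $[f_p,g_q]$ with $1\le p\le m+1$, $1\le q\le n+1$ exactly once, so $\phi\llbracket F,G\rrbracket = \sum_{p,q}[f_p,g_q] = [\sum_p f_p,\sum_q g_q] = [\phi F,\phi G]$ by bilinearity of $[~,~]$; on $(\mathfrak{g}_c)^0=\mathfrak{g}^0$ it is the identity, and one checks the mixed degree-$0$ cases similarly.

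For part (ii), take $\theta=(\theta_1,\theta_2)\in(\mathfrak{g}_c)^1=\mathfrak{g}^1\oplus\mathfrak{g}^1$ and compute, using \eqref{br} with $m=n=1$,
\begin{align*}
\llbracket(\theta_1,\theta_2),(\theta_1,\theta_2)\rrbracket = \big([\theta_1,\theta_1],~[\theta_1,\theta_2]+[\theta_2,\theta_1],~[\theta_2,\theta_2]\big) = \big([\theta_1,\theta_1],~2[\theta_1,\theta_2],~[\theta_2,\theta_2]\big),
\end{align*}
where the middle equality uses that $[~,~]$ is symmetric on degree-$1$ elements. Hence $\llbracket\theta,\theta\rrbracket=0$ in $\mathfrak{g}_c$ if and only if $[\theta_1,\theta_1]=0$, $[\theta_2,\theta_2]=0$ and $[\theta_1,\theta_2]=0$, i.e. if and only if $(\theta_1,\theta_2)$ is a compatible pair of Maurer-Cartan elements of $\mathfrak{g}$. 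The main obstacle, such as it is, lies in part (i): carefully matching indices and Koszul signs in the Jacobi identity so that it collapses termwise onto the identity in $\mathfrak{g}$; everything else is a short direct computation.
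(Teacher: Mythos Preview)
Your proposal is correct and follows essentially the same approach as the paper: both verify the Jacobi identity by observing that the $i$-th component of $\llbracket F,\llbracket G,H\rrbracket\rrbracket$ is $\sum_{a+b+c=i+2}[f_a,[g_b,h_c]]$ and then apply the Jacobi identity in $\mathfrak{g}$ termwise, check that $\phi$ is a morphism by summing all components and using bilinearity, and handle part (ii) by the identical computation $\llbracket(\theta_1,\theta_2),(\theta_1,\theta_2)\rrbracket=([\theta_1,\theta_1],2[\theta_1,\theta_2],[\theta_2,\theta_2])$. Your additional remarks on graded skew-symmetry and on sign bookkeeping are welcome but do not change the route.
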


\begin{proof}
(i) For $(f_1, \ldots, f_{m+1}) \in (\mathfrak{g}_c)^m$, $(g_1, \ldots, g_{n+1}) \in (\mathfrak{g}_c)^n$ and $(h_1, \ldots, h_{p+1}) \in (\mathfrak{g}_c)^p$,
\begin{align*}
&\llbracket (f_1, \ldots, f_{m+1}), \llbracket (g_1, \ldots, g_{n+1}) ,  (h_1, \ldots, h_{p+1}) \rrbracket \rrbracket \\
&= \llbracket (f_1, \ldots, f_{m+1}) , \big( [g_1, h_1], \ldots, \underbrace{\sum_{q+r = i+1}  [g_q, h_r]}_{i\text{-th place}}, \ldots, [g_{n+1}, h_{p+1}] \big) \rrbracket \\
&= \big( [f_1, [g_1, h_1]], \ldots, \underbrace{ \sum_{p+q+r= i+2} [f_p, [g_q, h_r]]}_{i\text{-th place}}, \ldots, [f_{m+1}, [g_{n+1}, h_{p+1}]]      \big)\\
&= \bigg( [[f_1, g_1], h_1] + (-1)^{mn} ~ [g_1, [f_1, h_1]]~, \ldots, \underbrace{\sum_{p+q+r = i+2} [[f_p, g_q], h_r] + (-1)^{mn} ~[g_q, [f_p, h_r]]}_{i\text{-th place}}, \\
& \qquad \qquad \qquad \ldots,  [[f_{m+1}, g_{n+1}], h_{p+1}] + (-1)^{mn} ~ [g_{n+1}, [f_{m+1}, h_{p+1}]]    \bigg) \\
&= \llbracket  \llbracket (f_1, \ldots, f_{m+1}), (g_1, \ldots, g_{n+1}) \rrbracket, (h_1, \ldots, h_{p+1}) \rrbracket  \\
& \qquad \qquad \qquad + (-1)^{mn}~ \llbracket  (g_1, \ldots, g_{n+1}), \llbracket (f_1, \ldots, f_{m+1}), (h_1, \ldots, h_{n+1}) \rrbracket \rrbracket.
\end{align*}
Hence the first part follows. We also have
\begin{align*}
\phi \llbracket (f_1, \ldots, f_{m+1}), (g_1, \ldots, g_{n+1}) \rrbracket =~& \sum_{i=1}^{m+n+1} \sum_{q+r = i+1} [f_q, g_r] \\
=~&[f_1 + \cdots + f_{m+1}, ~g_1 + \cdots + g_{n+1}] \\
=~& [\phi (f_1, \ldots, f_{m+1}), \phi (g_1, \ldots, g_{n+1}) ],
\end{align*}
which completes the second part.

(ii) For a pair $(\theta_1, \theta_2)$ of elements of $\mathfrak{g}^1$, we have
\begin{align*}
\llbracket (\theta_1, \theta_2), (\theta_1, \theta_2) \rrbracket = ( [\theta_1, \theta_1], [\theta_1, \theta_2] + [\theta_2, \theta_1], [\theta_2, \theta_2]) = ([\theta_1, \theta_1], 2[\theta_1, \theta_2], [\theta_2, \theta_2]).
\end{align*}
Thus $(\theta_1, \theta_2) \in (\mathfrak{g}_c)^1$ is a Maurer-Cartan element in $\mathfrak{g}_c$ if and only if $(\theta_1, \theta_2)$ is a pair of compatible Maurer-Cartan elements in $\mathfrak{g}$.
\end{proof}

Thus, from the Gerstenhaber graded Lie bracket (defined in Subsection \ref{sec:GB}) and the above proposition, we get the following.

\begin{thm}
Let $A$ be a vector space.
\begin{itemize}
\item[(i)] Then $C^{\ast + 1}_c (A, A) := \oplus_{n \geq 0} C^{n+1}_c (A,A)$, where
\begin{align*}
C^1_c (A,A) = C^1(A,A) ~~~~ \text{ and } ~~~~ C^{n+1}_c (A,A) = \underbrace{C^{n+1} (A,A) \oplus \cdots \oplus C^{n+1}(A,A)}_{(n+1) \text{ times}}, ~\text{ for } n \geq 1
\end{align*}
is a graded Lie algebra with bracket given by (\ref{br}) where $[~,~]_G$ is replaced by $[~,~]$. Moreover, the map
\begin{align}\label{the-phi}
\phi : C^{\ast + 1}_c (A,A) \rightarrow C^{\ast + 1 } (A,A),~ (f_1, \ldots, f_{n+1}) \mapsto f_1 + \cdots + f_{n+1}, \text{ for } n \geq 0
\end{align}
is a morphism of graded Lie algebras.
\item[(ii)] A pair $(\mu_1, \mu_2) \in C^2(A,A) \oplus C^2(A,A)$ defines a compatible associative algebra structure on $A$ if and only if $(\mu_1, \mu_2) \in C^2_c(A,A)$ is a Maurer-Cartan element in the graded Lie algebra $(C^{\ast + 1}_c (A,A), \llbracket ~, ~ \rrbracket)$.
\end{itemize}
\end{thm}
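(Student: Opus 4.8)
The plan is to obtain the theorem as an immediate specialization of the preceding proposition, using the Gerstenhaber graded Lie algebra as the input $(\mathfrak{g}, [~,~])$. First I would recall that by the discussion of Subsection \ref{sec:GB}, the graded vector space $C^{\ast+1}(A,A) = \oplus_{n\geq 0} C^{n+1}(A,A)$ together with the Gerstenhaber bracket $[~,~]_G$ is a graded Lie algebra, and it is non-negatively graded in the sense required for the construction of $\mathfrak{g}_c$ (its degree $n$ piece is $C^{n+1}(A,A)$, which is nonzero only for $n \geq 0$). Applying the construction $\mathfrak{g} \mapsto \mathfrak{g}_c$ verbatim to this graded Lie algebra produces exactly the graded vector space $C^{\ast+1}_c(A,A)$ described in part (i), with $(C^{\ast+1}_c(A,A))^0 = C^1(A,A)$ and $(C^{\ast+1}_c(A,A))^n = (C^{n+1}(A,A))^{\oplus(n+1)}$ for $n\geq 1$, and the bracket $\llbracket ~,~\rrbracket$ of \eqref{br} with $[~,~]$ replaced by $[~,~]_G$. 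Part (i) of the proposition then immediately gives that $(C^{\ast+1}_c(A,A), \llbracket~,~\rrbracket)$ is a graded Lie algebra and that the map $\phi$ of \eqref{the-phi} is a morphism of graded Lie algebras; this is literally the specialized statement of part (i) of the proposition, so nothing new needs to be checked.

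For part (ii), I would combine part (ii) of the proposition with the Gerstenhaber-bracket characterization of associative structures. By the proposition, a pair $(\mu_1, \mu_2) \in (C^{\ast+1}_c(A,A))^1 = C^2(A,A) \oplus C^2(A,A)$ is a Maurer-Cartan element of $(C^{\ast+1}_c(A,A), \llbracket~,~\rrbracket)$ if and only if $(\mu_1,\mu_2)$ is a compatible pair of Maurer-Cartan elements of the Gerstenhaber graded Lie algebra, i.e., if and only if $[\mu_1,\mu_1]_G = 0$, $[\mu_2,\mu_2]_G = 0$ and $[\mu_1,\mu_2]_G = 0$. By the Proposition in Subsection \ref{sec:GB} (characterizing associative structures) together with the earlier Proposition in Section \ref{section-ca} (that the three vanishing conditions on Gerstenhaber brackets are equivalent to $(\mu_1,\mu_2)$ being a compatible associative structure), this is exactly the condition that $(\mu_1,\mu_2)$ defines a compatible associative algebra structure on $A$. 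Stringing these equivalences together yields part (ii).

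Since the entire statement is a direct application of the general proposition to a specific graded Lie algebra, there is essentially no genuine obstacle: the only thing to be careful about is bookkeeping of the degree shift, namely that an element of degree $1$ in $\mathfrak{g}_c$ corresponds to a pair in $C^2(A,A) \oplus C^2(A,A)$ (because degree $n$ in the shifted complex $C^{\ast+1}$ means an element of $C^{n+1}$), and that the $(n+1)$-fold sum appearing in the definition of $\phi$ and in the bracket is inherited unchanged. I would therefore present the proof as a single short paragraph: ``This follows by applying the previous proposition to the graded Lie algebra $(C^{\ast+1}(A,A), [~,~]_G)$ of Subsection \ref{sec:GB}, together with the characterization of (compatible) associative structures in terms of the Gerstenhaber bracket.'' If a referee wanted more, the only expansion needed would be to spell out the identification of $(\mathfrak{g}_c)^n$ with the stated $C^{n+1}_c(A,A)$ and to recall the two characterization propositions; no new computation is required.
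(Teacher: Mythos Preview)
Your proposal is correct and matches the paper's own treatment exactly: the paper does not give a separate proof but simply states that the theorem follows from the preceding general proposition applied to the Gerstenhaber graded Lie algebra $(C^{\ast+1}(A,A), [~,~]_G)$. Your careful bookkeeping of the degree shift and the identification of Maurer-Cartan elements with compatible associative structures is precisely the intended specialization.
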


Let $(A, \mu_1, \mu_2)$ be a compatible associative algebra. Then it follows from Remark \ref{mc-rem} that there is a degree $1$ coboundary map
\begin{align}\label{d-mu}
d_{(\mu_1, \mu_2)} := \llbracket (\mu_1, \mu_2), - \rrbracket : C^n_c (A,A) \rightarrow C^{n+1}_c (A,A), \text{ for } n \geq 1
\end{align}
which makes $(C^{\ast + 1}_c (A, A), \llbracket ~, ~ \rrbracket, d_{(\mu_1, \mu_2)} )$ into a differential graded Lie algebra.

\begin{rmk}
Later we will introduce the cohomology of a compatible associative algebra $A$ with coefficients in a compatible $A$-bimodule. We will see that the cohomology of the cochain complex $\{ C^\ast_c (A,A), d_{(\mu_1, \mu_2)} \}$ is the cohomology of $A$ with coefficients in itself.
\end{rmk}

We also have the following result from Remark \ref{mc-rem}.

\begin{pro}
Let $(\mu_1, \mu_2)$ be a compatible associative algebra structure on $A$. For any $(\mu_1', \mu_2') \in C^2_c (A,A) = C^2(A,A) \oplus C^2(A,A)$, the pair $(\mu_1 + \mu_1', \mu_2 + \mu_2')$ is a compatible associative algebra structure on $A$ if and only if $(\mu_1', \mu_2')$ satisfies
\begin{align*}
d_{(\mu_1, \mu_2)} (\mu_1', \mu_2') + \frac{1}{2} \llbracket (\mu_1', \mu_2') , (\mu_1', \mu_2')  \rrbracket = 0.
\end{align*}
\end{pro}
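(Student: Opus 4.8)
The plan is to obtain this statement as an immediate specialization of the Maurer--Cartan calculus recorded in Remark \ref{mc-rem}, applied to the graded Lie algebra $(C^{\ast+1}_c(A,A), \llbracket~,~\rrbracket)$, rather than by a direct (and lengthy) componentwise verification of the associativity and compatibility identities for $\mu_1 + \mu_1'$ and $\mu_2 + \mu_2'$.

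First I would invoke part (ii) of the Theorem: a pair of bilinear maps on $A$ is a compatible associative algebra structure precisely when it is a Maurer--Cartan element of $(C^{\ast+1}_c(A,A), \llbracket~,~\rrbracket)$. Applying this to the pair $(\mu_1 + \mu_1',\, \mu_2 + \mu_2') \in C^2_c(A,A) = (C^{\ast+1}_c(A,A))^1$, the claim reduces to the assertion that $(\mu_1 + \mu_1',\, \mu_2 + \mu_2')$ is a Maurer--Cartan element of $\mathfrak{g}_c := C^{\ast+1}_c(A,A)$. Next, since $(\mu_1,\mu_2)$ is by hypothesis a compatible associative algebra structure, the same Theorem tells us that $(\mu_1,\mu_2)$ is itself a Maurer--Cartan element of $\mathfrak{g}_c$. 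Hence Remark \ref{mc-rem}(ii), taken with $\theta = (\mu_1,\mu_2)$ and $\theta' = (\mu_1',\mu_2')$, says that $\theta + \theta' = (\mu_1 + \mu_1',\, \mu_2 + \mu_2')$ is a Maurer--Cartan element of $\mathfrak{g}_c$ if and only if $d_\theta(\theta') + \tfrac{1}{2}\llbracket \theta', \theta' \rrbracket = 0$. Since $d_{(\mu_1,\mu_2)} = \llbracket (\mu_1,\mu_2), - \rrbracket$ by \eqref{d-mu}, this is exactly the displayed equation in the statement, and the proof is complete.

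There is no genuine obstacle here; the only point requiring a moment's care is that the degree conventions line up. Namely, a compatible associative structure $(\mu_1,\mu_2)$ sits in degree $1$ of $\mathfrak{g}_c$ (because $\mathfrak{g}^1 = C^2(A,A)$ under the shift used to define the Gerstenhaber bracket in Subsection \ref{sec:GB}), and the bracket $\llbracket~,~\rrbracket$ is of degree $0$, so that Remark \ref{mc-rem}(ii) applies verbatim to $\mathfrak{g}_c$. One could alternatively expand $\llbracket (\mu_1 + \mu_1',\, \mu_2 + \mu_2'),\, (\mu_1 + \mu_1',\, \mu_2 + \mu_2') \rrbracket$ using bilinearity of $\llbracket~,~\rrbracket$ together with the vanishing of $\llbracket (\mu_1,\mu_2), (\mu_1,\mu_2) \rrbracket$, which reproduces the same identity directly; but the abstract route above is cleaner and avoids any computation.
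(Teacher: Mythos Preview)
Your argument is correct and is exactly the approach the paper takes: the paper simply states that the proposition follows from Remark \ref{mc-rem}, and you have spelled out precisely how, by combining part (ii) of the preceding Theorem (identifying compatible associative structures with Maurer--Cartan elements of $\mathfrak{g}_c$) with Remark \ref{mc-rem}(ii).
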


\section{Cohomology of compatible associative algebras}\label{sec:cohomology}
In this section, we introduce the cohomology of a compatible associative algebra with coefficients in a compatible bimodule. When considering the cohomology of a compatible associative algebra with coefficients in itself, it carries a graded Lie algebra structure. We also introduce abelian extensions of a compatible associative algebra and classify equivalence classes of abelian extensions in terms of the second cohomology group.

\subsection{Cohomology}

Let $(A, \mu_1, \mu_2)$ be a compatible associative algebra and $M = (M, l_1, r_1, l_2, r_2)$ be a compatible $A$-bimodule. Let
\begin{align*}
\delta_1 : C^n(A, M) \rightarrow C^{n+1} (A, M), ~ n \geq 0,
\end{align*}
denotes the coboundary operator for the Hochschild cohomology of $(A, \mu_1)$ with coefficients in the bimodule $(M, l_1, r_1)$, and
\begin{align*}
\delta_2 : C^n(A, M) \rightarrow C^{n+1} (A, M), ~ n \geq 0,
\end{align*}
denotes the coboundary operator for the Hochschild cohomology of $(A, \mu_2)$ with coefficients in the bimodule $(M, l_2, r_2)$. Then we obviously have
\begin{align*}
(\delta_1)^2 = 0 \qquad \text{ and } \qquad (\delta_2)^2 = 0.
\end{align*}
Since the two associative structures $\mu_1$ and $\mu_2$ on $A$, and the corresponding bimodule structures on $M$ are compatible, we may expect some compatibility between the coboundaries $\delta_1$ and $\delta_2$. Before we state the compatibility, we observe the following.

Here we first give the interpretation of $\delta_1$ and $\delta_2$ in terms of two associative algebra structures on $A \oplus M$ given in Proposition \ref{semi-prop}. Let $\pi_1, \pi_2 \in C^2 (A \oplus M, A \oplus M)$ denote the elements corresponding to the associative products on $A \oplus M$.

Note that any map $f \in C^n(A, M)$ can be lifts to a map $\widetilde{f} \in C^n (A \oplus M, A \oplus M)$ by
\begin{align*}
\widetilde{f} \big( (a_1, m_1), \ldots, (a_n, m_n)  \big) = \big( 0, f (a_1, \ldots, a_n ) \big),
\end{align*}
for $(a_i, m_i) \in A \oplus M$ and $i=1, \ldots, n$. Moreover, we have $f=0$ if and only if $\widetilde{f} = 0$. With all these notations, we have
\begin{align*}
\widetilde{(\delta_1 f )} = (-1)^{n-1}~[ \pi_1, \widetilde{f}]_\mathsf{G}  \qquad \text{ and } \qquad \widetilde{(\delta_2 f )} = (-1)^{n-1}~[ \pi_2, \widetilde{f}]_\mathsf{G},
\end{align*}
for $f \in C^n(A, M)$. We are now ready to prove the compatibility condition satisfied by $\delta_1$ and $\delta_2$. More precisely, we have the following.

\begin{pro}\label{delta-comp}
The coboundary operators $\delta_1$ and $\delta_2$ satisfy
\begin{align*}
\delta_1 \circ \delta_2 + \delta_2 \circ \delta_1 = 0.
\end{align*}
\end{pro}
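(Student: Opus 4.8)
The plan is to lift everything to the semidirect product algebra $A \oplus M$ and reduce the identity $\delta_1 \circ \delta_2 + \delta_2 \circ \delta_1 = 0$ to the single Gerstenhaber-bracket relation $[\pi_1, \pi_2]_{\mathsf G} = 0$, which holds because $(A \oplus M, \pi_1, \pi_2)$ is a compatible associative algebra by Proposition \ref{semi-prop}. First I would recall the two facts already established in the excerpt: the lifting $f \mapsto \widetilde{f}$ from $C^n(A,M)$ to $C^n(A \oplus M, A \oplus M)$ is injective (indeed $f = 0 \iff \widetilde f = 0$), and it intertwines the Hochschild differentials with Gerstenhaber brackets, namely $\widetilde{\delta_i f} = (-1)^{n-1}[\pi_i, \widetilde f]_{\mathsf G}$ for $i = 1, 2$ and $f \in C^n(A,M)$.

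Now I would compute, for $f \in C^n(A,M)$,
\begin{align*}
\widetilde{(\delta_1 \delta_2 f)} &= (-1)^{n}\, [\pi_1, \widetilde{\delta_2 f}]_{\mathsf G} = (-1)^{n}(-1)^{n-1}\, [\pi_1, [\pi_2, \widetilde f]_{\mathsf G}]_{\mathsf G} = -\,[\pi_1, [\pi_2, \widetilde f]_{\mathsf G}]_{\mathsf G},
\end{align*}
and symmetrically $\widetilde{(\delta_2 \delta_1 f)} = -\,[\pi_2, [\pi_1, \widetilde f]_{\mathsf G}]_{\mathsf G}$. Adding the two and using the graded Jacobi identity for $[~,~]_{\mathsf G}$ (with $\pi_1, \pi_2$ of degree $1$ in $C^{\ast+1}(A \oplus M, A \oplus M)$, hence even, and $\widetilde f$ of degree $n-1$), the sum $[\pi_1, [\pi_2, \widetilde f]_{\mathsf G}]_{\mathsf G} + [\pi_2, [\pi_1, \widetilde f]_{\mathsf G}]_{\mathsf G}$ collapses to $[[\pi_1, \pi_2]_{\mathsf G}, \widetilde f]_{\mathsf G}$ (up to a sign that is immaterial here). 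Since $(A \oplus M, \pi_1, \pi_2)$ is a compatible associative algebra, the earlier characterization gives $[\pi_1, \pi_2]_{\mathsf G} = 0$, so $\widetilde{(\delta_1 \delta_2 f + \delta_2 \delta_1 f)} = 0$, and injectivity of the lift yields $\delta_1 \delta_2 f + \delta_2 \delta_1 f = 0$.

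The only delicate point is bookkeeping the Koszul signs in the graded Jacobi identity and in the two applications of $\widetilde{\delta_i (\cdot)} = (-1)^{\deg - 1}[\pi_i, \widetilde{\cdot}]_{\mathsf G}$ (note the degree shifts: $\widetilde f \in C^n$, but $\widetilde{\delta_2 f} \in C^{n+1}$, so the exponents $n-1$ and $n$ appear respectively); I would verify that all signs in the two cross-terms agree so that they genuinely add rather than cancel each other prematurely, and that the residual sign in $[[\pi_1,\pi_2]_{\mathsf G}, \widetilde f]_{\mathsf G}$ is irrelevant since that bracket vanishes. An alternative, purely computational route — expanding $(\delta_1 \delta_2 f)(a_1, \dots, a_{n+2})$ directly from \eqref{hoch-diff} and matching terms against $(\delta_2 \delta_1 f)$ using the compatibility of $\mu_1, \mu_2$ and of the bimodule actions \eqref{comp-bi1}–\eqref{comp-bi3} — also works but is considerably more tedious; I would relegate it to a remark at most. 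I expect the sign verification in the Jacobi step to be the main (though minor) obstacle.
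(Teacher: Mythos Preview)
Your proposal is correct and follows essentially the same route as the paper's proof: lift $f$ to $\widetilde f$ on $A\oplus M$, rewrite $\widetilde{\delta_i(\cdot)}$ as a Gerstenhaber bracket with $\pi_i$, apply graded Jacobi to reduce the sum to $[[\pi_1,\pi_2]_{\mathsf G},\widetilde f]_{\mathsf G}$, and invoke $[\pi_1,\pi_2]_{\mathsf G}=0$ from the compatible semidirect-product structure. One harmless slip: $\pi_1,\pi_2$ have degree $1$ in $C^{\ast+1}$ and are therefore \emph{odd}, not even --- but since you correctly observe the residual sign is irrelevant once $[\pi_1,\pi_2]_{\mathsf G}=0$, this does not affect the argument.
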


\begin{proof}
For any $f \in C^n (A,M)$, we have
\begin{align*}
&\widetilde{( \delta_1 \circ \delta_2 + \delta_2 \circ \delta_1)(f)}  \\
&= (-1)^{n} ~  [\pi_1, \widetilde{\delta_2 f}]_\mathsf{G} ~+~ (-1)^{n} ~ [\pi_2, \widetilde{\delta_1 f}]_\mathsf{G} \\
&= - [\pi_1, [\pi_2, \widetilde{f}]_\mathsf{G} ]_\mathsf{G}  - [\pi_2, [\pi_1, \widetilde{f}]_\mathsf{G} ]_\mathsf{G}  \\
&= - [[\pi_1, \pi_2]_\mathsf{G}, \widetilde{f}]_\mathsf{G}  = 0 ~~~~ \qquad (\text{because} ~[\pi_1, \pi_2]_\mathsf{G} = 0).
\end{align*}
Therefore, it follows that  $  ( \delta_1 \circ \delta_2 + \delta_1 \circ \delta_1)(f) = 0$. Hence the result follows.
\end{proof}

The compatibility condition of the above proposition leads to cohomology associated with a compatible associative algebra with coefficients in a compatible bimodule. Let $A$ be a compatible associative algebra and $M$ be a compatible $A$-bimodule. We define the $n$-th cochain group $C^n_c (A, M)$, for $n \geq 0$, by
\begin{align*}
C^0_c (A, M) :=~& \{ m \in M ~|~ a \cdot_1 m - m \cdot_1 a = a \cdot_2 m - m \cdot_2 a, ~\forall a \in A \},\\
C^n_c (A, M) :=~& \underbrace{C^n (A, M) \oplus \cdots \oplus C^n (A, M)}_{n \text{ copies}}, ~ \text{ for } n \geq 1.
\end{align*}
Define a map $\delta_c : C^n_c (A, M) \rightarrow C^{n+1}_c (A, M)$, for $n \geq 0$, by
\begin{align}
\delta_c (m) (a) :=~&  a \cdot_1 m - m \cdot_1 a = a \cdot_2 m - m \cdot_2 a, \text{ for } m \in C^0_c (A, M) \text{ and } a \in A, \label{dc-1}\\
\delta_c (f_1, \ldots, f_n ) :=~& ( \delta_1 f_1, \ldots, \underbrace{\delta_1 f_i + \delta_2 f_{i-1}}_{i-\text{th place}}, \ldots, \delta_2 f_n),\label{dc-2}
\end{align}
for $(f_1, \ldots, f_n ) \in C^n_c (A, M)$. The map $\delta_c$ can be understood by the following diagram:

\begin{align*}
\xymatrix{
 & & &  C^3(A,M) & \\
 & & C^2(A,M)  \ar[ru]^{\delta_1} \ar[rd]_{\delta_2} & & \\
C^0_c (A,M) \ar[r]^{\delta_1 = \delta_2} & C^1(A,M) \ar[ru]^{\delta_1} \ar[rd]_{\delta_2} & & C^3(A,M) & \cdots \cdot\\
 & & C^2(A,M)  \ar[ru]^{\delta_1} \ar[rd]_{\delta_2} & & \\
 & & & C^3(A,M) &  \\
}
\end{align*}

Observe that, Proposition \ref{delta-comp} and the above diagrammatic presentation of $\delta_c$ shows that $(\delta_c)^2 =0$. However, we give a rigorous proof of the same fact.

\begin{pro}
The map $\delta_c$ is a coboundary operator, i.e., $(\delta_c)^2 = 0$.
\end{pro}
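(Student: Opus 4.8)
The plan is to verify $(\delta_c)^2 = 0$ by splitting into cases according to the degree $n$ of the cochain, using the two structural facts already at our disposal: that $\delta_1^2 = \delta_2^2 = 0$ (each being a Hochschild differential), and that $\delta_1 \circ \delta_2 + \delta_2 \circ \delta_1 = 0$ (Proposition \ref{delta-comp}). For $n \geq 1$ the argument is purely a bookkeeping computation: given $(f_1, \dots, f_n) \in C^n_c(A,M)$, I would apply formula (\ref{dc-2}) twice and examine the entry in the $i$-th place of the resulting tuple in $C^{n+2}_c(A,M)$. The first application produces the tuple whose $j$-th entry is $\delta_1 f_j + \delta_2 f_{j-1}$ (with the convention that $f_0 = f_{n+1} = 0$); applying $\delta_c$ again, the $i$-th entry becomes $\delta_1(\delta_1 f_i + \delta_2 f_{i-1}) + \delta_2(\delta_1 f_{i-1} + \delta_2 f_{i-2})$, which regroups as $\delta_1^2 f_i + (\delta_1 \delta_2 + \delta_2 \delta_1) f_{i-1} + \delta_2^2 f_{i-2} = 0$. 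This holds uniformly for every $i$, so the whole tuple vanishes.

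It remains to handle the low-degree cases where the formula (\ref{dc-1}) for $\delta_c$ on $C^0_c(A,M)$ intervenes. For $m \in C^0_c(A,M)$ I would compute $\delta_c(\delta_c m) = \delta_c(\delta_1 m)$, where $\delta_1 m \in C^1(A,M) = C^1_c(A,M)$, and by (\ref{dc-2}) this equals $(\delta_1(\delta_1 m), \delta_2(\delta_1 m)) = (\delta_1^2 m, \delta_2 \delta_1 m)$; the first component vanishes since $\delta_1^2 = 0$, and the second vanishes because $\delta_2 \delta_1 m = -\delta_1 \delta_2 m$, but one checks directly from the defining relation of $C^0_c(A,M)$ that $\delta_1 m = \delta_2 m$ as elements of $C^1(A,M)$ — indeed both equal the map $a \mapsto a \cdot_1 m - m \cdot_1 a = a \cdot_2 m - m \cdot_2 a$ — so $\delta_2 \delta_1 m = \delta_2 \delta_2 m = 0$. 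Thus $(\delta_c)^2 = 0$ on $C^0_c(A,M)$ as well, and combined with the $n \geq 1$ case the proof is complete.

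The only genuinely delicate point is making sure the index conventions at the boundary of the tuple (the terms with index $0$, $1$, or $n+1$, $n+2$) are consistent, so that the regrouping $\delta_1^2 f_i + (\delta_1\delta_2 + \delta_2\delta_1)f_{i-1} + \delta_2^2 f_{i-2} = 0$ really does account for every term that appears; adopting the convention $f_j = 0$ for $j \leq 0$ and $j \geq n+1$ throughout makes this transparent. Everything else is a routine application of the two already-established identities, so I would keep the write-up short — essentially just the displayed regrouping for the $i$-th entry together with the separate one-line check in degree $0$.
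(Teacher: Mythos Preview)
Your proposal is correct and follows essentially the same approach as the paper: both split into the degree-$0$ case (handled via $\delta_1 m = \delta_2 m$ on $C^0_c(A,M)$) and the degree-$\geq 1$ case (handled by expanding the $i$-th component and invoking $\delta_1^2 = \delta_2^2 = 0$ together with Proposition~\ref{delta-comp}). Your use of the convention $f_j = 0$ for $j \leq 0$ or $j \geq n+1$ makes the bookkeeping slightly cleaner than the paper's explicit enumeration of the boundary terms, but the argument is the same.
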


\begin{proof}
For $m \in C^0_c (A,M)$, we have
\begin{align*}
(\delta_c)^2 (m) = \delta_c ( \delta_c m ) =~& (\delta_1 \delta_c m~,\delta_2 \delta_c m ) \\
=~&  ( \delta_1 \delta_1 m~, \delta_2 \delta_2 m) = 0.
\end{align*}
Moreover, for any $(f_1, \ldots, f_n) \in C^n_c (A,M)$, $n \geq 1$, we have
\begin{align*}
(\delta_c)^2 (f_1, \ldots, f_n)
&= \delta_c \big(    \delta_1 f_1, \ldots,  \delta_1 f_i +  \delta_2 f_{i-1}, \ldots,  \delta_2 f_n \big) \\
&= \big(    \delta_1  \delta_1 f_1~,  \delta_2  \delta_1 f_1 +  \delta_1  \delta_2 f_1 +  \delta_1  \delta_1 f_2~, \ldots, \\
& \qquad \underbrace{   \delta_2  \delta_2 f_{i-2} +  \delta_2 \delta_1 f_{i-1} + \delta_1 \delta_2 f_{i-1} + \delta_1  \delta_1 f_i  }_{3 \leq i \leq n-1}~, \ldots, \\
& \qquad  \delta_2 \delta_2  f_{n-1} + \delta_2 \delta_1 f_n +  \delta_1 \delta_2 f_n ~,~  \delta_2  \delta_2 f_n \big) \\
&= 0 ~~~\quad (\text{from Proposition } \ref{delta-comp}).
\end{align*}
This proves that $(\delta_c)^2 = 0$.
\end{proof}

Thus, we have a cochain complex $\{ C^\ast_c (A, M), \delta_c \}$. Let $Z^n_c (A, M)$ denote the space of $n$-cocycles and $B^n_c (A, M)$ the space of $n$-coboundaries. Then we have $B^n_c (A, M) \subset Z^n_c (A, M)$, for $n \geq 0$. The corresponding quotient groups
\begin{align*}
H^n_c (A, M) := \frac{ Z^n_c (A, M) }{ B^n_c (A, M)}, \text{ for } n \geq 0
\end{align*}
are called the cohomology of the compatible associative algebra $A$ with coefficients in the compatible $A$-bimodule $M$.

\medskip

It follows from the above definition that
\begin{align*}
H^0_c (A, M) = \{ m \in M ~|~ a \cdot_1 m - m \cdot_1 a = a \cdot_2 m - m \cdot_2 a, ~ \forall a \in A \}.
\end{align*}

A linear map $D: A \rightarrow M$ is said to be a derivation on $A$ with values in the compatible $A$-bimodule $M$ if $D$ satisfies
\begin{align*}
D ( a \cdot_1 b ) = D(a) \cdot_1 b + a \cdot_1 D(b) \quad \text{ and } \quad D ( a \cdot_2 b ) = D(a) \cdot_2 b + a \cdot_2 D(b), \text{ for } a, b \in A.
\end{align*}
We denote the space of derivations by $\mathrm{Der}(A, M)$. A derivation $D$ is said to be an inner derivation if $D$ is of the form
$D (a) = a \cdot_1 m -  m \cdot_1 a = a \cdot_2 m -  m \cdot_2 a$, for some $m \in C^0_c (A, M).$ The space of inner derivations are denoted by $\mathrm{InnDer}(A, M).$ Then we have
\begin{align*}
H^1_c (A,M) = \frac{  \mathrm{Der}(A, M) }{ \mathrm{InnDer}(A, M) }.
\end{align*}

\subsection{Particular case: Cohomology with self coefficients}

Let $A = (A, \mu_1, \mu_2)$ be a compatible associative algebra. Then we have seen that $A$ is itself a compatible $A$-bimodule. If $\delta_1$ and $\delta_2$, respectively, denote the coboundary operator
for the Hochschild cohomology of $(A, \mu_1)$ and $(A, \mu_2)$ with self coefficients, then we have from (\ref{hoch-diff-brk}) that
\begin{align*}
\delta_1 f = (-1)^{n-1} [\mu_1, f]_G ~~~~ \text{ and } ~~~~ \delta_2 f = (-1)^{n-1} [\mu_2, f ]_G, ~ \text{ for } f \in C^n (A, A).
\end{align*}
Thus, it follows from (\ref{dc-2}) that the coboundary map $\delta_c : C^n_c (A, A) \rightarrow C^{n+1}_c(A, A)$ for the cohomology of the compatible associative algebra $A$ with self coefficients is given by
\begin{align}
\delta_c (f_1, \ldots, f_n ) =~& (-1)^{n-1} \big(  [\mu_1, f_1]_G, [\mu_1, f_2]_G + [\mu_2, f_1]_G, \ldots, [\mu_2, f_n]_G \big) \label{diff-gers}\\
=~& (-1)^{n-1} \llbracket (\mu_1, \mu_2), (f_1, \ldots, f_n ) \rrbracket, \nonumber
\end{align}
for $(f_1, \ldots, f_n ) \in C^n_c (A, A)$, $n \geq 1$. This shows that $\delta_c$ is same as the coboundary operator $d_{(\mu_1, \mu_2)}$  (defined in (\ref{d-mu})) up to a sign. Therefore, the corresponding cohomologies are isomorphic.

As a consequence, we get the following.

\begin{thm}\label{thm-gla-coho}
Let $A$ be a compatible associative algebra. Then the graded Lie bracket $\llbracket ~, ~ \rrbracket$ on $C^{\ast +1 }_c (A, A)$ induces a graded Lie bracket on the graded space $H^{\ast +1}_c (A, A)$ of cohomology groups.
\end{thm}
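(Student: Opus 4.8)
The statement to prove is that the graded Lie bracket $\llbracket ~,~ \rrbracket$ on $C^{\ast+1}_c(A,A)$ descends to a graded Lie bracket on the cohomology $H^{\ast+1}_c(A,A)$. The plan is to exhibit this as an instance of the general fact about differential graded Lie algebras: if $(\mathfrak{g}, [~,~], d)$ is a DGLA, then the bracket on $\mathfrak{g}$ induces one on the cohomology $H^\ast(\mathfrak{g}, d)$. First I would invoke the identification already established in the excerpt: by equation (\ref{diff-gers}), the coboundary operator $\delta_c$ on $C^{\ast}_c(A,A)$ agrees (up to the sign $(-1)^{n-1}$) with $d_{(\mu_1,\mu_2)} = \llbracket (\mu_1,\mu_2), - \rrbracket$, and by Remark \ref{mc-rem}(i) together with the Theorem preceding (\ref{d-mu}), the triple $(C^{\ast+1}_c(A,A), \llbracket ~,~ \rrbracket, d_{(\mu_1,\mu_2)})$ is a differential graded Lie algebra, since $(\mu_1,\mu_2)$ is a Maurer--Cartan element. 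So the whole statement reduces to the DGLA fact.

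The key steps are then the two standard verifications. First, the bracket of two cocycles is a cocycle: if $d_{(\mu_1,\mu_2)} x = 0$ and $d_{(\mu_1,\mu_2)} y = 0$, then by the compatibility of $d_{(\mu_1,\mu_2)}$ with the bracket (the Leibniz rule $d[x,y] = [dx,y] \pm [x,dy]$, which holds because $d_{(\mu_1,\mu_2)} = \llbracket(\mu_1,\mu_2),-\rrbracket$ is an inner derivation of the graded Lie algebra and hence a derivation of the bracket by the graded Jacobi identity), we get $d_{(\mu_1,\mu_2)}\llbracket x,y \rrbracket = 0$. Second, the bracket of a cocycle with a coboundary is a coboundary: if $d_{(\mu_1,\mu_2)} x = 0$ and $y = d_{(\mu_1,\mu_2)} z$, then again by the Leibniz rule $\llbracket x, d_{(\mu_1,\mu_2)} z \rrbracket = \pm d_{(\mu_1,\mu_2)} \llbracket x, z \rrbracket$ (the term $\llbracket d_{(\mu_1,\mu_2)} x, z\rrbracket$ vanishes), so $\llbracket x, y \rrbracket$ lies in the image of $d_{(\mu_1,\mu_2)}$; the symmetric case is identical. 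These two facts show that $\llbracket ~,~ \rrbracket$ restricted to cocycles sends $Z \times Z$ to $Z$ and sends $Z \times B + B \times Z$ to $B$, hence factors through the quotient $H^{\ast+1}_c(A,A) = Z/B$. Finally, the graded antisymmetry and graded Jacobi identity for the induced bracket on $H^{\ast+1}_c(A,A)$ are inherited directly from those on $C^{\ast+1}_c(A,A)$, since they are identities among representatives.

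I do not anticipate any real obstacle here: every ingredient — that $\llbracket ~,~ \rrbracket$ is a graded Lie bracket, that $(\mu_1,\mu_2)$ is a Maurer--Cartan element, that $d_{(\mu_1,\mu_2)}$ is therefore a degree $1$ differential satisfying the graded Leibniz rule with respect to $\llbracket ~,~ \rrbracket$ — has been proved or recorded earlier in the excerpt. The only mild subtlety is bookkeeping with the sign $(-1)^{n-1}$ relating $\delta_c$ and $d_{(\mu_1,\mu_2)}$; since this sign depends only on the cochain degree and not on the cocycle, it does not affect which cochains are cocycles or coboundaries, so one may freely pass between $\delta_c$ and $d_{(\mu_1,\mu_2)}$ when identifying $Z^n_c$, $B^n_c$, and $H^n_c$. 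The cleanest writeup is simply to cite the general DGLA principle and point to Remark \ref{mc-rem}(i) and equation (\ref{diff-gers}), spelling out the Leibniz-rule computations for the cocycle and coboundary closure as above.
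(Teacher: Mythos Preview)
Your proposal is correct and follows precisely the approach the paper intends: the theorem is stated immediately after (\ref{diff-gers}) with the words ``As a consequence, we get the following,'' so the paper is simply invoking the standard DGLA fact (Remark~\ref{mc-rem}(i)) together with the identification of $\delta_c$ with $d_{(\mu_1,\mu_2)}$ up to the sign $(-1)^{n-1}$. Your writeup supplies exactly the details the paper leaves implicit, and your remark about the sign being harmless for the identification of cocycles and coboundaries is the only point that needs comment.
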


Let $(A, \mu_1, \mu_2)$ be a compatible associative algebra. Then we know that $(A, \mu_1 + \mu_2)$ is an associative algebra. Note that the cohomology of the compatible associative algebra $(A, \mu_1, \mu_2)$ with coefficients in itself is induced by the Maurer-Cartan element $(\mu_1, \mu_2) \in C^2_c (A, A)$ in the graded Lie algebra $(C^{\ast +1}_c (A, A), \llbracket ~, ~ \rrbracket)$. On the other hand, the Hochschild cohomology of the associative algebra $(A, \mu_1+ \mu_2)$ with coefficients in itself is induced by the Maurer-Cartan element $\mu_1 + \mu_2 \in C^2(A, A)$ in the graded Lie algebra $(C^{\ast +1} (A, A), [~,~]_G)$. Moreover, it follows from (\ref{the-phi}) that $\phi ((\mu_1, \mu_2)) = \mu_1 + \mu_2$. Hence, the graded Lie algebra map $\phi$ takes the Maurer-Cartan element $(\mu_1, \mu_2) \in C^2_c (A, A)$ to the Maurer-Cartan element $\mu_1 + \mu_2 \in C^2(A,A).$ Therefore, $\phi$ gives rise to a map between the cohomologies induced by Maurer-Cartan elements.

\begin{thm}
Let $(A, \mu_1, \mu_2)$ be a compatible associative algebra. Then the map (\ref{the-phi}) induces a morphism
\begin{align*}
\phi_* : H^\ast_c (A,A) \rightarrow H^\ast (A,A)
\end{align*}
from the cohomology of the compatible associative algebra $(A, \mu_1, \mu_2)$ with coefficients in itself to the Hochschild cohomology of the associative algebra $(A, \mu_1 + \mu_2)$ with coefficients in itself.
\end{thm}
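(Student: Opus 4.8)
The plan is to obtain $\phi_\ast$ directly from the fact, established in the theorem containing $(\ref{the-phi})$, that the map $\phi$ is a morphism of graded Lie algebras, combined with the observation (recorded in the discussion preceding the statement) that $\phi$ carries the Maurer--Cartan element $(\mu_1,\mu_2)\in C^2_c(A,A)$ to the Maurer--Cartan element $\mu_1+\mu_2\in C^2(A,A)$. The underlying principle is purely formal: if $\psi\colon(\mathfrak g,[\,,\,])\to(\mathfrak h,[\,,\,])$ is a morphism of graded Lie algebras and $\theta\in\mathfrak g^1$ is a Maurer--Cartan element, then $\psi(\theta)$ is a Maurer--Cartan element of $\mathfrak h$ and $\psi$ intertwines the differentials $d_\theta=[\theta,-]$ and $d_{\psi(\theta)}=[\psi(\theta),-]$ of Remark \ref{mc-rem}(i), since $\psi(d_\theta x)=\psi([\theta,x])=[\psi(\theta),\psi(x)]=d_{\psi(\theta)}(\psi(x))$; hence $\psi$ is a chain map and descends to cohomology. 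We apply this with $\psi=\phi$, $\mathfrak g=C^{\ast+1}_c(A,A)$, $\mathfrak h=C^{\ast+1}(A,A)$ and $\theta=(\mu_1,\mu_2)$.

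Explicitly, recall from $(\ref{diff-gers})$ that on $C^n_c(A,A)$ one has $\delta_c=(-1)^{n-1}\llbracket(\mu_1,\mu_2),-\rrbracket$, and from $(\ref{hoch-diff-brk})$ that the Hochschild coboundary of $(A,\mu_1+\mu_2)$ is $\delta=(-1)^{n-1}[\mu_1+\mu_2,-]_G$ on $C^n(A,A)$. Since $\phi$ preserves the cochain degree the sign prefactors agree, and using that $\phi$ is a graded Lie algebra morphism with $\phi((\mu_1,\mu_2))=\mu_1+\mu_2$ we get, for $n\geq1$ and $(f_1,\ldots,f_n)\in C^n_c(A,A)$,
\begin{align*}
\phi\big(\delta_c(f_1,\ldots,f_n)\big)
&=(-1)^{n-1}\,\phi\big(\llbracket(\mu_1,\mu_2),(f_1,\ldots,f_n)\rrbracket\big)\\
&=(-1)^{n-1}\,[\,\phi(\mu_1,\mu_2),\,\phi(f_1,\ldots,f_n)\,]_G\\
&=(-1)^{n-1}\,[\,\mu_1+\mu_2,\ f_1+\cdots+f_n\,]_G
=\delta\big(\phi(f_1,\ldots,f_n)\big).
\end{align*}
Thus $\phi$ commutes with the coboundary operators in all cochain degrees $n\geq1$, so it carries cocycles to cocycles and coboundaries to coboundaries there and induces $\phi_\ast\colon H^n_c(A,A)\to H^n(A,A)$ for $n\geq1$.

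It remains to treat the bottom of the complexes, where $\phi$ is the inclusion $C^0_c(A,A)\hookrightarrow A=C^0(A,A)$. A short computation from $(\ref{dc-1})$ gives $\delta(\phi(m))=2\,\phi(\delta_c(m))$ for $m\in C^0_c(A,A)$, because $\delta(m)(a)=a\cdot m-m\cdot a=(a\cdot_1 m-m\cdot_1 a)+(a\cdot_2 m-m\cdot_2 a)$ and the two summands coincide by the defining relation of $C^0_c(A,A)$; since $\mathrm{char}\,\mathbb K=0$ this factor of $2$ is harmless and still yields $\phi(Z^0_c(A,A))\subseteq Z^0(A,A)$ and $\phi(B^1_c(A,A))\subseteq B^1(A,A)$, so $\phi$ induces well-defined maps in degrees $0$ and $1$ as well. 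Assembling all degrees gives the asserted morphism $\phi_\ast\colon H^\ast_c(A,A)\to H^\ast(A,A)$.

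Since every step is a formal consequence of $\phi$ being a morphism of graded Lie algebras together with the explicit sign formulas already recorded, I do not expect a substantive obstacle here; the only points requiring care are the bookkeeping of the degree shift between the graded Lie algebra $C^{\ast+1}_c(A,A)$ and the cochain complex $C^\ast_c(A,A)$, and the slightly anomalous behaviour in cochain degree $0$, where the two complexes are not literally governed by the graded bracket.
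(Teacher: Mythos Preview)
Your argument is correct and is essentially the same as the paper's: the theorem is stated as an immediate consequence of the preceding paragraph, which records precisely that $\phi$ is a graded Lie algebra morphism sending the Maurer--Cartan element $(\mu_1,\mu_2)$ to $\mu_1+\mu_2$, and hence intertwines the induced differentials. Your write-up is in fact more careful than the paper's, since you explicitly treat the anomalous degree~$0$ (which lies outside the graded Lie algebra $C^{\ast+1}_c(A,A)$ and which the paper does not discuss); your observation that $\delta\circ\phi=2\,\phi\circ\delta_c$ there, together with $\mathrm{char}\,\mathbb K=0$, correctly shows that $\phi$ still respects cocycles in degree~$0$ and coboundaries in degree~$1$.
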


\subsection{Relation with the cohomology of compatible Lie algebras}
Recently, the authors in \cite{comp-lie} introduced a cohomology theory for compatible Lie algebras. In this subsection, we show that our cohomology of compatible associative algebras is related to the cohomology of \cite{comp-lie} by the skew-symmetrization process. Let us first recall some results from the above-mentioned reference.

\begin{defi}
A compatible Lie algebra is a triple $(\mathfrak{g}, [~,~]_1, [~,~]_2)$ consists of a vector space $\mathfrak{g}$ together with two Lie brackets $[~,~]_1$ and $[~,~]_2$ satisfying the compatibility
\begin{align*}
[x,[y,z]_1]_2 + [y,[z,x]_1]_2 + [z,[x, y]_1]_2 + [x,[y,z]_2]_1 + [y,[z,x]_2]_1 + [z,[x, y]_2]_1 = 0, \text{ x, y, z } \in \mathfrak{g}.
\end{align*}
\end{defi}

\begin{defi}
Let $(\mathfrak{g}, [~,~]_1, [~,~]_2)$ be a compatible Lie algebra. A compatible $\mathfrak{g}$-representation is a triple $(V, \rho_1, \rho_2)$, where $(V, \rho_1)$ is a representation of the Lie algebra $(\mathfrak{g}, [~,~]_1)$ and $(V, \rho_2)$ is a representation of the Lie algebra $(\mathfrak{g}, [~,~]_2)$ satisfying additionally
\begin{align*}
\rho_2 ([x, y]_1) + \rho_1 ([x, y]_2) = \rho_1 (x) \rho_2 (y) - \rho_1 (y) \rho_2 (x) + \rho_2 (x) \rho_1 (y) - \rho_2 (y) \rho_1 (x),~\text{ for } x, y \in \mathfrak{g}.
\end{align*}
\end{defi}

Given a compatible Lie algebra $(\mathfrak{g}, [~,~]_1, [~,~]_2)$ and a compatible $\mathfrak{g}$-representation $(V, \rho_1, \rho_2)$, there is a cochain complex $\{ C^\ast_{cL} (\mathfrak{g}, V), \delta_{cL})$ defined as follows:
\begin{align*}
C^0_{cL} (\mathfrak{g}, V) :=~& \{ v \in V |~ \rho_1 (x) (v) = \rho_2 (x)(v), \forall x \in \mathfrak{g} \}, ~~ \text{ and } \\
C^n_{cL}  (\mathfrak{g}, V) :=~&  \underbrace{C^n_L  (\mathfrak{g}, V) \oplus \cdots \oplus  C^n_L (\mathfrak{g}, V)}_{n \text{ times}}, \text{ for } n \geq 1,
\end{align*}
where $C^n_L  (\mathfrak{g}, V) = \mathrm{Hom}(\wedge^n \mathfrak{g}, V).$ The coboundary operator $\delta_{cL} : C^n_{cL}  (\mathfrak{g}, V) \rightarrow C^{n+1}_{cL}  (\mathfrak{g}, V)$ is given by
\begin{align*}
\delta_{cL} (v) (x) := \rho_1 (x) (v) = \rho_2 (x) (v), ~ \text{ for } v \in C^0_{cL} (\mathfrak{g}, V), x \in \mathfrak{g},\\
\delta_{cL} (f_1, \ldots, f_n ) := ( \delta_{1L} f_1, \ldots, \underbrace{\delta_{1L} f_i + \delta_{2L} f_{i-1}}_{i\text{-th place}}, \ldots, \delta_{2L} f_n),
\end{align*}
for $(f_1, \ldots, f_n ) \in C^n_{cL} (\mathfrak{g}, V)$. Here $\delta_{1L}$ (resp. $\delta_{2L}$) is the coboundary operator for the Chevalley-Eilenberg cohomology of the Lie algebra $(\mathfrak{g}, [~,~]_1)$ with coefficients in $(V, \rho_1)$ ~~ (resp. of the Lie algebra $(\mathfrak{g}, [~,~]_2)$ with coefficients in $(V, \rho_2)$ ). The cohomology of the cochain complex $\{ C^\ast_{cL} (\mathfrak{g}, V), \delta_{cL} \}$ is called the cohomology of the compatible Lie algebra $(\mathfrak{g}, [~,~]_1, [~,~]_2)$ with coefficients in the compatible $\mathfrak{g}$-representation $(V, \rho_1, \rho_2)$, and they are denoted by $H^\ast_{cL} (\mathfrak{g}, V).$

\medskip

It is a well-known fact that the standard skew-symmetrization gives rise to a map from the Hochschild cochain complex of an associative algebra to the Chevalley-Eilenberg cohomology complex of the corresponding skew-symmetrized Lie algebra. This can be generalized to compatible algebras as well.

Let $(A, \mu_1, \mu_2)$ be a compatible associative algebra. Then it can be easily checked that the triple $(A, [~,~]_1, [~,~]_2)$
 is a compatible Lie algebra, where
 \begin{align*}
 [a, b]_1 := a \cdot_1 b - b \cdot_1 a ~~~~ \text{ and } ~~~~ [a, b]_2 := a \cdot_2 b - b \cdot_2 a, \text{ for } a, b \in A.
 \end{align*}
 We denote this compatible Lie algebra by $A_s$. Moreover, if $M$ is a compatible associative $A$-bimodule, then $M$ can be regarded as a compatible $A_s$-representation by
 \begin{align*}
 \rho_1 (a)(m) := a \cdot_1 m - m \cdot_1 a  ~~~~ \text{ and } ~~~~ \rho_2 (a)(m) := a \cdot_2 m - m \cdot_2 a, ~ \text{ for } a \in A_s, m \in M .
 \end{align*}
This compatible $A_s$-representation is denoted by $M_s$. With these notations, we have the following.

\begin{thm}
Let $A$ be a compatible associative algebra and $M$ be a compatible $A$-bimodule. Then the standard skew-symmetrization
\begin{align*}
\Phi_n : C^n_c (A,& M) \rightarrow C^n_{cL} (A_s, M_s),~ (f_1, \ldots, f_n) \mapsto (\overline{f_1}, \ldots, \overline{f_n}),~ \text{ for } n \geq 0, \\~~~ &\text{ where }
\overline{f_i} ( a_1, \ldots, a_n ) = \sum_{\sigma \in \mathbb{S}_n} (-1)^\sigma~ f_i (a_{\sigma (1)}, \ldots, a_{\sigma(n)}),~~~ i = 1, \ldots, n,
\end{align*}
gives rise a morphism of cochain complexes. Hence it induces a map $\Phi_\ast : H^\ast_c (A, M) \rightarrow H^\ast_{cL} (A_s, M_s)$.
\end{thm}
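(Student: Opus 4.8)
The strategy is to reduce the statement to the classical fact that, for a single associative algebra, skew-symmetrization intertwines the Hochschild differential with the Chevalley--Eilenberg differential of the skew-symmetrized Lie algebra, and then to observe that both $\delta_c$ and $\delta_{cL}$ are assembled from a ``$\mu_1$-part'' and a ``$\mu_2$-part'' in exactly the same block pattern while $\Phi_n$ acts diagonally on the components. First I would record the single-algebra fact in the precise form needed: for the associative algebra $(A,\mu_1)$, the $(A,\mu_1)$-bimodule $(M,l_1,r_1)$, the skew-symmetrized Lie algebra $(A,[~,~]_1)$ and the representation $(M,\rho_1)$ with $\rho_1(a)(m)=a\cdot_1 m - m\cdot_1 a$, the bar map $f\mapsto \overline{f}$, where $\overline{f}(a_1,\dots,a_n)=\sum_{\sigma\in\mathbb{S}_n}(-1)^\sigma f(a_{\sigma(1)},\dots,a_{\sigma(n)})$, satisfies
\[
\overline{\delta_1 f}=\delta_{1L}\,\overline{f},\qquad f\in C^n(A,M),
\]
and likewise $\overline{\delta_2 f}=\delta_{2L}\,\overline{f}$ with $\mu_1,[~,~]_1,\rho_1$ replaced by $\mu_2,[~,~]_2,\rho_2$. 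This is exactly the classical Hochschild-to-Chevalley--Eilenberg comparison: after antisymmetrizing, the two extreme terms $a_1\cdot_1 f(a_2,\dots,a_{n+1})$ and $(-1)^{n+1} f(a_1,\dots,a_n)\cdot_1 a_{n+1}$ of \eqref{hoch-diff} combine into the $\rho_1$-terms of the Chevalley--Eilenberg differential (since $\rho_1(a)(m)=a\cdot_1 m-m\cdot_1 a$), while the inner terms $(-1)^i f(\dots,a_i\cdot_1 a_{i+1},\dots)$ combine into the bracket terms (since $[a,b]_1=a\cdot_1 b-b\cdot_1 a$); the signs match because antisymmetrization over $\mathbb{S}_{n+1}$ groups the relevant adjacent transpositions. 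I would do this sign verification once and then use it as a black box.

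Next I would check the chain-map identity $\Phi_{\bullet}\circ\delta_c=\delta_{cL}\circ\Phi_{\bullet}$ degree by degree. In degree $0$ the subspaces $C^0_c(A,M)$ and $C^0_{cL}(A_s,M_s)$ literally coincide (both equal $\{m\in M : a\cdot_1 m-m\cdot_1 a = a\cdot_2 m-m\cdot_2 a\}$, because $\rho_i(a)(m)=a\cdot_i m-m\cdot_i a$), so $\Phi_0=\mathrm{id}$; and for such $m$, using \eqref{dc-1}, $\delta_c(m)(a)=a\cdot_1 m-m\cdot_1 a=\rho_1(a)(m)=\delta_{cL}(m)(a)$, with $\Phi_1$ acting as the identity on $C^1$. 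In degree $n\ge 1$, take $(f_1,\dots,f_n)\in C^n_c(A,M)$; applying $\Phi_{n+1}$ to the component-wise formula \eqref{dc-2}, using linearity of the bar map and the single-algebra fact, gives
\[
\Phi_{n+1}\big(\delta_c(f_1,\dots,f_n)\big)=\big(\delta_{1L}\overline{f_1},\ \delta_{1L}\overline{f_2}+\delta_{2L}\overline{f_1},\ \dots,\ \delta_{2L}\overline{f_n}\big),
\]
which is precisely $\delta_{cL}(\overline{f_1},\dots,\overline{f_n})=\delta_{cL}\big(\Phi_n(f_1,\dots,f_n)\big)$ by the defining formula for $\delta_{cL}$. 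Hence $\Phi_{\bullet}$ is a morphism of cochain complexes, so it passes to cohomology and yields $\Phi_\ast:H^\ast_c(A,M)\to H^\ast_{cL}(A_s,M_s)$.

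The only non-formal ingredient is the single-algebra identity $\overline{\delta_i f}=\delta_{iL}\overline{f}$, and that is where the sign bookkeeping lives; everything else is purely a matter of transporting this identity along two parallel copies, since $\delta_c$ and $\delta_{cL}$ have identical block shapes and $\Phi_n$ is diagonal. Thus I expect the main (and essentially the only) obstacle to be the careful sign check in the classical Hochschild-to-Chevalley--Eilenberg comparison; once that is in hand, the compatible case is immediate.
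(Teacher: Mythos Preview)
Your proposal is correct. The paper states this theorem without proof, so there is nothing to compare against; the argument you outline---reducing to the classical identity $\overline{\delta_i f}=\delta_{iL}\,\overline{f}$ for each $i=1,2$ and then observing that $\delta_c$, $\delta_{cL}$ share the same block pattern while $\Phi_n$ acts diagonally---is exactly the natural and presumably intended proof.
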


\subsection{Abelian extensions of compatible associative algebras}
In this subsection, we generalize the classical abelian extensions of associative algebras \cite{loday-book} to the context of compatible associative algebras. We show that equivalence classes of abelian extensions of a compatible associative algebra are characterized by the second cohomology group of the compatible associative algebra.

Let $(A, \mu_1, \mu_2)$ be a compatible associative algebra and $M$ be any vector space. Note that $M$ can also be considered as a compatible associative algebra with trivial associative products.

\begin{defi}
An abelian extension of a compatible associative algebra $A$ be a vector space $M$ is an exact sequence
\begin{align}\label{abel-exact-seq}
\xymatrix{
0 \ar[r] &  (M, 0, 0) \ar[r]^{i} & (B, \mu_1^B, \mu_2^B) \ar[r]^{j} & (A, \mu_1, \mu_2) \ar[r] & 0
}
\end{align}
of compatible associative algebras.
\end{defi}

It is important to note that an abelian extension is the whole exact sequence (including the structure maps $i$ and $j$), not just the compatible associative algebra $(B, \mu_1^B, \mu_2^B)$.

Let $s: A \rightarrow B$ be any map satisfying $j \circ s = \mathrm{id}_A$. Such a map always exist. In this case, $s$ is called a section of the map $j$. A section $s$ induces a compatible $A$-bimodule structure on $M$ given by
\begin{align*}
\begin{cases}
a \cdot_1 m = \mu_1^B ( s(a), i(m)), \\
 m \cdot_1 a = \mu_1^B (i(m), s(a)),
 \end{cases} \qquad \quad
 \begin{cases}
a \cdot_2 m = \mu_2^B ( s(a), i(m)), \\
 m \cdot_2 a = \mu_2^B (i(m), s(a)),
 \end{cases}
\end{align*}
for $a \in A$ and $m \in M$. One can easily check that this compatible $A$-bimodule structure on $M$ is independent of the choice of $s$.

\begin{defi}
Two abelian extensions (two horizontal rows in the below diagram) of a compatible associative algebra $A$ by a vector space $M$ are said to be equivalent if there is a compatible associative algebra morphism $\phi : B \rightarrow B'$ making the following diagram commutative
\[
\xymatrix{
0 \ar[r] &  (M, 0, 0)  \ar[r]^{i} \ar@{=}[d] & (B, \mu_1^B, \mu_2^B) \ar[d]^{\phi} \ar[r]^{j} & (A, \mu_1, \mu_2) \ar[r]  \ar@{=}[d]  & 0 \\
0 \ar[r] &  (M, 0, 0) \ar[r]_{i'} & (B', \mu_1^{B'}, \mu_2^{B'}) \ar[r]_{j'} & (A, \mu_1, \mu_2) \ar[r] & 0.
}
\]
\end{defi}

Let $A$ be a compatible associative algebra and $M$ be a given compatible $A$-bimodule. We denote by Ext$(A,M)$ the set of equivalence classes of abelian extensions of $A$ by the vector space $M$ so that the induced compatible $A$-bimodule structure on $M$ is the prescribed one.

Then we have the following which generalizes the classical result \cite{loday-book} about abelian extensions.

\begin{thm}
Let $A$ be a compatible associative algebra and $M$ be a compatible $A$-bimodule. Then there is a bijection between $\mathrm{Ext}(A,M)$ and the second cohomology group $H^2_c (A, M).$
\end{thm}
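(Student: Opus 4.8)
The plan is to mimic the classical correspondence between abelian extensions and second Hochschild cohomology, but now tracking two multiplications simultaneously. Given an abelian extension \eqref{abel-exact-seq} together with a section $s : A \to B$ of $j$, I would define, for $k = 1, 2$, a bilinear map $\psi_k : A \otimes A \to M$ by the rule $i(\psi_k(a,b)) = \mu_k^B(s(a), s(b)) - s(\mu_k(a,b))$; this is well-defined because $j$ of the right-hand side vanishes, so it lies in $\ker j = \mathrm{im}\, i$. The first step is to verify that the pair $(\psi_1, \psi_2)$ is a $2$-cocycle in $C^2_c(A,M) = C^2(A,M) \oplus C^2(A,M)$, i.e. that $\delta_c(\psi_1, \psi_2) = (\delta_1 \psi_1,\, \delta_1 \psi_2 + \delta_2 \psi_1,\, \delta_2 \psi_2) = 0$. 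The vanishing of $\delta_1 \psi_1$ and $\delta_2 \psi_2$ is exactly the classical computation: associativity of $\mu_1^B$ (resp. $\mu_2^B$), after applying $i^{-1}$ and using the induced bimodule structure, yields the Hochschild $2$-cocycle identity. The new ingredient is $\delta_1 \psi_2 + \delta_2 \psi_1 = 0$, which I expect to fall out of the compatibility condition $(a \cdot_1 b)\cdot_2 c + (a\cdot_2 b)\cdot_1 c = a\cdot_1(b\cdot_2 c) + a\cdot_2(b\cdot_1 c)$ applied to $s(a), s(b), s(c)$ in $B$: expanding each $\mu_k^B(s(-),s(-))$ as $s(\mu_k(-,-)) + i(\psi_k(-,-))$ and collecting the mixed terms should produce precisely $\delta_1\psi_2 + \delta_2\psi_1$ evaluated on $(a,b,c)$, with the purely $s$-level terms cancelling by the compatibility in $A$.

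Next I would check that changing the section $s$ to another section $s'$ changes $(\psi_1,\psi_2)$ by a coboundary: writing $s' - s = i \circ \lambda$ for a linear map $\lambda : A \to M$, a direct computation gives $\psi_k' - \psi_k = \delta_k \lambda$ for $k=1,2$, hence $(\psi_1',\psi_2') - (\psi_1,\psi_2) = (\delta_1\lambda, \delta_1\lambda + \delta_2\lambda \cdot 0?...)$—more carefully, $(\psi_1' , \psi_2') - (\psi_1,\psi_2) = \delta_c(\lambda)$ where $\lambda \in C^1_c(A,M) = C^1(A,M)$ contributes $\delta_c(\lambda) = (\delta_1\lambda, \delta_2\lambda)$; this is consistent since $C^1_c$ has only one copy. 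Thus the class $[(\psi_1,\psi_2)] \in H^2_c(A,M)$ is independent of the section, giving a well-defined map from extensions to $H^2_c(A,M)$. Then I would show equivalent extensions give the same class: a morphism $\phi : B \to B'$ as in the equivalence diagram allows one to transport a section $s$ of $j$ to the section $\phi \circ s$ of $j'$, and since $\phi$ is a morphism of compatible associative algebras and restricts to the identity on $M$, the associated cocycles coincide. This yields a well-defined map $\mathrm{Ext}(A,M) \to H^2_c(A,M)$.

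For the inverse, given a $2$-cocycle $(\psi_1, \psi_2) \in Z^2_c(A,M)$, I would put a compatible associative algebra structure on $B := A \oplus M$ by
\begin{align*}
(a,m) \cdot_k (b,n) = \big( a \cdot_k b,\ a \cdot_k n + m \cdot_k b + \psi_k(a,b) \big), \quad k = 1,2,
\end{align*}
where the $A$-actions on $M$ are the given compatible bimodule actions. The cocycle conditions $\delta_k \psi_k = 0$ guarantee each $\cdot_k$ is associative (classical $f$-twisted semidirect product, as in the Example preceding Proposition \ref{semi-prop}), and the condition $\delta_1\psi_2 + \delta_2\psi_1 = 0$ guarantees the compatibility identity between $\cdot_1$ and $\cdot_2$ holds on $B$; this is the reverse of the computation in the first paragraph. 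With the obvious inclusion $i : M \to B$ and projection $j : B \to A$ one gets an abelian extension inducing the prescribed bimodule structure, and the section $a \mapsto (a,0)$ recovers the cocycle $(\psi_1,\psi_2)$. Cohomologous cocycles $(\psi_1,\psi_2)$ and $(\psi_1,\psi_2) + \delta_c(\lambda)$ give equivalent extensions via $\phi(a,m) = (a, m + \lambda(a))$, which one checks is a compatible associative algebra isomorphism fitting in the equivalence diagram. The two constructions are mutually inverse, establishing the bijection.

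The main obstacle I anticipate is the bookkeeping in the ``mixed'' identity $\delta_1\psi_2 + \delta_2\psi_1 = 0$ and its converse: one must carefully expand the compatibility relation for $\mu_1^B, \mu_2^B$ evaluated on lifted elements, separate the six resulting terms into those living in $A$ (which cancel by compatibility in $A$) and those living in $M$, and recognize the latter as exactly $(\delta_1\psi_2 + \delta_2\psi_1)(a,b,c)$ using the definitions \eqref{hoch-diff} of $\delta_1, \delta_2$ and the bimodule compatibilities \eqref{comp-bi1}--\eqref{comp-bi3}. Everything else is a routine adaptation of the classical Hochschild argument, carried out in parallel for the two products.
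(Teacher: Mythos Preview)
Your proposal is correct and follows essentially the same approach as the paper: extract a pair of Hochschild-type $2$-cocycles from a section of an abelian extension, and conversely build a twisted semidirect product on $A\oplus M$ from a $2$-cocycle $(\psi_1,\psi_2)$, then check these constructions are mutually inverse up to equivalence/cohomology. Your treatment is in fact more explicit than the paper's on the key new point---the mixed identity $\delta_1\psi_2+\delta_2\psi_1=0$ coming from the compatibility of $\mu_1^B,\mu_2^B$---which the paper handles only implicitly; the brief stumble over the form of $\delta_c(\lambda)$ you self-correct to $(\delta_1\lambda,\delta_2\lambda)$, which is indeed right since $C^1_c(A,M)=C^1(A,M)$.
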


\begin{proof}
Let $(f_1, f_2) \in Z^2_c (A, M)$ be a $2$-cocycle. Then it is easy to see that the direct sum $A \oplus M$ carries a compatible associative algebra structure given by
\begin{align*}
\mu_1^B ((a,m), (b, n)) :=~& ( a \cdot_1 b ,~ a \cdot_1 n + m \cdot_1 b + f_1 (a, b)), \\
\mu_2^B ((a,m), (b, n)) :=~& ( a \cdot_2 b ,~ a \cdot_2 n + m \cdot_2 b + f_2 (a, b)),
\end{align*}
for $(a, m), (b, n) \in A \oplus M$. Moreover, the exact sequence
\begin{align*}
\xymatrix{
0 \ar[r] &  (M, 0, 0) \ar[r]^{i} & (B, \mu_1^B, \mu_2^B) \ar[r]^{j} & (A, \mu_1, \mu_2) \ar[r] & 0
}
\end{align*}
defines an abelian extension, where $i (m) = (0, m)$ and $j (a, m) = a$. Suppose $(f_1', f_2') \in Z^2_c (A, M)$ is another $2$-cocycle cohomologous to $(f_1, f_2)$, and say,
\begin{align*}
(f_1, f_2) - (f_1', f_2') = \delta_c (g), ~\text{ for some } g \in C^1_c (A, M).
\end{align*}
Let $B' = A \oplus M$ be the abelian extension corresponding to the $2$-cocycle $(f_1', f_2')$. Then the two abelian extensions are equivalent and the equivalence is given by the compatible associative algebra map $\phi : B \rightarrow B'$, $(a,m) \mapsto (a, m + g (a))$. In other words, we obtain a well-defined map $H^2_c (A, M) \rightarrow \mathrm{Ext}(A,M).$

Conversely, let (\ref{abel-exact-seq}) be an abelian extension and $s: A \rightarrow B$ be any section of $j$. Then we may consider $B = A \oplus M$ and the maps $i, j, s$ are the obvious ones. Since $j$ is a compatible associative algebra map, we have
\begin{align*}
j \circ \mu_1^B ((a, 0), (b, 0)) = \mu_1 (a, b) ~~~ \text{ and } ~~~
j \circ \mu_2^B ((a, 0), (b, 0)) = \mu_2 (a, b), ~ \text{ for } a, b \in A.
\end{align*}
Hence it follows that
\begin{align*}
\mu_1^B ((a, 0), (b,0)) = (\mu_1 (a, b), f_1 (a, b)) ~~ \text{ and } ~~ \mu_2^B ((a, 0), (b,0)) = (\mu_2 (a, b), f_2 (a, b)),
\end{align*}
for some $f_1, f_2 : A^{\otimes 2} \rightarrow M$. Since $\mu_1^B, \mu_2^B$ defines a compatible associative structure on $B$, it follows that the pair $(f_1, f_2) \in C^2_c (A, M)$ is a $2$-cocycle in the cohomology of the compatible associative algebra $A$ with coefficients in the compatible $A$-bimodule $M$. It is left to the reader to verify that equivalent abelian extensions induce cohomologous $2$-cocycles (see \cite{loday-book} for the classical associative case). This shows that there is a well-defined map $\mathrm{Ext}(A,M) \rightarrow H^2_c (A, M)$.

Finally, the maps $H^2_c (A, M) \rightarrow \mathrm{Ext}(A,M)$ and $\mathrm{Ext}(A,M) \rightarrow H^2_c (A, M)$ constructed above are inverses to each other. Hence the proof.
\end{proof}

\section{Deformations of compatible associative algebras}\label{section:deformation}
In this section, we study various aspects of deformations of compatible associative algebras following the classical deformation theory of Gerstenhaber \cite{gers}.

\subsection{Linear deformations and Nijenhuis operators} In this subsection, we consider linear deformations of a compatible associative algebra $A$ and introduce Nijenhuis operators on $A$ that induce trivial linear deformations. We also introduce infinitesimal deformations of $A$ and show that equivalence classes of infinitesimal deformations are in one-to-one correspondence with the second cohomology group $H^2_c (A, A).$

Let $A= (A, \mu_1, \mu_2)$ be a compatible associative algebra.
\begin{defi}
A linear deformation of $A$ consists of two linear sums of the form
\begin{align*}
\mu_1^t = \mu_1 + t \omega_1 ~~~ \text{ and } ~~~ \mu_2^t = \mu_2 + t \omega_2, ~\text{ for some } \omega_1, \omega_2 \in C^2(A,A)
\end{align*}
which makes $(A, \mu_1^t, \mu_2^t)$ into a compatible associative algebra, for all values of $t$.
\end{defi}

In this case, we say that the pair $(\omega_1, \omega_2)$ generates a linear deformation of $A$.

\begin{ex}
The pair $(\mu_1, \mu_2)$ generates a linear deformation of $A$. It is called the `scaling'.
\end{ex}

Let $(\omega_1, \omega_2)$ generates a linear deformation of the compatible associative algebra $A$. It follows that $\mu_1^t = \mu_1 + t \omega_1$ and $\mu_2^t = \mu_2 + t \omega_2$ satisfy the following relations
\begin{align*}
[\mu_1^t, \mu_1^t]_\mathsf{G} = 0, \qquad  [\mu_2^t, \mu_2^t]_\mathsf{G} = 0 ~~~~ \text{ and } ~~~~ [\mu_1^t, \mu_2^t]_\mathsf{G} = 0.
\end{align*}
These relations are equivalent to the followings:
\begin{align}
[\mu_1, \omega_1]_\mathsf{G} = 0, \qquad [\mu_2, \omega_2]_\mathsf{G} = 0, \qquad [\mu_1, \omega_2]_\mathsf{G} + [\mu_2 , \omega_1]_\mathsf{G} = 0, \label{lin-f}\\
[\omega_1, \omega_1]_\mathsf{G} = 0, \qquad  [\omega_2, \omega_2]_\mathsf{G} = 0 \qquad \text{ and } \qquad [\omega_1, \omega_2 ]_\mathsf{G} = 0. \label{lin-s}
\end{align}
The three identities in (\ref{lin-f}) implies that
\begin{align*}
\delta_c (\omega_1, \omega_2) = 0,
\end{align*}
where $\delta_c$ is the coboundary operator for the cohomology of the compatible associative algebra $A$ with coefficients in itself. In other words, $(\omega_1, \omega_2) \in Z^2_c (A, A)$ is a $2$-cocycle.

On the other hand, the three conditions of (\ref{lin-s}) imply that the triple $(A, \omega_1, \omega_2)$ is a compatible associative algebra.

\begin{defi}
Let $(\omega_1, \omega_2)$ generates a linear deformation $(\mu_1^t, \mu_2^t)$ of the compatible associative algebra $A$, and $(\omega_1', \omega_2')$ generates another linear deformation $(\mu_1^{'t}, \mu_2^{'t})$ of $A$. They are said to be equivalent if there is a linear map $N : A \rightarrow A$ such that
\begin{align*}
\mathrm{id} + t N : (A, \mu_1^{t}, \mu_2^{t}) \rightarrow (A, \mu_1^{'t}, \mu_2^{'t})
\end{align*}
is a morphism of compatible associative algebras.
\end{defi}

The condition in the above definition implies that
\begin{align*}
(\mathrm{id} + tN)\circ \mu_i^{t} (a, b) = \mu_i^{'t} ( a + t Na, b + tNb),~\text{ for } i = 1,2 \text{ and } a, b \in A.
\end{align*}
If we write down explicitly, we get that
\begin{align}
\omega_i ( a, b ) - \omega_i' (a, b) =~& a \cdot_i N(b) + N(a) \cdot_i b - N ( a \cdot_i b), \label{lin-eq-1}\\
N\omega_i (a, b) =~& \omega_i' ( a, Nb) + \omega_i' (Na, b) + N(a) \cdot_i N(b), \label{lin-eq-2}\\
\omega_i' (Na, Nb) =~& 0, \label{lin-eq-3}
\end{align}
for $i=1,2$ and $a, b \in A$. From the two identities (for $i=1,2$) of (\ref{lin-eq-1}), we simply get that
\begin{align}\label{d-f}
(\omega_1, \omega_2) - (\omega_1', \omega_2') = \delta_c (N),
\end{align}
where we consider $N$ as an element in $C^1_c (A, A).$ As a summary of the previous discussions, we get the following.

\begin{thm}\label{thm-lin-def}
There is a map from the set of equivalence classes of linear deformations of a compatible associative algebra $A$ to the second cohomology group $H^2_c (A,A).$
\end{thm}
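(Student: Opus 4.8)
The plan is to show that the assignment sending an equivalence class of linear deformations to the cohomology class of its generating $2$-cocycle is well-defined. The preceding discussion already does most of the work: if $(\omega_1, \omega_2)$ generates a linear deformation of $A$, then the identities in \eqref{lin-f} are exactly the statement that $\delta_c(\omega_1, \omega_2) = 0$, so $(\omega_1, \omega_2) \in Z^2_c(A, A)$ determines a class $[(\omega_1, \omega_2)] \in H^2_c(A, A)$. Thus the only thing that requires proof is independence of this class under the equivalence relation on linear deformations.

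First I would take two equivalent linear deformations, generated by $(\omega_1, \omega_2)$ and $(\omega_1', \omega_2')$ respectively, with equivalence implemented by $\mathrm{id} + tN$ for some linear map $N : A \to A$. Expanding the morphism condition $(\mathrm{id} + tN) \circ \mu_i^t = \mu_i^{'t} \circ ((\mathrm{id}+tN) \otimes (\mathrm{id}+tN))$ in powers of $t$ gives the three families of identities \eqref{lin-eq-1}, \eqref{lin-eq-2}, \eqref{lin-eq-3} for $i = 1, 2$. I would then focus only on the coefficient of $t$, namely \eqref{lin-eq-1}: for each $i$,
\[
\omega_i(a,b) - \omega_i'(a,b) = N(a) \cdot_i b + a \cdot_i N(b) - N(a \cdot_i b).
\]
The right-hand side is, up to sign, the Hochschild coboundary $\delta_i N$ of $N \in C^1(A,A) = \mathrm{Hom}(A, A)$ with respect to $\mu_i$; concretely $(\delta_i N)(a,b) = a \cdot_i N(b) - N(a\cdot_i b) + N(a)\cdot_i b$, so \eqref{lin-eq-1} says $\omega_i - \omega_i' = \delta_i N$. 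Applying the definition \eqref{dc-2} of $\delta_c$ on $C^1_c(A,A)$, which sends $N$ to $(\delta_1 N, \delta_2 N)$, I conclude
\[
(\omega_1, \omega_2) - (\omega_1', \omega_2') = (\delta_1 N, \delta_2 N) = \delta_c(N),
\]
which is precisely \eqref{d-f}. Hence $(\omega_1, \omega_2)$ and $(\omega_1', \omega_2')$ are cohomologous in $C^\ast_c(A,A)$, so they define the same class in $H^2_c(A,A)$, and the map is well-defined. This proves the theorem.

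There is essentially no serious obstacle here: the construction of $\delta_c$ and the expansion of the equivalence condition have already been carried out in the text, so the argument is a matter of reading off the degree-one term and recognizing it as $\delta_c(N)$. The one point deserving a sentence of care is that the higher-order identities \eqref{lin-eq-2} and \eqref{lin-eq-3} are automatically consistent and impose no obstruction to well-definedness — they constrain the relationship between $N$, $\omega_i$ and $\omega_i'$ but are not needed to conclude cohomologousness — so it is legitimate to use only \eqref{lin-eq-1}. (One should note that this map need not be injective or surjective in general, which is why the statement asserts only the existence of a map rather than a bijection; the obstruction-theoretic refinement, identifying infinitesimal deformations with $H^2_c(A,A)$, is taken up separately.)
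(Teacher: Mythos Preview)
Your proposal is correct and follows essentially the same approach as the paper: the paper presents the theorem as a summary of the preceding discussion, which shows (i) that the generating pair $(\omega_1,\omega_2)$ of any linear deformation is a $2$-cocycle via \eqref{lin-f}, and (ii) that equivalent deformations yield cohomologous cocycles via \eqref{lin-eq-1}, giving \eqref{d-f}. Your write-up recapitulates exactly these two steps, with the same use of only the degree-one identity \eqref{lin-eq-1} and the identification $\delta_c(N) = (\delta_1 N, \delta_2 N)$.
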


\medskip

Let us now discuss trivial linear deformations of a compatible associative algebra $A$.

\begin{defi}
A linear deformation generated by $(\omega_1, \omega_2)$ is said to be trivial if the deformation is equivalent to the undeformed one (i.e., generated by $(\omega_1', \omega_2') = (0,0))$.
\end{defi}

Thus, it follows from (\ref{lin-eq-1})-(\ref{lin-eq-3}) that a linear deformation generated by $(\omega_1, \omega_2)$ is trivial if there is a linear map $N : A \rightarrow A$ satisfying
\begin{align}
\omega_i (a, b) =~& a \cdot_i N(b) + N(a) \cdot_i b - N ( a \cdot_i b), \label{lin-nij-1}\\
N \omega_i (a, b) =~& N(a) \cdot_i N(b), ~ \text{ for } i=1,2 \text{ and } a, b \in A. \label{lin-nij-2}
\end{align}

These two identities motivate us to introduce the following definition.

\begin{defi}
Let $A = (A, \mu_1, \mu_2)$ be a compatible associative algebra. A linear map $N: A \rightarrow A$ is said to be a Nijenhuis operator on $A$ if $N$ is a Nijenhuis operator for both the associative products $\mu_1$ and $\mu_2$. In other words,
\begin{align*}
N(a) \cdot_i N(b) = N \big( a \cdot_i N(b) + N(a) \cdot_i b - N ( a \cdot_i b) \big),~\text{ for } i=1,2 \text{ and } a, b \in A.
\end{align*}
\end{defi}

It follows from the identities (\ref{lin-nij-1}) and (\ref{lin-nij-2}) that a trivial linear deformation of a compatible associative algebra $A$ induces a Nijenhuis operator on $A$. The converse is also true which is stated in the next proposition.

\begin{pro}
Let $N: A \rightarrow A$ be a Nijenhuis operator on a compatible associative algebra $A$. Then $N$ induces a trivial linear deformation of $A$ generated by $(\omega_1, \omega_2)$, where
\begin{align*}
\omega_i (a, b) := a \cdot_i N(b) + N(a) \cdot_i b - N ( a \cdot_i b),~\text{ for } i=1,2 \text{ and } a, b \in A.
\end{align*}
\end{pro}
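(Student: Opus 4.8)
The plan is to verify directly that the pair $(\omega_1, \omega_2)$ defined by $\omega_i(a,b) = a \cdot_i N(b) + N(a) \cdot_i b - N(a \cdot_i b)$ satisfies the conditions (\ref{lin-f}) and (\ref{lin-s}) characterizing a linear deformation, and then exhibit the equivalence with the trivial deformation using the very same map $N$. The key observation to exploit is that $N$ is a Nijenhuis operator for each of $\mu_1$ and $\mu_2$ individually, so by the classical theory of Nijenhuis operators on associative algebras (recalled in the Example following the definition of compatible associative algebras), each $\mu_i^t = \mu_i + t\omega_i$ is an associative product; this immediately gives $[\mu_i, \omega_i]_{\mathsf G} = 0$ and $[\omega_i, \omega_i]_{\mathsf G} = 0$ for $i=1,2$, which are four of the six required identities.

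First I would record that $\omega_i = \mu_{i,N}$ in the notation of the Nijenhuis example, so that $(A, \mu_i, \omega_i)$ is a compatible associative algebra for each $i$; equivalently $\mu_i^t$ is associative for all $t$, which unpacks to $[\mu_i,\omega_i]_{\mathsf G}=0$ and $[\omega_i,\omega_i]_{\mathsf G}=0$. Next I would establish the two "cross" identities $[\mu_1,\omega_2]_{\mathsf G} + [\mu_2,\omega_1]_{\mathsf G} = 0$ and $[\omega_1,\omega_2]_{\mathsf G} = 0$. For these I would consider the associative algebra $(A, \mu_1 + \mu_2)$ — which is associative since $(\mu_1,\mu_2)$ is a compatible structure — and check that $N$ is a Nijenhuis operator for $\mu_1 + \mu_2$ as well: indeed the Nijenhuis identity for $\mu_1 + \mu_2$ is just the sum of the Nijenhuis identities for $\mu_1$ and $\mu_2$. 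Hence $(\mu_1+\mu_2)_N = (\mu_1+\mu_2) + t(\omega_1+\omega_2)$ is associative, giving $[\mu_1+\mu_2, \omega_1+\omega_2]_{\mathsf G}=0$ and $[\omega_1+\omega_2,\omega_1+\omega_2]_{\mathsf G}=0$. Expanding these by bilinearity of the Gerstenhaber bracket and subtracting off the four identities already known yields precisely $[\mu_1,\omega_2]_{\mathsf G}+[\mu_2,\omega_1]_{\mathsf G}=0$ and $2[\omega_1,\omega_2]_{\mathsf G}=0$, and since we are in characteristic $0$ this finishes the verification that $(\omega_1,\omega_2)$ generates a linear deformation.

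It then remains to show this deformation is trivial, i.e., equivalent to the one generated by $(0,0)$. By definition of equivalence, I must check that $\mathrm{id} + tN : (A,\mu_i^t, -) \to (A, \mu_i, -)$ is a morphism of compatible associative algebras, which by the computation preceding (\ref{lin-eq-1})--(\ref{lin-eq-3}) (applied with $\omega_i' = 0$) amounts to verifying, for $i=1,2$, that $\omega_i(a,b) = a\cdot_i N(b) + N(a)\cdot_i b - N(a\cdot_i b)$ and $N\omega_i(a,b) = N(a)\cdot_i N(b)$. The first holds by the very definition of $\omega_i$, and the second is exactly the Nijenhuis identity for $\mu_i$. (The third condition $\omega_i'(Na,Nb)=0$ is vacuous since $\omega_i'=0$.) I expect no serious obstacle here: the only mild subtlety is making sure the cross-term identities are extracted cleanly, which is handled by the additivity trick with $\mu_1+\mu_2$ rather than a brute-force Gerstenhaber-bracket computation; everything else is a direct transcription of the single-operator Nijenhuis theory applied twice.
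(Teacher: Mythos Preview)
Your proposal is correct, and the triviality step (verifying (\ref{lin-nij-1}) and (\ref{lin-nij-2})) is exactly what the paper does. However, your route to establishing that $(\omega_1,\omega_2)$ generates a linear deformation is different from the paper's. The paper observes that $(\omega_1,\omega_2)=\delta_c(N)$ as an element of $C^2_c(A,A)$, so the three identities in (\ref{lin-f}) follow in one stroke from $(\delta_c)^2=0$; for (\ref{lin-s}) it simply asserts that $(A,(\mu_1)_N,(\mu_2)_N)$ is a compatible associative algebra. You instead treat each $\mu_i$ separately via the classical single-operator Nijenhuis theory to get four of the six identities, and then use the additivity trick---$N$ is also Nijenhuis for $\mu_1+\mu_2$, and $(\mu_1+\mu_2)_N=\omega_1+\omega_2$---to extract the two cross-terms by bilinearity. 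Your approach is more elementary (it never invokes the coboundary $\delta_c$) and has the merit of actually \emph{proving} the compatibility $[\omega_1,\omega_2]_{\mathsf G}=0$ that the paper takes for granted; the paper's approach, on the other hand, packages (\ref{lin-f}) more cleanly via the cohomology formalism developed earlier.
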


\begin{proof}

First observe that $(\omega_1, \omega_2) = \delta_c (N)$. Therefore, we have
\begin{align*}
\delta_c (\omega_1, \omega_2) = 0.
\end{align*}
This is equivalent to the conditions in (\ref{lin-f}). Moreover, since $N$ is a Nijenhuis operator on the compatible associative algebra $(A, \mu_1, \mu_2)$, it follows that $(A, \omega_1 = (\mu_1)_N, \omega_2 = (\mu_2)_N)$ is a compatible associative algebra. In other words, the conditions in (\ref{lin-s}) hold. This implies that $(\omega_1, \omega_2)$ generates a linear deformation of the compatible associative algebra $A$.

Finally, this linear deformation obviously satisfies the conditions in (\ref{lin-nij-1}) and (\ref{lin-nij-2}) which implies that the linear deformation is trivial.
\end{proof}

\medskip

In Theorem \ref{thm-lin-def}, we have seen that there is a map
\begin{align}\label{lin-sim}
(\text{linear deformations of } A)/ \sim  ~ \rightarrow ~ H^2_c(A,A)
\end{align}
from the set of equivalence classes of linear deformations of a compatible associative algebra $A$ to the second cohomology group $H^2_c (A, A).$ To generalize the map (\ref{lin-sim}) into certain isomorphism, we introduce the notion of infinitesimal deformations of a compatible associative algebra $A$.

\begin{defi}
An infinitesimal deformation of a compatible associative algebra $A$ is a linear deformation of $A$ over the base $\mathbb{K}[[t]]/ (t^2)$, the ring of dual numbers.
\end{defi}

One can also define equivalences between two infinitesimal deformations of $A$. Any $2$-cocycle $(\omega_1, \omega_2) \in Z^2_c (A,A)$ induces an infinitesimal deformation $(\mu_1^t = \mu_1 + t \omega_1, \mu_2^t = \mu_2 + t \omega_2)$ of $A$. Moreover, cohomologous $2$-cocycles induce equivalent infinitesimal deformations. More precisely, let $(\omega_1', \omega_2') \in Z^2_c (A,A)$ be another $2$-cocycle and cohomologous to $(\omega_1, \omega_2)$, say
\begin{align*}
(\omega_1, \omega_2) - (\omega_1', \omega_2') = \delta_c h, ~\text{ for some } h \in C^1_c (A, A).
\end{align*}
Then the infinitesimal deformations $(\mu_1^t = \mu_1 + t \omega_1, \mu_2^t = \mu_2 + t \omega_2)$ and $(\mu_1^{'t} = \mu_1 + t \omega'_1, \mu_2^{'t} = \mu_2 + t \omega'_2)$ are equivalent and an equivalence is given by $\mathrm{id} + t h : (A, \mu_1^t, \mu_2^t) \rightarrow (A, \mu_1^{'t}, \mu_2^{'t})$. Hence, summarizing this fact with Theorem \ref{thm-lin-def}, we obtain the following.

\begin{thm}
Let $A$ be a compatible associative algebra. Then there is a one-to-one correspondence between the set of equivalence classes of infinitesimal deformations of $A$ and the second cohomology group $H^2_c (A,A).$
\end{thm}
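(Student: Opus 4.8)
The plan is to establish a bijection between equivalence classes of infinitesimal deformations of $A$ and $H^2_c(A,A)$ by constructing maps in both directions and checking they are mutually inverse. First I would show that an infinitesimal deformation $(\mu_1^t = \mu_1 + t\omega_1, \mu_2^t = \mu_2 + t\omega_2)$ over $\mathbb{K}[[t]]/(t^2)$ forces $(\omega_1,\omega_2)$ to be a $2$-cocycle: expanding the three defining identities $[\mu_i^t,\mu_i^t]_\mathsf{G}=0$ and $[\mu_1^t,\mu_2^t]_\mathsf{G}=0$ modulo $t^2$, the $t^0$-terms vanish because $(\mu_1,\mu_2)$ is already a compatible associative structure, and the $t^1$-terms give exactly the three equations in \eqref{lin-f}, which by the definition of $\delta_c$ in \eqref{diff-gers} say precisely $\delta_c(\omega_1,\omega_2) = 0$. (Note that the $t^2$-terms, i.e.\ the equations in \eqref{lin-s}, are automatically killed in the dual-numbers setting, which is the whole point of passing from linear to infinitesimal deformations.) This produces a well-defined assignment sending an infinitesimal deformation to the class $[(\omega_1,\omega_2)] \in H^2_c(A,A)$.

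Next I would check this assignment descends to equivalence classes and is injective on them. Two infinitesimal deformations are equivalent via $\mathrm{id} + th$ for some $h \in C^1_c(A,A)$; writing out the morphism condition modulo $t^2$ and reading off the $t^1$-term yields $(\omega_1,\omega_2) - (\omega_1',\omega_2') = \delta_c(h)$ exactly as in \eqref{d-f} restricted to the infinitesimal level (here the higher-order obstructions \eqref{lin-eq-2}--\eqref{lin-eq-3} disappear because $t^2 = 0$). Hence equivalent deformations give cohomologous cocycles, and conversely cohomologous cocycles give equivalent deformations — the latter is already spelled out in the paragraph preceding the theorem, via the equivalence $\mathrm{id} + th$. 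For the reverse map, given a $2$-cocycle $(\omega_1,\omega_2) \in Z^2_c(A,A)$ one sets $\mu_i^t := \mu_i + t\omega_i$; since $t^2 = 0$, the relations \eqref{lin-f} (equivalently $\delta_c(\omega_1,\omega_2)=0$) are exactly what is needed for $(A,\mu_1^t,\mu_2^t)$ to be a compatible associative algebra over $\mathbb{K}[[t]]/(t^2)$, so this is a genuine infinitesimal deformation.

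Finally I would verify that the two constructions are inverse to each other: starting from a cocycle, building the deformation $\mu_i + t\omega_i$, and extracting its linear term returns the same cocycle; starting from a deformation, extracting its cocycle and rebuilding returns the same deformation (up to the identity equivalence). Combined with the compatibility with equivalence relations established above, this gives the desired one-to-one correspondence. I do not anticipate a serious obstacle here: the essential work has already been done in Theorem \ref{thm-lin-def} and in the discussion immediately before the statement, and the only genuinely new point is the routine observation that over the dual numbers the quadratic conditions \eqref{lin-s} and the higher equivalence conditions \eqref{lin-eq-2}--\eqref{lin-eq-3} are vacuous, so that the cocycle condition becomes both necessary and sufficient. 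If anything needs care, it is bookkeeping the reduction modulo $t^2$ consistently across all three structure equations and both the deformation and equivalence sides.
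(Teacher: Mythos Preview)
Your proposal is correct and follows essentially the same approach as the paper: the paper combines Theorem \ref{thm-lin-def} (giving the well-defined map from equivalence classes to $H^2_c(A,A)$) with the paragraph immediately preceding the statement (giving the inverse map from cocycles to infinitesimal deformations, and showing cohomologous cocycles yield equivalent deformations), and your plan spells out exactly these steps together with the key observation that over $\mathbb{K}[[t]]/(t^2)$ the quadratic constraints \eqref{lin-s} and the higher-order equivalence conditions \eqref{lin-eq-2}--\eqref{lin-eq-3} become vacuous.
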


\subsection{Formal deformations and rigidity}

In this subsection, we consider formal deformations of a compatible associative algebra $A$. We show that the vanishing of the second cohomology group $H^2_c (A,A)$ implies that $A$ is rigid.

Let $A = (A, \mu_1, \mu_2)$ be a compatible associative algebra. Consider the space $A[[t]]$ of formal power series in $t$ with coefficients from $A$. Then $A[[t]]$ is a $\mathbb{K}[[t]]$-module.

\begin{defi}
A formal deformation of the compatible associative algebra $A$ consists of two formal power series
\begin{align*}
\mu_{1,t} :=~& \mu_{1,0} + t \mu_{1,1} + t^2 \mu_{1,2} + \cdots,\\
\mu_{2,t} :=~& \mu_{2,0} + t \mu_{2,1} + t^2 \mu_{2,2} + \cdots,
\end{align*}
where $\mu_{1, i}$'s and $\mu_{1, i}$'s are bilinear maps on $A$ with $\mu_{1,0} = \mu_1$ and $\mu_{2,0} = \mu_2$ such that $(A[[t]], \mu_{1,t}, \mu_{2,t})$ is a compatible associative algebra over $\mathbb{K}[[t]].$
\end{defi}

Thus, $(\mu_{1,t}, \mu_{2,t})$ is a formal deformation of $A$ if and only if
\begin{align*}
[\mu_{1,t}, \mu_{1,t}]_G = 0, \qquad  [\mu_{2,t}, \mu_{2,t}]_G = 0 ~~ \text{ and } ~~ [\mu_{1,t}, \mu_{2,t}]_G = 0.
\end{align*}
They are equivalent to the following identities
\begin{align}\label{def-eqn}
\sum_{i+j = k} [\mu_{1,i}, \mu_{1,j}]_G = 0, \qquad \sum_{i+j = k} [\mu_{2,i}, \mu_{2,j}]_G = 0 ~~ \text{ and } ~~ \sum_{i+j = k} [\mu_{1,i}, \mu_{2,j}]_G = 0,
\end{align}
for $k=0,1,2, \ldots ~.$ The equations (\ref{def-eqn}) are called deformation equations.

Observe that the identities are hold for $k=0$ as $(\mu_1, \mu_2)$ defines a compatible associative algebra structure on $A$. However, for $k=1$, we get
\begin{align*}
[\mu_1, \mu_{1,1}]_G = 0, \qquad [\mu_2, \mu_{2,1}]_G = 0 ~~ \text{ and } ~~ [\mu_1, \mu_{2,1}]_G + [\mu_2 , \mu_{1,1}]_G =0.
\end{align*}
Hence it follows from (\ref{diff-gers}) that $(\mu_{1,1}, \mu_{2,1})$ is a $2$-cocycle in the cohomology of $A$ with coefficients in itself. It is called the `infinitesimal' of the deformation $(\mu_{1,t}, \mu_{2,t})$.

\begin{rmk}\label{n-co-def}
Let $(\mu_{1,t}, \mu_{2,t})$ be a formal deformation of the form $\mu_{1,t} = \mu_1 + \sum_{i \geq p} t^i \mu_{1,i}$ and $\mu_{2,t} = \mu_2 + \sum_{i \geq p} t^i \mu_{2,i}$. Then one can show that the pair $(\mu_{1,p}, \mu_{2,p})$ is a $2$-cocycle in the cohomology of $A$ with coefficients in itself.
\end{rmk}

\begin{defi}
Two formal deformations $(\mu_{1,t}, \mu_{2,t})$ and $(\mu_{1,t}', \mu_{2,t}')$ of a compatible associative algebra $A$ are said to be equivalent if there is a formal power series
\begin{align*}
\phi_t := \phi_0 + t \phi_1 + t^2 \phi_2 + \cdots   ~~~~ (\text{with } \phi_0 = \mathrm{id}_A),
\end{align*}
where $\phi_i$'s are linear maps on $A$ such that the $\mathbb{K}[[t]]$-linear map $\phi_t : (A[[t]], \mu_{1,t}, \mu_{2,t}) \rightarrow (A[[t]], \mu_{1,t}', \mu_{2,t}')$ is a morphism of compatible associative algebras.
\end{defi}

By using the fact that $\phi_t$ is a compatible associative algebra morphism, we get that (similar to (\ref{d-f}))
\begin{align*}
(\mu_{1,1}, \mu_{2,1}) - (\mu_{1,1}', \mu_{2,1}') = \delta_c (\phi_1).
\end{align*}
This shows that infinitesimals corresponding to equivalent deformations are cohomologous, hence, corresponds to the same cohomology class in $H^2_c (A, A).$

\medskip

Next, we define the notion of rigidity of a compatible associative algebra.

\begin{defi}
A compatible associative algebra $A$ is said to be rigid if any formal deformation of $A$ is equivalent to the undeformed one.
\end{defi}

\begin{pro}
Let $(\mu_{1,t}, \mu_{2,t})$ be a formal deformation of a compatible associative algebra $A$. Then it is equivalent to a deformation $(\mu_{1,t}' = \mu_1 + \sum_{i \geq p} t^i \mu_{1,i}',~ \mu_{2,t}' = \mu_2 + \sum_{i \geq p} t^i \mu_{2,i}')$, where the first non-vanishing pair $(\mu_{1,p}', \mu_{2,p}')$ is a $2$-cocycle but not a $2$-coboundary.
\end{pro}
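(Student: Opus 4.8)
The plan is to use the standard Gerstenhaber-style homotopy argument, adapted to the compatible setting via the differential graded Lie algebra $(C^{\ast+1}_c(A,A), \llbracket~,~\rrbracket, d_{(\mu_1,\mu_2)})$ constructed in Section~\ref{section-ca}. Start with a formal deformation $(\mu_{1,t}, \mu_{2,t})$ with $\mu_{i,t} = \mu_i + \sum_{j \geq p} t^j \mu_{i,j}$, where $p \geq 1$ is chosen so that $(\mu_{1,p}, \mu_{2,p})$ is the first non-vanishing term; by Remark~\ref{n-co-def}, the pair $(\mu_{1,p}, \mu_{2,p}) \in C^2_c(A,A)$ is a $2$-cocycle, i.e. $\delta_c(\mu_{1,p}, \mu_{2,p}) = 0$. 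If this cocycle is not a coboundary, we are done. Otherwise, write $(\mu_{1,p}, \mu_{2,p}) = \delta_c(\phi_p)$ for some $\phi_p \in C^1_c(A,A)$, that is, a single linear map $\phi_p : A \to A$.

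Next I would define the equivalence $\phi_t := \mathrm{id}_A - t^p \phi_p$ (formally invertible over $\mathbb{K}[[t]]$ since its leading term is the identity) and set $(\mu_{1,t}', \mu_{2,t}') := \phi_t \cdot (\mu_{1,t}, \mu_{2,t})$, the transported deformation, so that by construction $\phi_t : (A[[t]], \mu_{1,t}, \mu_{2,t}) \to (A[[t]], \mu_{1,t}', \mu_{2,t}')$ is an isomorphism of compatible associative algebras. The key computation is to check that in $\mu_{i,t}'$ all terms of degree $< p$ still vanish and that the degree-$p$ term is $\mu_{i,p}' = \mu_{i,p} - (\delta_c \phi_p)_i = 0$; here one uses exactly that conjugating a Maurer-Cartan element by $\mathrm{id} - t^p \phi_p$ changes the $t^p$-coefficient by the coboundary $d_{(\mu_1,\mu_2)}(\phi_p)$ (up to the sign relating $\delta_c$ and $d_{(\mu_1,\mu_2)}$ noted after \eqref{diff-gers}), while leaving lower-order terms untouched. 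Thus $(\mu_{1,t}', \mu_{2,t}')$ is a formal deformation whose first possibly-nonzero term sits in degree $\geq p+1$.

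Then I would iterate: as long as the first non-vanishing term is a coboundary, repeat the procedure, each step raising the order of the leading term by at least one. Since the maps used at stage $k$ are of the form $\mathrm{id} - t^k \phi_k$ with increasing $k$, the infinite composite $\cdots \circ (\mathrm{id} - t^{p+1}\phi_{p+1}) \circ (\mathrm{id} - t^p \phi_p)$ converges $t$-adically to a well-defined formal power series $\Phi_t = \mathrm{id}_A + \sum_{i \geq p} t^i \Phi_i$, giving an equivalence from the original deformation to one whose leading term is a $2$-cocycle that is not a $2$-coboundary — unless the process never terminates, in which case every leading term is a coboundary and the limiting deformation is the trivial (undeformed) one, which is also covered by the statement if one reads it appropriately, or else one simply stops at the first stage where the cocycle is not exact. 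The main obstacle is the bookkeeping in that leading-term computation: one must verify carefully that transporting by $\mathrm{id} - t^p \phi_p$ affects the three bracket conditions $[\mu_{1,t},\mu_{1,t}]_G$, $[\mu_{2,t},\mu_{2,t}]_G$, $[\mu_{1,t},\mu_{2,t}]_G$ simultaneously in the way encoded by the single coboundary $\delta_c$ on $C^\ast_c(A,A)$, rather than by three independent Hochschild coboundaries — but this is precisely the content of the differential graded Lie algebra structure established earlier, so the argument goes through formally once that identification is invoked.
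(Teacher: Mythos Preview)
Your proposal is correct and follows essentially the same argument as the paper: both show that the leading term $(\mu_{1,p},\mu_{2,p})$ is a $2$-cocycle, kill it when it is exact by conjugating with $\mathrm{id}\pm t^p\phi_p$, and iterate. The only cosmetic discrepancy is a sign convention (the paper takes $(\mu_{1,n},\mu_{2,n})=-\delta_c(\phi_n)$ and $\phi_t=\mathrm{id}+t^n\phi_n$, which matches your formula $\mu_{i,p}'=\mu_{i,p}-(\delta_c\phi_p)_i$ only after swapping the sign of $\phi_p$); your added remarks on $t$-adic convergence of the infinite composite and on the non-terminating case are actually more careful than the paper's ``by repeating this process''.
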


\begin{proof}
Suppose $\mu_{1,t}$ and $\mu_{2,t}$ are of the form
\begin{align*}
\mu_{1,t} = \mu_{1,0} + t^n \mu_{1,n} + t^{n+1} \mu_{1,n+1} + \cdots ~~ \text{ and } ~~ \mu_{2,t} = \mu_{2,0} + t^n \mu_{2,n} + t^{n+1} \mu_{2,n+1} + \cdots.
\end{align*}
Then we know from Remark \ref{n-co-def} that $(\mu_{1,n}, \mu_{2,n})$ is a $2$-cocycle in the cohomology of $A$ with coefficients in itself. If $(\mu_{1,n}, \mu_{2,n})$ is not a $2$-coboundary, we are done. However, if $(\mu_{1,n}, \mu_{2,n})$ is a $2$-coboundary, say $- \delta_c (\phi_n)$, then we set $\phi_t = \mathrm{id}_A + t^n \phi_n$. Define
\begin{align*}
\mu_{1,t}' = \phi_t^{-1} \circ \mu_{1,t} \circ ( \phi_t \otimes \phi_t ) ~~ \text{ and } ~~ \mu_{2,t}' = \phi_t^{-1} \circ \mu_{2,t} \circ ( \phi_t \otimes \phi_t ).
\end{align*}
Then $(\mu_{1,t}', \mu_{2,t}')$ is a deformation equivalent to $(\mu_{1,t}, \mu_{2,t})$, and further the coefficients of $t, t^2, \ldots, t^n$ in $\phi_{1,t}'$ and $\phi_{2,t}'$ are zero. By repeating this process, we get the required deformation.
\end{proof}

As a consequence, we get the following sufficient condition for rigidity.

\begin{thm}
Let $A$ be a compatible associative algebra. If $H^2_c(A, A) = 0$ then $A$ is rigid.
\end{thm}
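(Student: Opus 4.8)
The plan is to deduce this directly from the preceding proposition, which asserts that every formal deformation $(\mu_{1,t}, \mu_{2,t})$ of $A$ is equivalent to one of the form $\mu_{1,t}' = \mu_1 + \sum_{i \geq p} t^i \mu_{1,i}'$ and $\mu_{2,t}' = \mu_2 + \sum_{i \geq p} t^i \mu_{2,i}'$ in which the first non-vanishing pair $(\mu_{1,p}', \mu_{2,p}')$ is a $2$-cocycle in $Z^2_c(A,A)$ but \emph{not} a $2$-coboundary. So the first step is simply to invoke that proposition for an arbitrary formal deformation of $A$.

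Next I would use the hypothesis $H^2_c(A,A) = 0$. This means $Z^2_c(A,A) = B^2_c(A,A)$, i.e.\ every $2$-cocycle is a $2$-coboundary. Confronting this with the conclusion of the proposition, there is no pair that is simultaneously a $2$-cocycle and not a $2$-coboundary; hence the ``first non-vanishing pair'' cannot exist. Therefore the equivalent deformation $(\mu_{1,t}', \mu_{2,t}')$ must have $\mu_{1,i}' = 0$ and $\mu_{2,i}' = 0$ for all $i \geq 1$, i.e.\ $\mu_{1,t}' = \mu_1$ and $\mu_{2,t}' = \mu_2$, which is precisely the undeformed deformation. Consequently the original deformation is equivalent to the trivial one, so by definition $A$ is rigid.

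Strictly speaking, to make the ``no first non-vanishing pair'' argument airtight one should phrase it as an induction / well-ordering argument: if some $(\mu_{1,i}', \mu_{2,i}')$ were nonzero, let $p$ be the least such index; the proposition (applied after the normalization it provides) forces $(\mu_{1,p}', \mu_{2,p}')$ to be a $2$-cocycle that is not a $2$-coboundary, contradicting $H^2_c(A,A)=0$. I do not expect any genuine obstacle here: all the analytic content (cohomology interpretation of the deformation equations via Remark \ref{n-co-def} and equation (\ref{diff-gers}), and the homotopy correction $\phi_t = \mathrm{id}_A + t^n\phi_n$) has already been packaged into the preceding proposition, so the only thing to be careful about is the logical bookkeeping of the ``repeat the process'' step and confirming it terminates in the formal power series sense — i.e.\ after clearing terms up to order $n$, the next non-vanishing order strictly increases, so in the $t$-adic topology the limiting equivalence transforms the deformation into the undeformed one.
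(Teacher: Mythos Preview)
Your proposal is correct and follows exactly the approach the paper intends: the theorem is stated there as an immediate consequence of the preceding proposition, and your argument (invoking that proposition and observing that under $H^2_c(A,A)=0$ no first non-vanishing pair can survive) is precisely the intended deduction.
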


\subsection{Finite order deformations and their extensions}

In this subsection, we consider finite order deformations of a compatible associative algebra $A$ and their extensions to deformations of the next order. We show that the corresponding obstruction class for such extension lies in the third cohomology group of $A$.

Let $(A, \mu_1, \mu_2)$ be a compatible associative algebra and let $N \in \mathbb{N}$ be a fixed natural number. Consider the space $A[[t]]/(t^{N+1})$ which is a module over the ring $\mathbb{K}[[t]]/(t^{N+1})$.

\begin{defi}
An order $N$ deformation of the compatible associative algebra $A$ consists of two polynomials of the form
\begin{align*}
\mu_{1,t}^N :=~& \mu_{1,0} + t \mu_{1,1} + t^2 \mu_{1,2} + \cdots + t^N \mu_{1,N},\\
\mu_{2,t}^N :=~& \mu_{2,0} + t \mu_{2,1} + t^2 \mu_{2,2} + \cdots + t^N \mu_{2,N},
\end{align*}
where $\mu_{i,j}$'s are bilinear maps on $A$ with $\mu_{1,0} = \mu_1$ and $\mu_{2,0} = \mu_2$ such that $(A[[t]]/ (t^{N+1}), \mu_{1,t}^N , \mu_{2,t}^N)$ is a compatible associative algebra over $\mathbb{K}[[t]]/(t^{N+1})$.
\end{defi}

\begin{ex}
An infinitesimal deformation of $A$ is an order $1$ deformation of $A$.
\end{ex}

Let $(\mu_{1,t}^N, \mu_{2,t}^N)$ be a deformation of order $N$. Then the following set of relations are hold:
\begin{align*}
\sum_{i+j = k} [\mu_{1,i}, \mu_{1,j}]_G = 0, ~~~~  \sum_{i+j = k} [\mu_{2,i}, \mu_{2,j}]_G = 0 ~~~~ \text{ and } ~~~~ \sum_{i+j = k} [\mu_{1,i}, \mu_{2,j}]_G = 0,
\end{align*}
for $k=0,1, \ldots, N$. These three sets of relations can be compactly written as
\begin{align}\label{finite-def}
\llbracket (\mu_1, \mu_2), ( \mu_{1, k}, \mu_{2,k}) \rrbracket = - \frac{1}{2} \sum_{i+j = k;~ i, j \geq 1} \llbracket (\mu_{1,i}, \mu_{2,i}), (\mu_{1,j}, \mu_{2,j}) \rrbracket, \text{ for } k=1, \ldots , N.
\end{align}

\begin{defi}
An order $N$ deformation $(\mu_{1,t}^N = \sum_{i=0}^N t^i \mu_{1,i}, \mu_{2,t}^N = \sum_{i=0}^N t^i \mu_{2,i})$ is said to be extensible if there exist two bilinear maps on $A$ (say, $\mu_{1, N+1}$ and $\mu_{2, N+1})$ such that
\begin{align*}
\big( \mu_{1,t}^{N+1} := \mu_{1,t}^N + t^{N+1} \mu_{1, N+1},~ \mu_{2,t}^{N+1} := \mu_{2,t}^N + t^{N+1} \mu_{2, N+1} \big)
\end{align*}
is a deformation of order $N+1$.
\end{defi}

Note that, for $(\mu_{1,t}^{N+1}, \mu_{2,t}^{N+1})$ to be a deformation of order $N+1$, we must have the relations (\ref{finite-def}) and one more equation to be satisfied, namely,
\begin{align}\label{finite-def-n1}
\llbracket (\mu_1, \mu_2), ( \mu_{1, N+1}, \mu_{2,N+1}) \rrbracket = - \frac{1}{2} \sum_{i+j = N+1;~ i, j \geq 1} \llbracket (\mu_{1,i}, \mu_{2,i}), (\mu_{1,j}, \mu_{2,j}) \rrbracket.
\end{align}
Observe that the right hand side of (\ref{finite-def-n1}) is a $3$-cochain on the cohomology complex of the compatible associative algebra $A$. Moreover, it does not contain $\mu_{1, N+1}$ and $\mu_{2, N+1}$. Hence it depends only on the order $N+1$ deformation $(\mu_{1,t}^N, \mu_{2,t}^N)$. It is called the obstruction cochain to extend the deformation $(\mu_{1,t}^N, \mu_{2,t}^N)$, denoted by $Ob_{(\mu_{1,t}^N, \mu_{2,t}^N)}$.

\begin{pro}
The obstruction cochain $Ob_{(\mu_{1,t}^N, \mu_{2,t}^N)}$ is a $3$-cocycle, i.e., $\delta_c \big(   Ob_{(\mu_{1,t}^N, \mu_{2,t}^N)} \big) = 0$.
\end{pro}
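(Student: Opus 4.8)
The plan is to exploit the differential graded Lie algebra structure $(C^{\ast+1}_c(A,A), \llbracket~,~\rrbracket, d_{(\mu_1,\mu_2)})$ established earlier, together with the graded Jacobi identity, exactly as in Gerstenhaber's classical obstruction argument. Writing $\Theta := (\mu_1,\mu_2) \in C^2_c(A,A)$ and $\Theta_i := (\mu_{1,i},\mu_{2,i}) \in C^2_c(A,A)$, the obstruction cochain is
\begin{align*}
Ob_{(\mu_{1,t}^N, \mu_{2,t}^N)} = - \frac{1}{2} \sum_{i+j = N+1;~ i,j \geq 1} \llbracket \Theta_i, \Theta_j \rrbracket \in C^3_c(A,A).
\end{align*}
Recall that $\delta_c = (-1)^{n-1} d_{(\mu_1,\mu_2)} = (-1)^{n-1}\llbracket \Theta, - \rrbracket$ on $C^n_c(A,A)$, so showing $\delta_c(Ob) = 0$ is equivalent to showing $\llbracket \Theta, Ob \rrbracket = 0$, i.e.
\begin{align*}
\sum_{i+j=N+1;~ i,j \geq 1} \llbracket \Theta, \llbracket \Theta_i, \Theta_j \rrbracket \rrbracket = 0.
\end{align*}

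First I would apply the graded Jacobi identity to each term $\llbracket \Theta, \llbracket \Theta_i, \Theta_j \rrbracket \rrbracket$, rewriting it in terms of $\llbracket \llbracket \Theta, \Theta_i \rrbracket, \Theta_j \rrbracket$ and $\llbracket \Theta_i, \llbracket \Theta, \Theta_j \rrbracket \rrbracket$ (the signs are all $+1$ since $\Theta$ and the $\Theta_i$ all sit in degree $1$ of the graded Lie algebra, where the bracket is graded-symmetric in the appropriate sense). Then I would substitute the order-$N$ deformation equations (\ref{finite-def}), which give $\llbracket \Theta, \Theta_k \rrbracket = -\frac{1}{2}\sum_{p+q=k;~ p,q\geq 1}\llbracket \Theta_p, \Theta_q \rrbracket$ for each $k = 1, \ldots, N$; note that since $i,j \geq 1$ and $i + j = N+1$ we have $1 \le i,j \le N$, so these relations are exactly the ones available. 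This turns the sum into a triple sum of terms of the form $\llbracket \llbracket \Theta_p, \Theta_q \rrbracket, \Theta_j \rrbracket$ with $p+q+j = N+1$ and all indices $\geq 1$.

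The remaining step is a combinatorial cancellation: one shows this triple sum vanishes by a symmetry argument, applying the graded Jacobi identity once more to the terms $\llbracket \llbracket \Theta_p, \Theta_q\rrbracket, \Theta_j\rrbracket$ and using the full symmetry of the expression under permutations of the three indices $p,q,j$ — the antisymmetrized sum of a Jacobiator over all of $\mathbb{S}_3$ is zero. This is the standard fact that in any (graded) Lie algebra, $\sum_{p+q+r = m} \llbracket \llbracket x_p, x_q \rrbracket, x_r \rrbracket = 0$ when the $x_i$ all have degree $1$ and the sum runs symmetrically; it is proved by pairing up terms via the Jacobi identity. I expect this final cancellation to be the main obstacle — not because it is deep, but because keeping track of the coefficients ($\tfrac12$ factors, the splitting of $i+j=N+1$ into ordered versus unordered pairs) is where the bookkeeping must be done carefully. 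An alternative, cleaner route that I would mention is to introduce the formal sum $\mu_t^N = \Theta + \sum_{i=1}^N t^i \Theta_i$ in $C^2_c(A,A)\otimes \mathbb{K}[[t]]/(t^{N+1})$, observe that $\llbracket \mu_t^N, \mu_t^N \rrbracket \equiv -2t^{N+1} Ob \pmod{t^{N+2}}$ by (\ref{finite-def}), and then apply $\llbracket \mu_t^N, - \rrbracket$ using the graded Jacobi identity $\llbracket \mu_t^N, \llbracket \mu_t^N, \mu_t^N \rrbracket \rrbracket = 0$; comparing coefficients of $t^{N+1}$ (and discarding higher-order terms, which is legitimate modulo $t^{N+2}$) yields $\llbracket \Theta, Ob \rrbracket = 0$ directly. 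This is the argument I would actually write down, as it hides all the combinatorics inside the single identity $\llbracket x, \llbracket x, x\rrbracket\rrbracket = 0$.
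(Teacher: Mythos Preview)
Your proposal is correct, and your direct line of argument --- apply graded Jacobi to $\llbracket \Theta, \llbracket \Theta_i, \Theta_j \rrbracket \rrbracket$, substitute the relations (\ref{finite-def}) for $\llbracket \Theta, \Theta_k \rrbracket$ with $1 \le k \le N$, and then kill the resulting symmetric triple sum by Jacobi again --- is exactly the computation the paper carries out. Your alternative packaging via $\llbracket \mu_t^N, \llbracket \mu_t^N, \mu_t^N \rrbracket \rrbracket = 0$ is a legitimate and cleaner way to organize the same cancellation; just be careful that when you read off the $t^{N+1}$ coefficient you are working in $C^2_c(A,A)\otimes \mathbb{K}[t]$ (or $\mathbb{K}[[t]]/(t^{N+2})$) rather than $\mathbb{K}[[t]]/(t^{N+1})$, so that the term $-2t^{N+1}\,Ob$ is visible before truncation.
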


\begin{proof}
We have
\begin{align*}
&\delta_c \big(  Ob_{(\mu_{1,t}^N, \mu_{2,t}^N)} \big) \\
&= - \frac{1}{2} \sum_{i+j = N+1;~ i, j \geq 1}  \llbracket (\mu_1, \mu_2), \llbracket   (\mu_{1,i}, \mu_{2,i}), (\mu_{1,j}, \mu_{2,j}) \rrbracket  \rrbracket \\
&= - \frac{1}{2} \sum_{i+j = N+1;~ i, j \geq 1}  \big(      \llbracket \llbracket  (\mu_{1}, \mu_{2}), (\mu_{1,i}, \mu_{2,i}) \rrbracket, (\mu_{1,j}, \mu_{2,j}) \rrbracket  ~-~ \llbracket (\mu_{1,i}, \mu_{2,i}), \llbracket (\mu_{1}, \mu_{2}), (\mu_{1,j}, \mu_{2,j}) \rrbracket \rrbracket \big) \\
&= \frac{1}{4} \sum_{i_1 + i_2 + j = N+1;~ i_1, i_2, j \geq 1} \llbracket \llbracket (\mu_{1,i_1}, \mu_{2,i_1}), (\mu_{1,i_2}, \mu_{2,i_2}) \rrbracket, (\mu_{1,j}, \mu_{2,j}) \rrbracket  \\
& \qquad - \frac{1}{4} \sum_{i+j_1 + j_2 = N+1;~ i, j_1, j_2 \geq 1} \llbracket (\mu_{1,i}, \mu_{2,i}), \llbracket (\mu_{1,j_1}, \mu_{2,j_1}), (\mu_{1,j_2}, \mu_{2,j_2}) \rrbracket \rrbracket    \qquad (\text{form } (\ref{finite-def})) \\
&= \frac{1}{2} \sum_{i+j+k = N+1;~ i, j, k \geq 1} \llbracket \llbracket (\mu_{1,i}, \mu_{2,i}), (\mu_{1,j}, \mu_{2,j}) \rrbracket , (\mu_{1,k}, \mu_{2,k}) \rrbracket  = 0.
\end{align*}
Hence the proof.
\end{proof}

Thus, it follows that the obstruction cochain induces a third cohomology class $[Ob_{(\mu_{1,t}^N, \mu_{2,t}^N)}] \in H^3_c (A, A)$. It is called the obstruction class. Finally, as a consequence of (\ref{finite-def-n1}), we get the following.

\begin{thm}
An order $N$ deformation $(\mu_{1,t}^N, \mu_{2,t}^N)$ of a compatible associative algebra $A$ is extensible if and only if the corresponding obstruction class $[Ob_{(\mu_{1,t}^N, \mu_{2,t}^N)}] \in H^3_c (A, A)$  is trivial.
\end{thm}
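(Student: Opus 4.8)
The plan is to show both implications directly from equation (\ref{finite-def-n1}), using the fact that the obstruction cochain $Ob_{(\mu_{1,t}^N, \mu_{2,t}^N)}$ is already known to be a $3$-cocycle. First I would unwind the definitions: an extension of the order $N$ deformation to order $N+1$ amounts to finding a pair $(\mu_{1,N+1}, \mu_{2,N+1}) \in C^2_c(A,A)$ such that all the deformation relations in degrees $k = 0, 1, \ldots, N+1$ hold. Since $(\mu_{1,t}^N, \mu_{2,t}^N)$ is already a deformation of order $N$, the relations (\ref{finite-def}) for $k \le N$ are automatically satisfied and do not involve $\mu_{1,N+1}, \mu_{2,N+1}$. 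Therefore the only new requirement is precisely equation (\ref{finite-def-n1}), which using (\ref{diff-gers}) (up to sign) can be rewritten as
\begin{align*}
\delta_c (\mu_{1,N+1}, \mu_{2,N+1}) = \pm\, Ob_{(\mu_{1,t}^N, \mu_{2,t}^N)}.
\end{align*}

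For the forward direction, suppose the order $N$ deformation is extensible. Then by definition there is a pair $(\mu_{1,N+1}, \mu_{2,N+1})$ satisfying the displayed equation above, which exhibits $Ob_{(\mu_{1,t}^N, \mu_{2,t}^N)}$ (up to sign) as $\delta_c$ of a $2$-cochain, i.e.\ as a coboundary. Hence the obstruction class $[Ob_{(\mu_{1,t}^N, \mu_{2,t}^N)}] \in H^3_c(A,A)$ vanishes. Conversely, suppose the obstruction class is trivial. By the previous proposition $Ob_{(\mu_{1,t}^N, \mu_{2,t}^N)}$ is a $3$-cocycle, and triviality of its class means there exists $(\mu_{1,N+1}, \mu_{2,N+1}) \in C^2_c(A,A)$ with $\delta_c(\mu_{1,N+1}, \mu_{2,N+1})$ equal to $Ob_{(\mu_{1,t}^N, \mu_{2,t}^N)}$ up to the appropriate sign. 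Setting $\mu_{1,t}^{N+1} := \mu_{1,t}^N + t^{N+1}\mu_{1,N+1}$ and $\mu_{2,t}^{N+1} := \mu_{2,t}^N + t^{N+1}\mu_{2,N+1}$, one checks that (\ref{finite-def-n1}) holds by construction, while (\ref{finite-def}) for $k \le N$ continues to hold since the new top-order terms do not affect those equations. Thus $(\mu_{1,t}^{N+1}, \mu_{2,t}^{N+1})$ is a deformation of order $N+1$, proving extensibility.

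The one point requiring genuine care — and the step I expect to be the main (mild) obstacle — is the bookkeeping that identifies the left-hand side of (\ref{finite-def-n1}) with $\pm\delta_c(\mu_{1,N+1}, \mu_{2,N+1})$, together with checking that the sign conventions from (\ref{diff-gers}) are consistent throughout; and secondly, verifying that adding the degree-$(N+1)$ term genuinely leaves the lower-degree relations (\ref{finite-def}) untouched (this is because in $\sum_{i+j=k}\llbracket(\mu_{1,i},\mu_{2,i}),(\mu_{1,j},\mu_{2,j})\rrbracket$ with $k \le N$, no index can reach $N+1$). Everything else is a direct translation between the definition of extensibility and the statement that a cohomology class vanishes, so no further computation is needed.
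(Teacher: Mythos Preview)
Your proposal is correct and follows exactly the approach the paper intends: the theorem is stated there simply ``as a consequence of (\ref{finite-def-n1}),'' and your argument is precisely the unwinding of that consequence, identifying $\llbracket (\mu_1,\mu_2),(\mu_{1,N+1},\mu_{2,N+1})\rrbracket$ with $-\delta_c(\mu_{1,N+1},\mu_{2,N+1})$ via (\ref{diff-gers}) so that (\ref{finite-def-n1}) becomes the statement that $Ob_{(\mu_{1,t}^N,\mu_{2,t}^N)}$ is a coboundary. The sign bookkeeping you flag is indeed the only thing to check, and it works out as you indicate.
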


As a corollary, we obtain these interesting results.

\begin{cor}
If $H^3_c (A,A) =0$ then any finite order deformation of $A$ is extensible.
\end{cor}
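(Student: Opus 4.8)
The plan is to deduce this as an immediate consequence of the preceding theorem together with the proposition that identifies the obstruction as a cocycle. First I would fix an arbitrary finite order deformation of $A$, say an order $N$ deformation $(\mu_{1,t}^N, \mu_{2,t}^N)$ for some $N \in \mathbb{N}$. By the proposition just above (the one asserting $\delta_c(Ob_{(\mu_{1,t}^N, \mu_{2,t}^N)}) = 0$), the obstruction cochain $Ob_{(\mu_{1,t}^N, \mu_{2,t}^N)}$ is a $3$-cocycle in the cochain complex $\{ C^\ast_c(A,A), \delta_c \}$, and hence it determines a well-defined obstruction class $[Ob_{(\mu_{1,t}^N, \mu_{2,t}^N)}] \in H^3_c(A,A)$.

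Next I would invoke the hypothesis: since $H^3_c(A,A) = 0$, every class in $H^3_c(A,A)$ is trivial, so in particular $[Ob_{(\mu_{1,t}^N, \mu_{2,t}^N)}] = 0$. Applying the preceding theorem (extensibility of an order $N$ deformation is equivalent to triviality of its obstruction class) then yields that $(\mu_{1,t}^N, \mu_{2,t}^N)$ is extensible to an order $N+1$ deformation. Since $N$ and the chosen deformation were arbitrary, this shows that every finite order deformation of $A$ is extensible, which is the assertion of the corollary.

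I do not anticipate any real obstacle here, as the argument is purely a matter of quoting the two preceding results; the only point to be careful about is that the equivalence in the preceding theorem is applied uniformly for every order $N \ge 1$, which is automatic from its statement. If desired, one could append a short remark that, by iterating the extension step, any finite order deformation of $A$ then extends all the way to a formal deformation of $A$, but this is not required for the corollary itself.
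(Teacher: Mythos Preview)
Your proposal is correct and follows exactly the intended argument: the paper states this as an immediate corollary of the preceding theorem (extensibility $\Leftrightarrow$ vanishing of the obstruction class in $H^3_c(A,A)$) and the proposition showing the obstruction cochain is a $3$-cocycle, without giving further details. Your write-up simply spells out the obvious deduction, and nothing more is needed.
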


\begin{cor}
If $H^3_c(A,A)= 0 $ then any $2$-cocycle $(\omega_1, \omega_2) \in Z^2_c (A, A)$ is the infinitesimal of some formal deformations of $A$.
\end{cor}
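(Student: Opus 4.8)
The plan is to construct the desired formal deformation order by order, starting from the given $2$-cocycle and feeding off the vanishing of $H^3_c(A,A)$ at each stage. First I would check that setting $\mu_{1,1} := \omega_1$ and $\mu_{2,1} := \omega_2$ produces an order $1$ deformation of $A$. Indeed, the finite-order deformation equations (\ref{finite-def}) for $N=1$ reduce to $\llbracket (\mu_1,\mu_2),(\omega_1,\omega_2)\rrbracket = 0$, because the sum on the right-hand side runs over $i,j\geq 1$ with $i+j=1$ and is therefore empty; by (\ref{diff-gers}) this is just $\pm\,\delta_c(\omega_1,\omega_2)=0$, which holds precisely because $(\omega_1,\omega_2)\in Z^2_c(A,A)$. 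So $(\mu_{1,t}^1,\mu_{2,t}^1)=(\mu_1 + t\omega_1,\mu_2+t\omega_2)$ is an infinitesimal (i.e. order $1$) deformation of $A$ whose infinitesimal is $(\omega_1,\omega_2)$.

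Next I would run the obstruction-theoretic induction. Suppose an order $N$ deformation $(\mu_{1,t}^N,\mu_{2,t}^N)$ with linear term $(\omega_1,\omega_2)$ has been constructed. By the extensibility theorem, it extends to an order $N+1$ deformation if and only if the obstruction class $[Ob_{(\mu_{1,t}^N,\mu_{2,t}^N)}]\in H^3_c(A,A)$ vanishes; since $H^3_c(A,A)=0$ by hypothesis, this obstruction class is automatically trivial, so an extension exists (this is exactly the preceding corollary). Choosing such an extension at each step, and observing that each extension leaves the already-chosen coefficients $\mu_{1,1},\dots,\mu_{1,N}$ and $\mu_{2,1},\dots,\mu_{2,N}$ unchanged, the coefficients $\mu_{1,k},\mu_{2,k}$ become unambiguously defined for every $k\geq 1$. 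Assembling them yields formal power series $\mu_{1,t}=\mu_1+\sum_{k\geq 1}t^k\mu_{1,k}$ and $\mu_{2,t}=\mu_2+\sum_{k\geq 1}t^k\mu_{2,k}$.

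Finally I would observe that the deformation equations (\ref{def-eqn}) for these formal series are equivalent, coefficient of $t^k$ by coefficient of $t^k$, to the family of finite-order deformation equations that all the $(\mu_{1,t}^N,\mu_{2,t}^N)$ satisfy; hence $(\mu_{1,t},\mu_{2,t})$ is a genuine formal deformation of $A$, and by construction its infinitesimal is $(\omega_1,\omega_2)$. The only point requiring a little care is this bookkeeping — checking that a coherent tower of finite order deformations really does assemble into a formal deformation — but it is immediate from comparing $t^k$-coefficients, and there is no analytic subtlety: the whole argument is the standard inductive removal of obstructions, with the sole input being the hypothesis $H^3_c(A,A)=0$.
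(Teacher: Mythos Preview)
Your proposal is correct and follows exactly the approach the paper intends: the corollary is stated without proof as an immediate consequence of the extensibility theorem and the preceding corollary, and your inductive construction (start with the order $1$ deformation given by the $2$-cocycle, then repeatedly extend using $H^3_c(A,A)=0$) is precisely the standard argument the paper leaves implicit.
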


\section{Homology of compatible associative algebras}\label{section:homology}

In this section, we introduce a notion of compatible presimplicial vector space and associate a chain complex to any compatible presimplicial vector space. As an application, we introduce the homology of compatible associative algebra $A$ with coefficients in a compatible $A$-bimodule.

\subsection{Compatible presimplicial vector spaces}

\begin{defi} \cite{loday-book}
A presimplicial vector space $(C, \Delta)$ consists of a collection $C = \{ C_n \}_{n \geq 0}$ of vector spaces, together with a collection of linear maps (called face maps)
\begin{align*}
\Delta = \{ \partial_i : C_n \rightarrow C_{n-1} ~|~ i=0,1, \ldots, n \}_{n \geq 1}
\end{align*}
satisfying
\begin{align*}
\partial_i \partial_j = \partial_{j-1} \partial_i,~\text{ for } 0 \leq i < j \leq n.
\end{align*}
\end{defi}

Given a presimplicial vector space $(C, \Delta)$, one can define a map $\partial : C_n \rightarrow C_{n-1}$, for $n \geq 1$, by $\partial = \sum_{i=0}^n (-1)^i \partial_i$.
Then it is easy to see that $\partial$ is a differential, i.e., $\partial^2 = 0$. In other words, $(C, \partial)$ is a chain complex.

\begin{defi}
A compatible presimplicial vector space is a triple $(C, \Delta, \Delta')$ in which $(C, \Delta)$ and $(C, \Delta')$ are presimplicial vector spaces satisfying additionally
\begin{align}\label{presim-id}
\partial_i \partial_j' + \partial_i' \partial_j = \partial_{j-1} \partial_i' + \partial_{j-1}' \partial_i, ~ \text{ for } 0 \leq i < j \leq n.
\end{align}
\end{defi}

Let $(C, \Delta, \Delta')$ be a compatible presimplicial vector space. Then we have the following.

\begin{pro}\label{comp-pre-pro}
The induced differentials $\partial$ and $\partial'$ satisfy ~$\partial \partial' + \partial' \partial = 0$.
\end{pro}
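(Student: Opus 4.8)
The statement is the exact chain-complex analogue of Proposition \ref{delta-comp} (which asserted $\delta_1 \circ \delta_2 + \delta_2 \circ \delta_1 = 0$ for the two Hochschild coboundaries), so the plan is to mimic that computation at the level of the alternating-sum differentials attached to the two presimplicial structures. First I would write out $\partial \partial' + \partial' \partial$ as a double sum: $\partial \partial' = \sum_{i,j} (-1)^{i+j} \partial_i \partial'_j$ (with $0 \le i \le n-1$, $0 \le j \le n$) and similarly for $\partial' \partial$. The goal is to show that, after reindexing, every term cancels.

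Next I would split each double sum according to the three regions $i < j$, $i = j$, $i > j$, exactly as in the classical proof that $\partial^2 = 0$ for a single presimplicial space. In the region $i < j$ the compatibility hypothesis (\ref{presim-id}), $\partial_i \partial'_j + \partial'_i \partial_j = \partial_{j-1}\partial'_i + \partial'_{j-1}\partial_i$, is precisely what is needed: it lets me replace the sum of the $i<j$ terms of $\partial\partial'$ and $\partial'\partial$ by a sum over the region where the ``inner'' index is the larger one, which then matches (up to sign, coming from the $(-1)^{i+j}$ weights and the index shift $j \mapsto j-1$) the terms with $i \ge j$. The diagonal terms $i = j$ from $\partial\partial'$ and $\partial'\partial$ pair up and cancel against each other because of the opposite signs produced by the reindexing, just as the $i=j$ terms do in the single-complex argument. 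I should be careful to track the sign bookkeeping: a term $(-1)^{i+j}\partial_i \partial'_j$ with $i<j$ gets matched with $(-1)^{i+(j-1)}\partial_{j-1}\partial'_i$, and the extra minus sign is what effects the cancellation.

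Alternatively — and this may be the cleaner writeup — I would avoid the explicit index juggling by invoking Proposition \ref{delta-comp}'s style of argument abstractly: the single-complex identity $\partial^2 = 0$ applied to the ``sum'' presimplicial space. Concretely, for arbitrary scalars $k, l \in \mathbb{K}$ the face maps $k\partial_i + l\partial'_i$ satisfy the presimplicial identity (this is immediate from $\partial_i\partial_j = \partial_{j-1}\partial_i$, $\partial'_i\partial'_j = \partial'_{j-1}\partial'_i$, and (\ref{presim-id})), so the associated alternating sum $k\partial + l\partial'$ squares to zero. Expanding $(k\partial + l\partial')^2 = k^2 \partial^2 + kl(\partial\partial' + \partial'\partial) + l^2 \partial'^2 = 0$ and using $\partial^2 = \partial'^2 = 0$ together with $kl \neq 0$ (e.g. $k = l = 1$) gives $\partial\partial' + \partial'\partial = 0$.

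The main obstacle is purely the sign-and-reindexing bookkeeping in the first approach; there is no conceptual difficulty. For this reason I would present the proof via the second route — checking that $k\partial_i + l\partial'_i$ is again a presimplicial face operator and reading off the cross term — since it reduces everything to the already-known fact that a presimplicial vector space yields a chain complex, and keeps the argument parallel to the Maurer–Cartan/bidifferential philosophy used throughout the paper.
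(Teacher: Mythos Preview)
Your proposal is correct. Your first sketch --- expanding $\partial\partial' + \partial'\partial$ as a signed double sum, splitting according to the relative size of the indices, applying (\ref{presim-id}) on the $i<j$ region, and reindexing to cancel against the remaining terms --- is exactly the argument the paper gives (the paper uses the two regions $i<j$ and $i\geq j$ rather than your three, so there is no separate ``diagonal'' case, but the mechanism is identical).

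Your preferred second route, however, is not the paper's and is genuinely cleaner. The paper never observes that the linear combination $k\partial_i + l\partial'_i$ again satisfies the presimplicial identities, nor does it read off the cross term from $(k\partial + l\partial')^2 = 0$. Your polarization argument buys a proof free of any explicit sign-and-index bookkeeping, reducing everything to the already-known single-complex fact that a presimplicial module gives a chain complex; it also mirrors the ``compatible Maurer--Cartan'' philosophy of Section~\ref{section-ca} more closely. The only cost is the one-line verification that $(C, k\Delta + l\Delta')$ is presimplicial, which follows immediately from the two individual presimplicial identities together with (\ref{presim-id}). The paper's direct computation, by contrast, has the minor pedagogical advantage of exhibiting the cancellation term by term.
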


\begin{proof}
First observe that
\begin{align*}
\partial \partial' + \partial' \partial = \sum_i \sum_j (-1)^{i+j}~ \partial_i \partial_j' + \sum_i \sum_j (-1)^{i+j}~ \partial_i' \partial_j.
\end{align*}
We now split both of these sums into two parts: namely for $i <j$ and $i \geq j$. Thus,
\begin{align*}
\partial \partial' + \partial' \partial =~& \sum_{i <j } (-1)^{i+j} ~ ( \partial_i \partial_j' + \partial_i' \partial_j ) ~+~ \sum_{i \geq j} (-1)^{i+j} ~ ( \partial_i \partial_j' + \partial_i' \partial_j ) \\
=~& \sum_{i < j} (-1)^{i+j} ~ (\partial_{j-1} \partial_i' + \partial_{j-1}' \partial_i ) ~+~ \sum_{i \geq j} (-1)^{i+j} ~ ( \partial_i \partial_j' + \partial_i' \partial_j ) \quad (\text{from } (\ref{presim-id})).
\end{align*}
Replacing $i$ by $n$ and $j$ by $m+1$ in the first summation, we get that it is opposite to the second summation. Hence they cancel with each other. This completes the proof.
\end{proof}

We are now ready to define a chain complex $\{ C^c, \partial^c \}$ associated to a compatible presimplicial vector space $(C, \Delta, \Delta')$ as follows. We define the $n$-th chain group $(C^c)_n$, for $n \geq 0,$ as

\begin{align*}
(C^c)_0 :=~& \{ c \in C_0 ~|~ c = \partial_0 x - \partial_1 x = \partial_0' x - \partial_1' x, \text{ for some } x \in C_1 \},\\
(C^c)_n :=~& \underbrace{C_n \oplus \cdots \oplus C_n}_{n \text{ times}}, \text{ for } n \geq 1,
\end{align*}
and the map $\partial^c : (C^c)n \rightarrow (C^c)_{n-1},$ for $n \geq 1$, given by
\begin{align}
(\partial^c)(x) :=~& \partial (x) = \partial' (x), \text{ for } x \in C_1, \label{boun1} \\
(\partial^c) (x_1, \ldots, x_n) :=~& (  \partial x_1 + \partial' x_2, \partial x_2 + \partial' x_3, \ldots, \partial x_{n-1} + \partial' x_n ), \label{boun2}
\end{align}
for $(x_1, \ldots, x_n) \in (C^c)_n$. The map $\partial^c$ can be understood by the following diagram:

\begin{align*}
\xymatrix{
  & C_3 \ar[rd]^{\partial} & & & \\
  & & C_2 \ar[rd]^{\partial} & & \\
\cdots \cdot  & C_3 \ar[rd]^{\partial} \ar[ru]_{\partial'} & & C_1 \ar[r]^{\partial = \partial'} & (C^c)_0.\\
  & & C_2 \ar[ru]_{\partial'} & & \\
  & C_3 \ar[ru]_{\partial'}& & &
 }
\end{align*}

It is easy to verify using Proposition \ref{comp-pre-pro} that $(\partial^c)^2 =0$. In other words, $\{ C^c, \partial^c \}$ is a chain complex. The corresponding homology groups are called the homology associated with the compatible presimplicial vector space $(C, \Delta, \Delta').$

\subsection{Homology of compatible algebras}

Let $A$ be an associative algebra and $M$ be an $A$-bimodule. Consider the collection $C= \{ C_n (A, M) \}_{n \geq 0}$ of vector spaces given by
\begin{align*}
C_n (A, M)  := M \otimes A^{\otimes n}, ~ \text{ for } n \geq 0.
\end{align*}
It forms a presimplicial vector space with the collection of face maps $\Delta = \{ \partial_i : C_n (A,M) \rightarrow C_{n-1}(A,M) ~|~ i =0, 1, \ldots, n \}_{n \geq v1}$ given by
\begin{align*}
\partial_0 (m \otimes a_1 \otimes \cdots \otimes a_n) =~& m \cdot a_1 \otimes a_2 \otimes \cdots \otimes a_n,\\
\partial_i (m \otimes a_1 \otimes \cdots \otimes a_n) =~& m \otimes a_1 \otimes a_i \cdot a_{i+1} \otimes \cdots \otimes a_n, ~~~ 1 \leq i \leq n-1\\
\partial_n (m \otimes a_1 \otimes \cdots \otimes a_n) =~& a_n \cdot m \otimes a_1 \otimes \cdots \otimes a_{n-1}.
\end{align*}

The induced differential $\partial : C_n (A,M) \rightarrow C_{n-1} (A,M),$ for $n \geq 1$, is the Hochschild differential for the homology of $A$ with coefficients in the $A$-bimodule $M$.

\medskip

Next, let $A = (A, \mu_1, \mu_2)$ be a compatible associative algebra and $M$ be a  compatible $A$-bimodule. Then it follows from the above observation that $(C, \Delta)$ and $(C, \Delta')$ are presimplicial vector spaces, where $\Delta$ (resp. $\Delta'$) is a collection of face maps on $C$ induced by $(A, \mu_1)$-bimodule $(M,l_1, r_1)$ ~(resp. $(A, \mu_2)$-bimodule $(M,l_2, r_2)$ ).

\begin{pro}
With the above notations $(C, \Delta, \Delta')$ is a compatible presimplicial vector space.
\end{pro}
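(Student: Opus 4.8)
The claim is that the triple $(C, \Delta, \Delta')$ — with $C_n(A,M) = M \otimes A^{\otimes n}$, the face maps $\Delta$ built from the $(A,\mu_1)$-bimodule structure $(M, l_1, r_1)$ and $\Delta'$ built from the $(A,\mu_2)$-bimodule structure $(M, l_2, r_2)$ — is a compatible presimplicial vector space. Two of the three defining conditions are already free: the simplicial identities $\partial_i \partial_j = \partial_{j-1}\partial_i$ for $\Delta$ hold because $(M, l_1, r_1)$ is a bimodule over $(A,\mu_1)$, and likewise for $\Delta'$ with $(M, l_2, r_2)$ over $(A,\mu_2)$; this is the content of the remark just preceding the proposition. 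So the only thing to verify is the mixed identity
\begin{align*}
\partial_i \partial_j' + \partial_i' \partial_j = \partial_{j-1}\partial_i' + \partial_{j-1}'\partial_i, \qquad 0 \le i < j \le n.
\end{align*}

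The plan is a direct but organized case analysis on the pair $(i,j)$, exactly mirroring how one proves the ordinary simplicial identities for the Hochschild complex, but now tracking which of the two products/actions appears in which slot. I would split into the cases: (a) $1 \le i < j \le n-1$, where both face maps act by multiplying interior tensor factors; (b) $i = 0 < j \le n-1$, where $\partial_0$ (resp. $\partial_0'$) uses the left action $l_1$ (resp. $l_2$) and $\partial_j$ an interior product; (c) $1 \le i < j = n$, the symmetric situation using the right actions $r_1, r_2$; (d) $i = 0$, $j = n$, the corner case mixing a left and a right action. In case (a), when $j > i+1$ the two products act on disjoint factors and everything commutes slot-by-slot, so the identity reduces to the plain compatibility "$x \cdot_1 y$ in one slot, $x \cdot_2 y$ in another" — trivially symmetric; the only substantive subcase is $j = i+1$, where on the left-hand side one gets terms of the form $(a_i \cdot_1 a_{i+1}) \cdot_2 a_{i+2}$ etc. and the equation becomes precisely the module compatibility (\ref{comp-bi2}) (or the algebra compatibility in $A$ when all three factors are in $A$). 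Cases (b) and (c) collapse, when the two face maps touch adjacent factors, to the compatibilities (\ref{comp-bi1}) and (\ref{comp-bi3}) respectively — reading (\ref{comp-bi1}) as the statement for $\partial_0 \partial_1'$ versus $\partial_0 \partial_0'$ acting on $m \otimes a_1 \otimes a_2 \otimes \cdots$, and (\ref{comp-bi3}) for the bottom corner $\partial_n \partial_{n-1}'$-type terms. Case (d) is handled by noting that when $n \ge 2$ the left slot and the right slot are disjoint, so $\partial_0$ and $\partial_n$ genuinely commute and the identity is again the bare symmetry $l_1/r_2$ versus $l_2/r_1$; for $n = 1$ one checks $C_1 \to C_0$ directly against the definition of $(C^c)_0$.

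The routine bookkeeping is the main obstacle here, not any conceptual difficulty: one must be careful that when $i$ and $j$ are not adjacent the two operations land on non-overlapping factors (so the compatibility needed is the trivial "two multiplications in different slots commute" observation, with the $\mu_1 \leftrightarrow \mu_2$ swap making the two sides equal), whereas when $j = i+1$ or when a face map hits the boundary slot $0$ or $n$, one is forced to invoke exactly one of the three module compatibility axioms (\ref{comp-bi1})–(\ref{comp-bi3}) — or, in the all-$A$ situation, the defining compatibility of the compatible associative algebra $A$. Once the mixed identity (\ref{presim-id}) is established, Proposition \ref{comp-pre-pro} applies and gives $\partial \partial' + \partial' \partial = 0$, so that $\{C^c, \partial^c\}$ of \eqref{boun1}–\eqref{boun2} is a genuine chain complex; its homology is then by definition the homology of the compatible associative algebra $A$ with coefficients in the compatible $A$-bimodule $M$. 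I would present the argument by writing out the action of each relevant composite on a generator $m \otimes a_1 \otimes \cdots \otimes a_n$ in the two or three genuinely different adjacency patterns and leaving the disjoint-slot patterns to the reader with the one-line remark that they commute up to the $\mu_1/\mu_2$ relabelling.
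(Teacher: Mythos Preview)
Your approach matches the paper's: a direct verification of the mixed presimplicial identity on a generic generator $m \otimes a_1 \otimes \cdots \otimes a_n$, by case analysis on the position of $(i,j)$. In fact the paper is far less detailed than you are --- it writes out only the non-adjacent interior case $0 < i < j-1$ (where no compatibility axiom is even needed) and then declares the remaining cases ``similar''.

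There is one genuine slip in your plan, though. In case (d), with $i=0$ and $j=n$, the two face maps do \emph{not} act on disjoint slots: both $\partial_0$ (the right action $m\mapsto m\cdot a_1$) and $\partial_n$ (the left action $m\mapsto a_n\cdot m$) hit the $M$-factor, so the composite lands on $(a_n \cdot_2 m)\cdot_1 a_1$, etc. The identity there is exactly the middle bimodule compatibility (\ref{comp-bi2}), not a triviality. Relatedly, your assignment of axioms to cases (b) and (c) is swapped: since $\partial_0$ uses the right action and $\partial_n$ the left action, the adjacent case $i=0,\,j=1$ needs (\ref{comp-bi3}) while $i=n-1,\,j=n$ needs (\ref{comp-bi1}); the interior adjacent case (a) with $j=i+1$ uses only the algebra compatibility in $A$, never (\ref{comp-bi2}). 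None of this breaks the argument --- once you actually write out the composites you will see the correct axiom appear --- but the bookkeeping in your outline should be corrected.
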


\begin{proof}
We only need to check the compatibility conditions (\ref{presim-id}). Let $0 \leq i < j \leq n$. In particular, if $0 < i < j-1$, then
\begin{align*}
&(\partial_i \partial_j' + \partial_i' \partial_j) ( m \otimes a_1 \otimes \cdots \otimes a_n ) \\
&= \partial_i ( m \otimes a_1 \otimes \cdots \otimes a_j \cdot_2 a_{j+1} \otimes \cdots \otimes a_n) ~+~ \partial_i' (m \otimes a_1 \otimes \cdots \otimes a_j \cdot_1 a_{j+1} \otimes \cdots \otimes a_n)  \\
&= m \otimes a_1 \otimes \cdots \otimes a_i \cdot_1 a_{i+1} \otimes \cdots \otimes \cdots \otimes a_j \cdot_2 a_{j+1} \otimes \cdots \otimes a_n \\
&\quad ~+~ m \otimes a_1 \otimes \cdots \otimes a_i \cdot_2 a_{i+1} \otimes \cdots \otimes \cdots \otimes a_j \cdot_1 a_{j+1} \otimes \cdots \otimes a_n \\
&= \partial_{j-1}' (m \otimes a_1 \otimes \cdots \otimes a_i \cdot_1 a_{i+1} \otimes \cdots \otimes a_n ) ~+~ \partial_{j-1} (m \otimes a_1 \otimes \cdots \otimes a_i \cdot_2 a_{i+1} \otimes \cdots \otimes a_n ) \\
&= (\partial_{j-1}' \partial_i + \partial_{j-1} \partial_i' ) ( m \otimes a_1 \otimes \cdots \otimes a_n ).
\end{align*}
In fact, for various choices of $i$ and $j$ satisfying $0 \leq i < j \leq n$, one can similarly show that $\partial_i \partial_j' + \partial_i' \partial_j = \partial_{j-1} \partial_i' + \partial_{j-1}' \partial_i$. This completes the proof.
\end{proof}

The above proposition suggests us to construct a chain complex associated to the compatible presimplicial vector space $(C, \Delta, \Delta')$.  More precisely, the $n$-th chain group $C_n^c (A, M)$, for $n \geq 0$, is given by
\begin{align*}
C_0^c (A, M) :=~& \{ m \in M ~|~ m= m' \cdot_1 a' - a' \cdot_1 m' =  m' \cdot_2 a' - a' \cdot_2 m', \text{ for some } m' \otimes a' \in M \otimes A \}, \\
C_n^c (A, M) :=~& \underbrace{ C_n (A, M) \oplus \cdots \oplus C_n (A, M)}_{n \text{ times}}, ~\text{ for } n \geq 1.
\end{align*}
The differential $\partial^c : C_n^c (A, M) \rightarrow C_{n-1}^c (A, M)$, for $n \geq 1$, is given by the formula (\ref{boun1}) and (\ref{boun2}), where $\partial$ (resp. $\partial'$) is the Hochschild boundary operator for the algebra $(A, \mu_1)$ with coefficients in the bimodule $(M, l_1, r_1)$ (resp. for the algebra $(A, \mu_2)$ with coefficients in the bimodule $(M, l_2, r_2)$). The corresponding homology groups are called the homology of the compatible associative algebra $A$ with coefficients in the compatible $A$-bimodule $M$, and they are denoted by $H_\ast^c (A,M).$.

\medskip

Let $(A, \mu_1, \mu_2)$ be an unital and commutative compatible associative algebra. We denote by $\Omega^1_{A | \mu_1 + \mu_2}$ the space generated by $\mathbb{K}$-linear symbols of the form $da$, for $a \in A$, subject to the relations
\begin{align*}
d ( a \cdot_1 b ) = a \cdot_1 db + da \cdot_1 b ~~ \text{ and } ~~ d ( a \cdot_2 b ) = a \cdot_2 db + da \cdot_2 b, \text{ for } a, b \in A.
\end{align*}
The space $\Omega^1_{A | \mu_1 + \mu_2}$ is a left module over the associative algebra $(A, \mu_1 + \mu_2)$. The elements of $\Omega^1_{A | \mu_1 + \mu_2}$ are called K\"{a}hler differentials.

\begin{pro}
Let $(A, \mu_1, \mu_2)$ be an unital and commutative compatible associative algebra. Then there is a canonical isomorphism $H_1^c (A,A) \cong \Omega^1_{A | \mu_1 + \mu_2}$ as $(A, \mu_1 + \mu_2)$-left module.
\end{pro}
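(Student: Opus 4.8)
The plan is to unwind the complex $\{C_\bullet^c(A,A),\partial^c\}$ in low degrees, identify $H_1^c(A,A)$ with an explicit quotient of $A\otimes A$, and then recognize that quotient as $\Omega^1_{A|\mu_1+\mu_2}$ by carrying out, for the two products simultaneously, the classical identification of first Hochschild homology with Kähler differentials. Concretely: commutativity of $\mu_1$ makes the Hochschild boundary $C_1(A,A)=A\otimes A\to C_0(A,A)=A$ attached to $\mu_1$ vanish (it sends $m\otimes a$ to $m\cdot_1 a-a\cdot_1 m=0$), and likewise for $\mu_2$, so by the definition of $C_0^c(A,A)$ we get $C_0^c(A,A)=0$; hence by $(\ref{boun1})$ the differential $C_1^c(A,A)\to C_0^c(A,A)$ is zero and the group of $1$-cycles is all of $C_1^c(A,A)=A\otimes A$. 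Since $C_2^c(A,A)=C_2(A,A)\oplus C_2(A,A)$ and, by $(\ref{boun2})$ at $n=2$, $\partial^c(x_1,x_2)=\partial x_1+\partial' x_2$ where $\partial,\partial'$ are the Hochschild boundary operators of $(A,\mu_1),(A,\mu_2)$ with coefficients in $A$, this yields
\begin{align*}
H_1^c(A,A)=\frac{A\otimes A}{\,\mathrm{im}\,\partial+\mathrm{im}\,\partial'\,},
\end{align*}
with $(A,\mu_1+\mu_2)$-module structure induced by the coefficient-slot action $c\cdot(m\otimes a)=(c\cdot_1 m+c\cdot_2 m)\otimes a$.

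Next I would set up the map $q\colon A\otimes A\to\Omega^1_{A|\mu_1+\mu_2}$, $q(m\otimes a)=m\,da$, where $m\,da$ denotes the module action of $m$ on the differential $da$. This $q$ is surjective because the elements $da=q(1\otimes a)$ generate $\Omega^1_{A|\mu_1+\mu_2}$, and it is $(A,\mu_1+\mu_2)$-linear for the coefficient-slot action. The essential computation is that $q$ kills $\mathrm{im}\,\partial$ and $\mathrm{im}\,\partial'$: using commutativity of $\mu_1$ to rewrite the last face term $b\cdot_1 m$ as $m\cdot_1 b$,
\begin{align*}
q\big(\partial(m\otimes a\otimes b)\big)=(m\cdot_1 a)\,db-m\,d(a\cdot_1 b)+(m\cdot_1 b)\,da,
\end{align*}
and this collapses to $0$ after substituting the defining relation $d(a\cdot_1 b)=a\cdot_1 db+da\cdot_1 b$ of $\Omega^1_{A|\mu_1+\mu_2}$ and using commutativity once more; the computation for $\partial'$ is the same with $\mu_2$. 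Thus $q$ descends to a surjection $\overline{q}\colon H_1^c(A,A)\to\Omega^1_{A|\mu_1+\mu_2}$. For injectivity I would construct an inverse by sending the generator $da$ to $[1\otimes a]$ and extending $(A,\mu_1+\mu_2)$-linearly; it is well defined because each defining relation of $\Omega^1_{A|\mu_1+\mu_2}$ maps to $0$, which — using the unit of $A$ — amounts to the identities $1\otimes(a\cdot_i b)-a\otimes b-b\otimes a=-\partial(1\otimes a\otimes b)$ (for $\mu_1$) and $=-\partial'(1\otimes a\otimes b)$ (for $\mu_2$), placing these elements in $\mathrm{im}\,\partial+\mathrm{im}\,\partial'$. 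Agreement on generators shows $\overline{q}$ and this inverse are mutually inverse $(A,\mu_1+\mu_2)$-module isomorphisms, and $[m\otimes a]\mapsto m\,da$ is manifestly canonical.

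The step I expect to be the real obstacle is keeping the three module structures consistent — in particular, that the coefficient-slot action of $(A,\mu_1+\mu_2)$ actually descends to $H_1^c(A,A)$, i.e. that $\mathrm{im}\,\partial+\mathrm{im}\,\partial'$ is stable under $c\cdot_1+c\cdot_2$ acting on the coefficient factor. One checks that $c\cdot_1$ commutes with $\partial$ and $c\cdot_2$ with $\partial'$ by the standard Hochschild computation (using commutativity), but $c\cdot_2$ is not a chain map for $\partial$, nor $c\cdot_1$ for $\partial'$, so one must show the mixed images $c\cdot_2(\mathrm{im}\,\partial)$ and $c\cdot_1(\mathrm{im}\,\partial')$ stay inside $\mathrm{im}\,\partial+\mathrm{im}\,\partial'$; this is precisely where the compatibility identity of $(A,\mu_1,\mu_2)$ and the bimodule compatibilities $(\ref{comp-bi1})$--$(\ref{comp-bi3})$ specialized to $M=A$, together with commutativity, must be used, and it is a finite but somewhat intricate rearrangement. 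The same care is needed to pin down which combination of the $\mu_1$- and $\mu_2$-module structures on $\Omega^1_{A|\mu_1+\mu_2}$ serves as $m\,da$ and to verify that $q$ and its inverse are module maps; apart from that bookkeeping, every step is a faithful transcription of the classical proof that $HH_1\cong\Omega^1$ for commutative unital algebras \cite{loday-book}.
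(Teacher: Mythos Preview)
Your approach is essentially the same as the paper's: identify $H_1^c(A,A)$ as a quotient of $A\otimes A$, define mutually inverse maps to and from $\Omega^1_{A|\mu_1+\mu_2}$ via $[a\otimes b]\leftrightarrow a\,db$, and check well-definedness using the Leibniz relations and commutativity. If anything you are more precise than the paper on two points: you correctly record the $1$-boundaries as $\mathrm{im}\,\partial+\mathrm{im}\,\partial'$ (since $\partial^c(x_1,x_2)=\partial x_1+\partial' x_2$ with $x_1,x_2$ ranging independently over $C_2(A,A)$), whereas the paper writes only the diagonal relation $\partial x+\partial' x=0$; and you flag the genuine verification that the $(A,\mu_1+\mu_2)$-action on the coefficient slot preserves $\mathrm{im}\,\partial+\mathrm{im}\,\partial'$, which the paper asserts without comment. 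One small caution on your inverse: with a common unit $1$ for both products one has $1\cdot_1 a + 1\cdot_2 a = 2a$, so ``send $da\mapsto[1\otimes a]$ and extend $(A,\mu_1+\mu_2)$-linearly'' will introduce a spurious factor of $2$ when composed with $q$; the clean fix is to define the inverse $\mathbb{K}$-linearly on the spanning set $\{da\}$ (as the paper implicitly does) and then verify module-linearity afterwards, exactly the bookkeeping you already anticipate in your final paragraph.
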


\begin{proof}
The first homology group $H_1^c (A,A)$ is the quotient of $A \otimes A$ by the relation
\begin{align*}
a \cdot_1 b \otimes c - a \otimes b \cdot_1 c + c \cdot_1 a \otimes b + a \cdot_2 b \otimes c - a \otimes b \cdot_2 c + c \cdot_2 a \otimes b = 0.
\end{align*}
First observe that the map $A \otimes H_1^c (A,A) \rightarrow H_1^c (A,A), ~ a \otimes [b \otimes c] \mapsto a \cdot_1 b \otimes c + a \cdot_2 b \otimes c$, is a well-defined left $(A, \mu_1 + \mu_2)$-module structure on $H_1^c (A,A)$.

Next, we define a map $H^c_1 (A, A) \rightarrow \Omega^1_{A | \mu_1 + \mu_2}$ by $[a \otimes b] \mapsto a \cdot_1 db + a \cdot_2 db$. It is easy to verify that this map is well-defined and invertible. The inverse map is given by $(a \cdot_1 db + a \cdot_2 db ) \mapsto [a \otimes b]$. One can see that $a \otimes b$ is a $1$-cycle as both $\mu_1$ and $\mu_2$ are commutative. Hence the proof.
\end{proof}

\begin{rmk}
Our method of constructing the homology of compatible associative algebras can be easily generalized to compatible Lie algebras. Moreover, the standard skew-symmetrization leads to a morphism from the homology of a compatible associative algebra to the homology of the corresponding compatible Lie algebra. In forthcoming papers, we will come back with more properties of (co)homology of compatible associative algebras.
\end{rmk}

\section{Further discussions}\label{section:further}

In this paper, we introduce (co)homology of a compatible associative algebra $A = (A, \mu_1, \mu_2)$ with coefficients in a compatible $A$-bimodule. As applications of cohomology, we study extensions and deformations of $A$. Note that our (co)homology is not the same or combination of the Hochschild (co)homologies of $(A, \mu_1)$ and $(A, \mu_2)$. Here we collect some further questions regarding this new (co)homology theory.

\medskip

(I) {\bf Gerstenhaber structure on the cohomology.} In Theorem \ref{thm-gla-coho}, we show that the shifted space $H^{\ast +1}_c (A, A)$ of the cohomology of a compatible associative algebra $A$ with coefficients in itself carries a graded Lie bracket. One may now ask the following question: Is there any associative cup-product $\smile$ on the cohomology $H^{\ast}_c (A,A)$ which together with the graded Lie bracket makes $H^{\ast}_c (A,A)$ into a Gerstenhaber algebra? In \cite{das-gers}, we find an affirmative answer to this question in a more general context. Specifically, given a nonsymmetric operad with two compatible multiplications, we first construct a new cohomology generalizing our cohomology of compatible associative algebras. Then we show that this new cohomology can be seen as the induced cohomology of another multiplicative nonsymmetric operad. Hence by a result of Gerstenhaber and Voronov \cite{gers-voro}, the cohomology carries a Gerstenhaber structure. In particular, we explicitly write down the cup-product on the cohomology $H^{\ast}_c (A,A)$.

\medskip

(II) {\bf Compatible Rota-Baxter operators and compatible dendriform algebras.} Dendriform algebras was introduced in \cite{loday-di} as Koszul dual of associative dialgebras. They are certain splitting of associative algebras and arise naturally from shuffle algebras, planar binary trees and Rota-Baxter operators. Note that Rota-Baxter operators are a noncommutative analogue of Poisson structures \cite{uchino}. Motivated from the study of compatible Poisson structures in geometry, in a forthcoming paper \cite{das-guo}, we study compatible dendriform algebras and compatible Rota-Baxter operators from cohomological points of view and find relations with the results of the present paper.

\medskip

(III)  {\bf Cyclic (co)homology of compatible associative algebras.} The notion of cyclic (co)homology of an associative algebra generalizes the de Rham (co)homology of manifolds. Note that the cyclic (co)chain complexes (also called Connes complexes) are obtained from Hochschild (co)chain complexes modulo the actions of cyclic groups. In a future project, we aim to explore the cyclic (co)homology theory of compatible associative algebras and find its differential geometric significance.

\medskip

\noindent {\bf Acknowledgements.} The research of A. Das is supported by the postdoctoral fellowship of Indian Institute of Technology (IIT) Kanpur.


\end{document}